\newcommand{\nl}{\par \bigskip \noindent}
\newcommand\diag{{\sf diag}}
\newcommand\Det{{\sf det}}
\newcommand\soc{{\sf soc}}
\newcommand\Aut{{\sf Aut}} 
\newcommand\Sym{{\sf Sym}}
\newcommand\GF{{\sf GF}}
\newcommand\POmega{{\sf P\Omega}}
\newcommand\PSp{{\sf PSp}}
\newcommand\GammaL{{\sf \Gamma L}}
\newcommand\AGammaL{{\sf A\Gamma L}}
\newcommand\PGammaL{{\sf P\Gamma L}} 
\newcommand\PSigmaL{{\sf P\Sigma L}} 
\newcommand\PSL{{\sf PSL}}
\newcommand\PGL{{\sf PGL}}
\newcommand\GL{{\sf GL}}
\newcommand\SL{{\sf SL}}
\newcommand\AGL{{\sf AGL}}
\newcommand\ASL{{\sf ASL}}
\newcommand\PSU{{\sf PSU}} 
\newcommand\PSigmaU{{\sf P\Sigma U}}
\newcommand\PGammaU{{\sf P\Gamma U}}
\newcommand\PGU{{\sf PGU}}
\newcommand\PSO{{\sf PSO}}
\newcommand\Ree{{\sf Ree}}
\newcommand\Sz{{\sf Sz}} 
\newcommand\Co{{\sf Co}} 
\newcommand\HS{{\sf HS}} 
\newcommand\ord{{\sf ord}} 
\newcommand\lcm{{\sf lcm}} 
\newcommand\la{\langle}
\newcommand\ra{\rangle}
\newcommand\calB{{\mathcal B}}
\newcommand\bfZ{{\bf Z}}
\newtheorem{theorem}{Theorem}[section]%
\newtheorem{lemma}[theorem]{Lemma}%
\newtheorem{prop}[theorem]{Proposition}%
\newtheorem{corollary}[theorem]{Corollary}%
\newtheorem{hypothesis}[theorem]{Hypothesis}%
\newtheorem{remark}[theorem]{Remark}%
\newtheorem{example}[theorem]{Example}%
\begin{document}

\title[On imprimitive rank 3 permutation groups] 
{On imprimitive rank 3 permutation groups}

\author[Devillers et. al.]{Alice Devillers,
 Michael Giudici, Cai Heng Li, Geoffrey Pearce and Cheryl E.~Praeger}
\address{ Centre for the Mathematics of Symmetry and Computation\\
 School of Mathematics and Statistics\\
 The University of Western Australia\\
 35 Stirling Highway, Perth WA 6009, Australia} 
\email{alice.devillers@uwa.edu.au, michael.giudici@uwa.edu.au, cai.heng.li@uwa.edu.au, geoffrey.pearce@graduate.uwa.edu.au, cheryl.praeger@uwa.edu.au} 
\thanks{The research for this paper was supported by Australian Research Council Discovery Grant DP0770915 and Federation Fellowship Grant FF0776186. A large portion of Section 4 forms part of the PhD thesis of the fourth author, which was supported by an Australian Postgraduate Award.}
\subjclass[2010]{Primary 20B05}

\maketitle
\begin{abstract}
A classification is given of rank $3$ group actions which are quasiprimitive
but not primitive. There are two infinite families and a finite number
of individual imprimitive examples. When combined with earlier work of Bannai, Kantor, Liebler,
Liebeck and Saxl, this yields a classification of all quasiprimitive rank
3 permutation groups. Our classification is achieved by
first classifying imprimitive almost simple permutation groups which
induce a $2$-transitive action on a block system and for which a block
stabiliser acts $2$-transitively on the block. We also determine those
imprimitive rank $3$ permutation groups $G$ such that the induced action on
a block is almost simple and $G$ does not contain the full socle of the
natural wreath product in which $G$ embeds.
\end{abstract}

\section{Introduction} \label{intro}

The study of rank 3 permutation groups, that is, those with exactly $3$ orbits on ordered pairs
of points goes back to the paper \cite{Higman} of Donald Higman. 
 Rank $3$  groups can be either primitive or imprimitive; however only the primitive rank $3$ groups have so far been fully classified (in \cite{Bannai},\cite{KanLieb},\cite{Liebeck}, and \cite{LiebeckSaxl}).  
The major aim of this paper is to extend this classification to include all quasiprimitive rank $3$ permutation groups (see Theorem \ref{mainthm} and Corollary \ref{cor}).
A variety of structures preserved by rank $3$ groups have been studied. Those admitting primitive rank 3 groups include symmetric designs \cite{Dempwolff01, Dempwolff04, Dempwolff06}, partial linear spaces \cite{Devillers, Devillers05}, and transitive graph decompositions \cite{gridpaper}, while those admitting imprimitive rank 3 groups as an automorphism group include  Latin square designs \cite{Devillers06} (equivalent to partial linear spaces  with 3 blocks of imprimitivity) and transitive graph decompositions \cite{Geoff} (for which Theorem \ref{ASimprimgroupthm} below is required).
As well as contributing to the classification of imprimitive rank 3
permutation groups, this paper answers a question in \cite{LDT} about complete multipartite graphs (see Remark
\ref{Knm}), and identifies and fills a gap in the proof in \cite{GodPra} of the
classification of antipodal distance transitive covers of complete graphs
(see Remark \ref{hole}).

Let $G$ be a transitive imprimitive permutation group acting on a set $\Omega$, and suppose that $G$ preserves a non-trivial block-system ${\mathcal B}$ on $\Omega$.  By the `Embedding Theorem' for imprimitive groups \cite{BMMN} we may choose a block $B \in {\mathcal B}$ and identify $\Omega$ with the set $B \times \{1,\ldots,n\}$ where $n = |{\mathcal B}|$, and then view $G$ as a subgroup of the wreath product $H \wr X$ where $G^{\mathcal B} \cong X \leq S_n$ and $H := G_B^B$, the {\em component} of $G$. 
The partition $\calB$ can be identified with $\{B\times\{i\}|i\in\{1,\ldots,n\}\}$.

 In this paper we are interested in such groups satisfying the following hypothesis.

\begin{hypothesis} \label{mainhypo}
Let $H \leq \Sym(B)$ and $X \leq S_n$, and let $G$ be a subgroup of $H \wr X$ acting imprimitively on $\Omega = B \times \{1,\ldots,n\}$, such that $G$ projects onto $X$, and $H$ is the component of $G$.  Assume that $H$ and $X$ are both $2$-transitive.
\end{hypothesis}

As we show in Lemma \ref{cc}, all imprimitive groups $G$ with a rank $3$ action on $\Omega$  satisfy Hypothesis \ref{mainhypo}.
In this paper, we  describe groups $G$ satisfying Hypothesis \ref{mainhypo} in terms of the subgroup $G \cap H^n$; that is, the kernel of the $G$-action on ${\mathcal B}$.  Analysing the groups in this way gives a characterisation in which the groups are either  very `large', or `small' and can be described in great detail.

Burnside \cite[Section 154]{Burnside} showed that a $2$-transitive permutation group $H$ has a unique minimal normal subgroup $T$ which is either a non-abelian simple group, in which case $H$ is called {\em almost simple}; or elementary abelian, in which case $H$ is called {\em affine}. Let $G$ and $H$ be as in  Hypothesis \ref{mainhypo} and let $T$ be the unique minimal normal subgroup of $H$. If $G$ has  trivial intersection with $H^n$ (so that $G$ acts faithfully on ${\mathcal B}$) we say that $G$ is {\em block-faithful with respect to ${\mathcal B}$}. If we assume that $G$ has rank 3, then Lemma \ref{uniqueblocksystem} below implies that ${\mathcal B}$ is unique, so in that case we say that $G$ is {\em block-faithful} if it is block-faithful with respect to that unique block-system. Moreover, Corollary \ref{qp} implies that $G$ is \emph{quasiprimitive} on $\Omega$, that is, all nontrivial normal subgroups of $G$ are transitive on $\Omega$.

Our first theorem gives a characterisation of rank $3$ groups when $T$ is a non-abelian simple group.
\begin{theorem} \label{ASimprimgroupthm}
Suppose that $G$ satisfies Hypothesis {\rm \ref{mainhypo}} and that $T := \soc(H)$ is a non-abelian simple group. Then $G$ has rank $3$ on $\Omega$ if and only if one of the following holds:
\begin{enumerate}
\item $T^n\leqslant G$,
\item $G$ is quasiprimitive and rank $3$ on $\Omega$,
\item $n=2$ and $G=M_{10},\PGL(2,9)$ or $\Aut(A_6)$ acting on $12$ points, or
\item $n=2$ and $G=\Aut(M_{12})$ acting on $24$ points.
\end{enumerate}
\end{theorem}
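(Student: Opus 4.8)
The plan is to compute the rank of $G$ by analysing the kernel $K:=G\cap H^n$ of the action on $\calB$, and I would begin by recording the standard reduction of the rank-$3$ property to a single transitivity condition. Fix $\omega=(b,1)$ in the block $B_1:=B\times\{1\}$. The stabiliser $G_\omega$ fixes $\omega$, is transitive on $B_1\setminus\{\omega\}$ (since $G_{B_1}$ projects onto the $2$-transitive group $H$, so $G_\omega$ projects onto $H_b$), and permutes the other blocks transitively (since $X$ is $2$-transitive). Hence $G$ has rank $3$ on $\Omega$ if and only if $G_\omega$ is transitive on $B\times\{2,\dots,n\}$, equivalently $R:=\pi_{12}(G_{B_1,B_2})$ is transitive on $B\times B$, where $\pi_{12}$ is the projection to the first two block-components.

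Next I would use that $K\trianglelefteq G$ and that each projection $\pi_i(K)$ is normal in $H$. As $H$ is almost simple with socle $T$, such a projection is trivial or contains $T$; and since $G$ is transitive on the $n$ coordinates these projections are $G$-conjugate, so they are simultaneously trivial or all contain $T$. If $K=1$ then $G$ is block-faithful, hence quasiprimitive by Corollary \ref{qp}, giving case (2). Otherwise $T^n=\soc(H^n)$ is normalised by $G$, so $D:=G\cap T^n\trianglelefteq G$, and the linking (``strip'') partition of $\{1,\dots,n\}$ determined by $K\cap T^n$ is $G$-invariant, hence $X$-invariant; as $X$ is $2$-transitive this partition is trivial. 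Either every part is a singleton, forcing $T^n\le K\le G$ (case (1)), or there is a single part, the single-strip case, in which by the structure of subdirect products of the nonabelian simple group $T$ the subgroup $D\cong T$ is a full diagonal. For case (1) the ``if'' direction is direct: $(1,t,1,\dots,1)\in K_\omega$ for all $t\in T$, so $\pi_2(K_\omega)\supseteq T$ is transitive on $B_2$, whence $G$ has rank $3$.

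The heart of the matter is the single-strip case, where $D=\{(\alpha_1(t),\dots,\alpha_n(t)):t\in T\}$ for some $\alpha_i\in\Aut(T)$ and the $D$-orbits are the blocks, so $G$ is not quasiprimitive and $T^n\not\le G$; thus only (3),(4) can occur. Writing $\phi:=\alpha_2\alpha_1^{-1}$, I would first show $R\cap(T\times T)=\{(s,\phi(s)):s\in T\}$: the inclusion $\supseteq$ follows from $D\le G_{B_1,B_2}$, and for $\subseteq$ one takes $g\in G_{B_1,B_2}$ with first two block-components $t_1,t_2\in T$, conjugates a generic element of $D$ by $g$, and compares the first two coordinates; since $D\trianglelefteq G$ this conjugation induces an automorphism of $D$, forcing $c_{t_2}=\phi\, c_{t_1}\,\phi^{-1}=c_{\phi(t_1)}$ in $\Aut(T)$ and hence $t_2=\phi(t_1)$ as $T$ is centreless. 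Goursat's lemma together with $C_H(T)=1$ then forces $R$ to be the graph of an isomorphism $\theta$ of subgroups of $H$ containing $T$ with $\theta|_T=\phi$. Consequently $R$ is transitive on $B\times B$ if and only if the twisted $T$-action $t\mapsto\phi(t)$ on $B$ is inequivalent to the given one with orthogonal nontrivial constituent, equivalently the $2$-transitive permutation characters $\chi,\chi^\phi$ of $T$ on $B$ satisfy $\la\chi,\chi^\phi\ra=1$.

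Finally I would pin down $n$ and $T$. Because $X$ is $2$-transitive on blocks, every ordered pair $(B_i,B_j)$ is $G$-equivalent to $(B_1,B_2)$, so rank $3$ forces the $T$-actions on the $n$ blocks to be pairwise inequivalent with pairwise orthogonal constituents. By the classification of finite $2$-transitive groups (hence CFSG), the only almost simple $T$ possessing two inequivalent $2$-transitive actions of the same degree with orthogonal constituents, fused by an automorphism, are $A_6$ (degree $6$) and $M_{12}$ (degree $12$), and in each case there are exactly two such actions; the pigeonhole principle then gives $n\le 2$, so $n=2$, with $|B|=6$, $T=A_6$ or $|B|=12$, $T=M_{12}$, and $\phi$ the relevant exceptional outer automorphism. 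It remains to enumerate the admissible almost simple overgroups $H$ of $T$ on $B$ together with a block-swapping element, which yields exactly $G\in\{M_{10},\PGL(2,9),\Aut(A_6)\}$ on $12$ points and $G=\Aut(M_{12})$ on $24$ points; each of these is then readily checked to have rank $3$, completing the ``if'' direction for (3),(4). I expect the main obstacle to be precisely this identification, together with the CFSG-based claim isolating $A_6$ and $M_{12}$: one must carefully distinguish genuine orthogonality of constituents (as for $A_6$ and $M_{12}$) from the superficially similar but non-rank-$3$ duality actions such as points versus hyperplanes of $\PSL(d,q)$, whose nontrivial constituents coincide.
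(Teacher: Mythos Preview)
Your strategy matches the paper's at the top level---analyse $K=G\cap H^n$, separate block-faithful (case~(2)) from $K\neq 1$, and in the latter split into $T^n\leq G$ (case~(1)) versus a diagonal configuration pinned down via CFSG---but the technical machinery differs. The paper obtains the full/diagonal dichotomy by a minimal-support-and-commutator argument (Lemma~\ref{support}), then bounds $n$ and isolates $A_6$, $M_{12}$ using an elementary half-transitivity observation (Lemma~\ref{jhalftrans}): for normal $K\leq H^n$ in $G$, each $\pi_j(K_{\alpha_i})$ is half-transitive on $B_j$. This forces (Lemma~\ref{geq3}) three $\Aut(T)$-fused but pairwise non-$T$-conjugate point-stabiliser classes when $n\geq 3$, contradicting Cameron's tables; for $n=2$ it gives directly that $H_\alpha^\varphi$ must be transitive on $B$ (Lemma~\ref{only2meanstrans}), and a case check leaves only $A_6$, $M_{12}$. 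Your route via Scott's lemma on $D=G\cap T^n$ together with Goursat is a clean alternative; note however that before invoking the strip partition you should check that $D$ is nontrivial and subdirect in $T^n$ (easy: if $D=1$ then $K$ embeds in the soluble $(H/T)^n$, contradicting $\pi_i(K)\geq T$; and $\pi_i(D)\unlhd \pi_i(K)$ with $\pi_i(D)\leq T$ simple forces $\pi_i(D)=T$). More substantively, your claimed equivalence ``$R$ transitive $\Longleftrightarrow\la\chi,\chi^\phi\ra_T=1$'' is only immediate in the direction $\Leftarrow$: from $R$ transitive you deduce that $\theta((L')_\alpha)$ is transitive on $B$, whence $T_\alpha^\phi\unlhd\theta((L')_\alpha)$ is \emph{half}-transitive---this is the paper's lemma in disguise and already suffices for pairwise inequivalence of the $n$ actions (hence $n\leq 2$ by Cameron), but not a priori for orthogonality of constituents. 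For $n=2$ one has $L'=H$ (since $\hat G=G_{B_1}$ is subdirect in $H^2$), so the genuine criterion is the $H$-level one; your $T$-level check happens to agree in every candidate case (in each non-$A_6$, non-$M_{12}$ possibility either $H=T$, or, as for $\PSL(d,q)$, the twisted stabiliser visibly preserves a proper nonempty subset of $B$), but this coincidence should be verified rather than asserted. Finally, the passage from ``$T\in\{A_6,M_{12}\}$, $n=2$'' to the explicit list of groups $G$ uses $C_G(\hat G)=1$ (from uniqueness of the block system, Lemma~\ref{uniqueblocksystem}) to embed $G$ in $\Aut(T)$; you should make this step explicit.
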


Next we classify all quasiprimitive rank 3 permutation groups that are imprimitive.

\begin{theorem} \label{mainthm}
Let $G$ be a transitive imprimitive permutation group of rank $3$ acting on a set $\Omega$. Let $n$ be the number of blocks and $m$ be the size of the blocks.
Then $G$ is quasiprimitive if and only if $G$, $n$, $m$ and $G_B^B$ are as in one of the lines of Table~$\ref{bigtable3}$.
\end{theorem}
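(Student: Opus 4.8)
The plan is to organise the whole argument around one reduction: for an imprimitive rank $3$ group, quasiprimitivity is equivalent to block-faithfulness, after which the almost simple case is handed to Theorem \ref{ASimprimgroupthm} and the affine case is treated in parallel. First I would record that by Lemma \ref{cc} every imprimitive rank $3$ group $G$ satisfies Hypothesis \ref{mainhypo}, so $H = G_B^B$ and $X = G^{\mathcal B}$ are both $2$-transitive, with $m = |B| \geq 2$ and $n = |{\mathcal B}| \geq 2$; by Lemma \ref{uniqueblocksystem} the block-system ${\mathcal B}$ is moreover unique. Now let $K = G \cap H^n$ be the kernel of the action of $G$ on ${\mathcal B}$. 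Since $n \geq 2$, the group $K$ fixes every block setwise and is therefore intransitive on $\Omega$; as $K \trianglelefteq G$, quasiprimitivity forces $K = 1$, i.e.\ $G$ is block-faithful. Conversely Corollary \ref{qp} shows that a block-faithful rank $3$ group is quasiprimitive, so the quasiprimitive members of the class are exactly the block-faithful ones. By Burnside's theorem the minimal normal subgroup $T = \soc(H)$ of the $2$-transitive group $H$ is either non-abelian simple or elementary abelian, and I would split the analysis accordingly.

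In the almost simple case, where $T$ is non-abelian simple, I would invoke Theorem \ref{ASimprimgroupthm}, which lists four possibilities for a rank $3$ group, and then discard those incompatible with block-faithfulness. In case (1) we have $T^n \leq G \cap H^n = K$, forcing $K \neq 1$; and in cases (3) and (4), where $n = 2$, the stabiliser of a block is a normal subgroup of index $2$ fixing both blocks, hence intransitive, so again $K \neq 1$. Thus the quasiprimitive almost simple examples are precisely those in case (2), and it remains to enumerate them. Since $G$ is block-faithful it is a $2$-transitive group of degree $n$, so I would run through the classification of finite $2$-transitive groups (which rests on CFSG), examining for each candidate the block stabiliser $G_B$, its induced $2$-transitive action $H$ of degree $m$, and when the action on $\Omega$ has rank $3$. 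Here the rank $3$ condition simplifies to a single transitivity requirement: the orbits of a point stabiliser $G_{(\beta,i)}$ are $\{(\beta,i)\}$ and $B_i \setminus \{(\beta,i)\}$ (one orbit, as $H$ is $2$-transitive) together with the $m(n-1)$ external points, so rank $3$ holds if and only if $G_{(\beta,i)}$ is transitive on $\Omega \setminus B_i$. This I would verify family by family via subdegrees, recording the survivors in Table \ref{bigtable3}.

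The affine case, where $\soc(H)$ is elementary abelian of order $m = p^d$ and $H \leq \AGL(d,p)$, lies outside the scope of Theorem \ref{ASimprimgroupthm} and needs an independent treatment. Block-faithfulness again makes $G$ a $2$-transitive group of degree $n$, so I would once more pass through the CFSG list of $2$-transitive groups of degree $n$, pair each with the affine $2$-transitive possibilities for the component $H$, and test the same transitivity condition of $G_{(\beta,i)}$ on $\Omega \setminus B_i$; the survivors again enter Table \ref{bigtable3}.

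For the converse direction I would check line by line that each group listed in the table is indeed imprimitive, block-faithful (equivalently quasiprimitive) and of rank $3$, a direct computation of the subdegrees in each case. I expect the main obstacle to be the affine analysis together with the rank $3$ verifications: deciding transitivity of $G_{(\beta,i)}$ on the $m(n-1)$ external points across the infinite families and sporadic $2$-transitive groups is where the genuine case-by-case work concentrates, whereas the reduction to block-faithfulness and the elimination of the non-quasiprimitive almost simple cases are essentially formal.
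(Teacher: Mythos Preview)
Your reduction to block-faithfulness via Corollary \ref{qp} matches the paper exactly, and your ultimate engine---run through the $2$-transitive groups $G$ of degree $n$, examine the $2$-transitive actions of $G_B$, and test the rank $3$ condition---is precisely what the paper carries out in Section \ref{trivkerthmproof}. The organisational choice that differs is your split on the type of $\soc(H)$ with $H = G_B^B$. The paper does not make this split; instead it invokes Corollary \ref{AS} to conclude that a block-faithful rank $3$ group $G \cong G^{\mathcal B}$ is necessarily \emph{almost simple} as a $2$-transitive group on $\mathcal B$ (the affine case being excluded because a regular abelian normal subgroup would yield a second block system, contradicting Lemma \ref{uniqueblocksystem}). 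This buys a single pass through the almost simple list of Theorem \ref{2transgroupsAS} rather than two, and in each Proposition all $2$-transitive actions of $G_B$---affine or not---are handled uniformly using the criteria of Lemma \ref{properties}. Your detour through Theorem \ref{ASimprimgroupthm} is logically harmless but contributes nothing: its case (2) is literally ``$G$ is quasiprimitive and rank $3$'', so after establishing block-faithfulness you are simply returned to the enumeration you were going to do anyway; and separating the affine-$H$ branch as requiring ``independent treatment'' duplicates work that the paper absorbs into the same Propositions. In short, your plan is sound and reaches the same destination, but the paper's route via Corollary \ref{AS} is shorter and yields Theorem \ref{trivkerthm} as a by-product.
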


In conjunction with \cite{Bannai,KanLieb,Liebeck,LiebeckSaxl}, Theorem \ref{mainthm}  gives a classification of all quasiprimitive rank 3 groups. 
\begin{corollary}\label{cor}
All quasiprimitive permutation groups of rank $3$ are known. They are either primitive (and classified in \cite{Bannai,KanLieb,Liebeck,LiebeckSaxl}) or imprimitive and almost simple (and classified in Theorem \ref{mainthm}).
\end{corollary}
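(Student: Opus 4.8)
The plan is to combine the two already-available classifications. Since every transitive group is either primitive or imprimitive, and primitivity implies quasiprimitivity, I would split a quasiprimitive rank $3$ group $G$ into these two cases. If $G$ is primitive I would simply quote the classification of primitive rank $3$ groups from \cite{Bannai,KanLieb,Liebeck,LiebeckSaxl}; this disposes of one half. It then remains to show that every \emph{imprimitive} quasiprimitive rank $3$ group is almost simple, after which Theorem \ref{mainthm} and Table \ref{bigtable3} supply the explicit list.

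For the imprimitive case, let $G$ be quasiprimitive and imprimitive of rank $3$ on $\Omega$, preserving a nontrivial block-system ${\mathcal B}$ with $n = |{\mathcal B}|$ blocks of size $m$, so $n, m \geq 2$. By Lemma \ref{cc}, $G$ satisfies Hypothesis \ref{mainhypo}, and in particular $X = G^{\mathcal B}$ is $2$-transitive. The first step is to show $G$ is block-faithful: the kernel $K = G \cap H^n$ of the action of $G$ on ${\mathcal B}$ is a normal subgroup fixing every block setwise, hence intransitive on $\Omega$ because $n \geq 2$. Quasiprimitivity therefore forces $K = 1$, so $G \cong X$ acts faithfully --- and $2$-transitively --- on the $n$ blocks (this is the converse of Corollary \ref{qp}).

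The second step identifies $\soc(G)$. As $G$ now acts $2$-transitively on ${\mathcal B}$, Burnside's theorem gives that $\soc(G)$ is the unique minimal normal subgroup and is either elementary abelian or non-abelian simple. I would rule out the elementary abelian case by a counting argument: such a socle is regular on ${\mathcal B}$, so $|\soc(G)| = n$, whereas transitivity on $\Omega$ would require $|\soc(G)| \geq |\Omega| = mn > n$, contradicting quasiprimitivity. (In passing this also excludes $n = 2$, where the only $2$-transitive group is $S_2$.) Hence $\soc(G)$ is non-abelian simple and $G$ is almost simple, so $G$ appears in Table \ref{bigtable3} by Theorem \ref{mainthm}.

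The genuine weight of the classification lies upstream, in Theorem \ref{mainthm} and in the cited primitive classification, both of which I am treating as black boxes here; within the corollary itself the only point requiring care is the almost-simple reduction, and that reduces --- as above --- to Lemma \ref{cc}, Burnside's dichotomy, and a comparison of orders.
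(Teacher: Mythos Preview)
Your proof is correct, and the overall structure matches the paper's: split into primitive versus imprimitive, cite the primitive classification, and in the imprimitive case show that quasiprimitivity forces block-faithfulness and then almost simplicity before invoking Theorem~\ref{mainthm}. The paper does not write out a separate proof of the corollary; it relies on Corollary~\ref{qp} (quasiprimitive $\Leftrightarrow$ block-faithful) together with Corollary~\ref{AS} (block-faithful rank~$3$ $\Rightarrow$ almost simple), and you have essentially reproduced both of these.

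The one genuine difference is in how you exclude the affine case for $G^{\mathcal B}$. The paper's Corollary~\ref{AS} argues that an abelian regular normal subgroup $N\trianglelefteq G^{\mathcal B}$ would produce a second nontrivial block system on $\Omega$ (the $N$-orbits), contradicting the uniqueness established in Lemma~\ref{uniqueblocksystem}. You instead use an order count: the socle, being regular on $\mathcal B$, has order $n$, yet quasiprimitivity forces it to be transitive on $\Omega$ and hence of order at least $mn>n$. Both arguments are short and valid; yours is arguably cleaner here since it bypasses Lemma~\ref{uniqueblocksystem} entirely and uses only the quasiprimitivity hypothesis directly, while the paper's argument has the advantage of working under the weaker hypothesis ``block-faithful'' without assuming quasiprimitivity.
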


After Theorems \ref{ASimprimgroupthm} and Theorem \ref{mainthm}, the remaining case for a complete classification of the imprimitive rank 3 permutation groups is where $H$ is a 2-transitive affine group. A discussion of this case is given in Section \ref{sec:affine}.

As a tool to prove Theorem \ref{mainthm}, we also classify all almost simple block-faithful groups $G$ satisfying Hypothesis \ref{mainhypo}. We restrict ourselves to almost simple groups as Corollary \ref{AS} implies that a block-faithful rank 3 group is almost simple. 

\begin{theorem} \label{trivkerthm}
A group $G$ which satisfies Hypothesis {\rm \ref{mainhypo}}, is almost simple and block-faithful with respect to $\mathcal{B}$, if and only if  $G$, $n$, $|B|$ are as in one of the lines of Tables~$\ref{bigtable1}$ or $\ref{bigtable2}$. 
\end{theorem}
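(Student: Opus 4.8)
The plan is to reduce the classification to a question about the 2-transitive actions of point stabilisers, and then to run through the CFSG-based list of finite 2-transitive almost simple groups. First I would set up the correspondence. If $G$ satisfies Hypothesis \ref{mainhypo}, is almost simple, and is block-faithful, then $G\cap H^n=1$, so the projection $G\to X$ is an isomorphism; hence $G\cong X$ acts faithfully and 2-transitively on the set $\mathcal B$ of $n$ blocks, and $(G,\mathcal B)$ is an almost simple 2-transitive group. Writing $L=G_B$ for a block stabiliser, $L$ is a point stabiliser in this action, so $[G:L]=n$ and $\mathrm{Core}_G(L)=1$. The component $H=L^B$ is 2-transitive of degree $m=|B|$, and if $M=G_{(b,B)}$ is a point stabiliser of $G$ on $\Omega$ then $M<L$, $[L:M]=m$, and $L$ acts 2-transitively on $L/M\cong B$. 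Conversely, given almost simple $G$ with $M<L<G$ such that $G$ is 2-transitive on $G/L$ and $L$ is 2-transitive on $L/M$ (forcing $n,m\geq 2$), the action of $G$ on $G/M$ satisfies Hypothesis \ref{mainhypo}, is block-faithful since $\mathrm{Core}_G(M)\leq \mathrm{Core}_G(L)=1$, and is almost simple. Thus the theorem is equivalent to the enumeration, up to equivalence, of all chains $M<L<G$ enjoying these two 2-transitivity properties.

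Next I would invoke the classification of finite 2-transitive almost simple groups to list all candidates for $(G,\mathcal B)$ together with the permutation-group isomorphism type of the point stabiliser $L$. This produces the infinite families with socle $A_c$ (where $L\cong A_{c-1}$ or $S_{c-1}$) and $\PSL(d,q)$ (where $L$ is a maximal parabolic $P_1$), the remaining Lie-type families ($\PSU(3,q)$, $\Sz(q)$, $\Ree(q)$, $\PSp(2d,2)$), and the finitely many sporadic candidates (with socle $M_{11},\dots,M_{24}$, $\HS$, $\Co_3$, $\PSL(2,11)$, $A_7$, and the like).

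For each such $(G,L)$ I would then determine every 2-transitive action of the stabiliser $L$, i.e.\ every subgroup $M<L$ of index $m\geq 2$ on which $L$ acts 2-transitively. For the alternating/symmetric socle this yields the natural degree-$(c-1)$ action of $A_{c-1}$ or $S_{c-1}$, giving the infinite ``ordered pairs'' family in which $\Omega$ is the set of ordered pairs of distinct points and each block consists of the pairs with a fixed first coordinate, together with a few exceptional 2-transitive actions of $A_5,A_6,A_7,A_8$. For the linear socle the relevant actions of $P_1$ are those on the $\frac{q^{d-1}-1}{q-1}$ subspaces through the fixed point (the ``flag'' family on incident point–line pairs), again with small coincidental exceptions; and for each remaining Lie-type and sporadic candidate one reads the 2-transitive actions of $L$ from its subgroup structure. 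Collecting the surviving triples $(G,n,m)$ and discarding equivalent ones yields Tables \ref{bigtable1} and \ref{bigtable2}, while the converse direction is the routine verification that each tabulated triple genuinely arises from such a chain.

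The main obstacle is this last step: for every almost simple 2-transitive group one must pin down the complete set of 2-transitive representations of its point stabiliser. These stabilisers are structurally diverse --- parabolic subgroups with their unipotent radicals and Levi factors, near-simple groups, and the idiosyncratic maximal subgroups arising in the sporadic cases --- so deciding exactly which admit a further 2-transitive action, and excluding spurious extra ones, demands a careful case analysis drawing on maximal-subgroup data and on the classification of 2-transitive groups applied one level down. Particular care is needed where $L$ is not almost simple, so that the component $H$ may be an affine 2-transitive group, and where isomorphisms among small groups generate additional exceptional lines.
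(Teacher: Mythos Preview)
Your proposal is correct and follows essentially the same route as the paper: reduce to the enumeration of chains $M<L<G$ with $G$ almost simple and 2-transitive on $G/L$ and $L$ 2-transitive on $L/M$, then run through the CFSG list of 2-transitive almost simple groups and, for each, determine all 2-transitive actions of the point stabiliser. The paper carries this out case by case in a sequence of propositions (one for each family in its Theorem~\ref{2transgroupsAS}), with the most delicate analysis, as you anticipated, occurring for the Lie-type families where $G_B$ has a nontrivial unipotent radical and the component $H$ may be affine.
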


Theorems \ref{mainthm} and \ref{trivkerthm} are proved in Section \ref{trivkerthmproof}. The tables for these theorems are given in Section \ref{sec:tables}. The proofs of our three theorems rely on the classification of almost simple $2$-transitive groups, and hence on the Classification of Finite Simple Groups.

\begin{remark}\label{Knm}{\rm
The paper \cite{LDT} studies graphs $\Gamma$ admitting a group $G$ of automorphisms that is transitive on the sets $\Gamma_i$ of vertex pairs at distance $i$, for $i\leq s$. The question (Question 4.2 of  \cite{LDT}) arose as to which complete multipartite graphs $\Gamma=K_{n[m]}$ (with $n\geq 3$) admit such a group $G$ for $s=2$, with $G$ quasiprimitive on vertices. For such $(n,m,G)$, the graph $K_{n[m]}$ admits no ``normal quotient reduction''. 
Since $s=2$ and  $K_{n[m]}$ has diameter $2$, $G$ must have rank 3, and so Theorem \ref{mainthm} gives a list of all possibilities for $(n,m,G)$, completely answering this question.}\end{remark}

\begin{remark}\label{hole}{\rm
Theorem \ref{trivkerthm}, listing the almost simple 2-transitive groups $G$ on $n$ points for which a
stabiliser has a not necessarily faithful 2-transitive action on $m$ points, is a
generalisation of Proposition 2.12 of \cite{GodPra} and reveals a missing case in that
result, namely the results table  for \cite[Proposition 2.12]{GodPra} omits the case where $G$ has socle $\PSL(3,4)$ on $n=21$ points
and a stabiliser induces a 2-transitive $\PSL(2,5)$ or $\PGL(2,5)$ of degree 6. The result in
\cite{GodPra} for which the Proposition 2.12 is applied is the classification \cite[Main
Theorem]{GodPra} of antipodal distance transitive covers of complete graphs.
This missing
case in the proposition does not lead to any additional examples in the graph
classification. This can be seen as follows. For a $6$-fold distance transitive cover of
$\mathbf{K}_{21}$ the distance transitive group must act faithfully and 2-transitively on
the antipodal partition (see the beginning of Section 5 of \cite{GodPra}) and also the
stabiliser order is divisible by the number $20\times 5=100$ of vertices at distance 2
from a given vertex. Since $|\Aut(\PSL(3,4))|$ is not divisible by 25 there is no distance
transitive group inducing a 2-transitive group with socle $\PSL(3,4)$ on the antipodal
partition.
}\end{remark}

In Section \ref{imprimrank3groups} we investigate in detail the structure of imprimitive rank $3$ permutation groups, leading to a proof in Section \ref{ASthmproof} of Theorem \ref{ASimprimgroupthm}.  Then in Section \ref{trivkerthmproof} we work towards proofs of Theorems \ref{trivkerthm} and then  \ref{mainthm}.

\section{Tables for Theorems \ref{mainthm} and \ref{trivkerthm}}\label{sec:tables}
Tables  \ref{bigtable1} and \ref{bigtable2} are organised according to the classification of almost simple $2$-transitive groups given in Theorem \ref{2transgroupsAS}. 
Moreover note the following facts:
\begin{itemize}
\item[(a)] The last two columns of Table \ref{bigtable1} are given  because that information is needed  for the proof of Theorem \ref{mainthm}. 
 \item[(b)]  On Lines $17$ and $18$ of  \ref{bigtable1}, for $n=7$ there are two representations of $A_6$ and $S_6$ on $6$ points. 
 \item[(c)] On Line $13$ of Table \ref{bigtable2}, there are two $2$-transitive representations of $G_B$ when $a\geq 4$.
\item[(d)] On Lines $6$, $9$, and $11$ of Table \ref{bigtable2}, there are up to three $2$-transitive representations of $G_B$.
\end{itemize}

\begin{center}
\begin{table}[H]
\begin{center}
\begin{tabular}{|l|l|l|l|p{7.5cm}|}
\hline
$G$ & $n$ & $m$&  $G_B^B$& extra conditions \\
\hline
$M_{11}$& $11$&$2$&$C_2$&\\
$G\geqslant \PSL(2,q)$&$q+1$&$2$&$C_2$&  $G$  satisfies the conditions explained in Proposition \ref{PSL2} \\
$G\geqslant \PSL(a,q)$& $\frac{q^a-1}{q-1}$&$m$& $\AGL(1,m)$& $a\geq 3$ and $G$ satisfies the conditions on Line $12$ of  Table $\ref{bigtable2}$, and $(md,a)=d$\\
$\PGL(3,4)$&$21$&$6$&$\PSL(2,5)$&\\
$\PGammaL(3,4)$&$21$&$6$&$\PGL(2,5)$&\\
$\PSL(3,5)$&$31$&$5$& $S_5$&\\
$\PSL(5,2)$& $31$&$8$&$A_8$&\\
$\PGammaL(3,8)$&$73$&$28$&  $\Ree(3)$&\\
$\PSL(3,2)$&$7$&$2$& $C_2$&\\
$\PSL(3,3)$&$13$&$3$&$S_3$&\\
 \hline
\end{tabular}
\caption{Quasiprimitive imprimitive rank 3 groups} \label{bigtable3}
\end{center}
\end{table}
\end{center}

\begin{center}
\begin{table}[H]
\begin{tabular}{|l|l|l|l|ll|}
\hline
Line&$G$ & $n$  &$|B|$&$G_B$ & $G_{B,B'}$\\
\hline
1&$A_7$ & $15$ &$8$, $7$& $\PSL(2,7)$ &$A_4$ \\
2&$\PSL(2,11)$ & $11$ & $6$, $5$& $A_5$ &$S_3$\\
3& $\PGammaL (2,8)$&$28$&$2$, $3$&$C_9\rtimes C_6$ &$C_2$\\
4&$\HS$&$176$& $126$&$\PSigmaU(3,5)$ &$\Aut(A_6)$\\
5&&&$2$&&\\
6&$\Co_3$&$276$ &$2$& $McL\rtimes C_2$ &$\PSU(4,3)\rtimes C_2$\\
\hline
7&$M_{11}$ & $11$ & $10$& $M_{10}$ &$C_3^2\rtimes Q_8$\\
8&&&$2$&&\\
9&$M_{11}$ & $12$ & $11$, $12$& $\PSL(2,11)$ &$A_5$\\
10&$M_{12}$ & $12$ & $11$& $M_{11}$ &$M_{10}$\\
11&&&$12$&&\\
12&$M_{22}$ & $22$ & $21$& $\PSL(3,4)$ &$C_2^4\rtimes A_5$\\
13&$M_{22}\rtimes C_2$ & $22$ & $21$& $\PSigmaL(3,4)$&$C_2^4\rtimes S_5$ \\
14&&&$2$&&\\
15&$M_{23}$ & $23$ & $22$& $M_{22}$ &$\PSL(3,4)$\\
16&$M_{24}$ & $24$ & $23$ & $M_{23}$ &$M_{22}$\\
\hline
17&$A_n$&$n$&$n-1$&$A_{n-1}$&$A_{n-2}$ \\
18&$S_n$&$n$&$n-1$&$S_{n-1}$&$S_{n-2}$\\
19&$S_n$&$n$&$2$&$S_{n-1}$&$S_{n-2}$\\
20&$S_5$&$5$&$3$&$S_{4}$&$S_{3}$\\
21&$A_6$&$6$&$6$&$A_{5}$&$A_{4}$\\
22&$S_6$&$6$&$6$&$S_{5}$&$S_{4}$\\
23&$A_7$&$7$&$10$&$A_{6}$&$A_{5}$\\
24&$S_7$&$7$&$10$&$S_{6}$&$S_{5}$\\
25&$A_8$&$8$&$15$&$A_{7}$&$A_{6}$\\
26&$A_9$&$9$&$15$&$A_{8}$&$A_{7}$\\
\hline 
27&$\PSp(2\ell,2)$&$2^{2\ell-1} - 2^{\ell-1}$&$2$&$\PSO^{-}(2\ell,2)$&$C_2^{2\ell-2}\rtimes \PSO^{-}(2\ell-2,2)$\\
28&$\PSp(2\ell,2)$&$2^{2\ell-1} + 2^{\ell-1}$&$2$&$\PSO^{+}(2\ell,2)$&$C_2^{2\ell-2}\rtimes \PSO^{+}(2\ell-2,2)$\\
29&$\PSp(6,2)$&$36$&$8$&$\PSO^{+}(6,2)\cong S_8$&$C_2^4\rtimes \PSO^{+}(4,2)$\\
\hline
\end{tabular}
\caption{Examples from groups in (a) to (d) of Theorem \ref{2transgroupsAS}} \label{bigtable1}
\end{table}
\end{center}

\begin{center}
\begin{table}[H]
\begin{tabular}{|l|l|l|l|p{8.3cm}|}
\hline
Line&$G$ & $n$ & $|B|$& extra conditions \\
\hline
1& $\unrhd\Sz(q)$&$q^2+1$&$q$& \\
\hline
2& $\unrhd\Ree(q)$&$q^3+1$&$q$& \\
3&&&$2$&\\
\hline
4& $\unrhd\PSU(3,q)$&$q^3+1$& $q^2$&  $G\cap\la \tau,\sigma\ra\leq\GammaL(1,q^2)$ transitive on $\GF(q^2)^*$ (see Rem. \ref{PSUdescr})\\
5&&&$m$&$m\geq 3$ prime, $m|(q^2-1)$ and $\ord(p~\mod~m)=m-1$ \\
6&&&$2$&  $q$ is odd or $|G/(G\cap \PGU(3,q))|$ is even \\
\hline
7&$\unrhd\PSL(2,q)$&$q+1$&$q$&$G$ is 3-transitive \\
8&&&$m$&$m\geq 3$ prime, $m|(q-1)$ and $\ord(p~\mod~m)=m-1$ \\
9&&&$2$&   $q\equiv 1\pmod 4$, or  $q\equiv 3\pmod 4$ and $G\geq\PGL(2,q)$, or $|G/(G\cap \PGL(2,q))|$ is even\\
\hline
10&$\unrhd\PSL(a,q)$&$\frac{q^a-1}{q-1}$&$q^{a-1}$& \\
11&&&$2$& $|G/(G \cap\PGL(a,q))|$ is even \\
12& &&$m$&$m$ prime, $\ord(p^i\mod m)=m-1$, $dm|(q-1)$,  
 $dm| (r+\lambda d)\frac{q-1}{p^i-1}$ for some $\lambda\in [0,m-1]$,
  where $i,r,d$ are as in Remark \ref{PSLadescr}\\
13& &&$\frac{q^{a-1}-1}{q-1}$& \\
14&$\unrhd\PSL(3,4)$&$21$&$6$& \\
15&$\PSL(3,5)$&$31$&$5$& \\
16&$\PSL(5,2)$&$31$&$8$& \\
17&$\PGammaL(3,8)$&$73$&$28$&\\
18&$\PSL(4,2)$&$15$&$8$& \\
19&$\PSL(3,2)$&$7$&$2$& \\
20&$\PSL(3,3)$&$13$&$3$& \\
\hline
\end{tabular}
\caption{Examples from groups in (e) to (h) of Theorem \ref{2transgroupsAS}} \label{bigtable2}
\end{table}
\end{center}

\section{Imprimitive rank $3$ groups} \label{imprimrank3groups}
In this section we investigate rank $3$ subgroups $G$ of the wreath product $S_m \wr S_n$, acting imprimitively of degree $mn$. 
We identify $\Omega$ with  $B \times \{1,\ldots,n\}$ and the system of imprimitivity  with $\calB:=\{B\times\{i\}|i\in\{1,\ldots,n\}\}$, with each set $B\times\{i\}$ denoted by $B_i$.
 First, we show that such groups satisfy Hypothesis \ref{mainhypo}.

\begin{lemma} \label{cc}
Let $B$ be a set and suppose that $G \leq \Sym(B) \wr S_n$ is transitive of rank $3$ on $B \times \{1,\ldots,n\}$.  Then both $G_B^B$ and the projection $X$ of $G$ to $S_n$ are $2$-transitive.  In particular, $G$ satisfies Hypothesis $\ref{mainhypo}$ with $H := G_B^B$.
\end{lemma}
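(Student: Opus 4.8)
The plan is to read everything off directly from the definition of rank $3$: fix a point $\omega$, examine the two nontrivial orbits of its stabiliser $G_\omega$, and show that they are forced to be exactly ``the remaining points of the block through $\omega$'' and ``all points lying outside that block''. The two required $2$-transitivity statements then fall out, one from each orbit.

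First I would record the elementary consequences of transitivity and imprimitivity. Since $G$ is transitive on $\Omega$ and preserves the partition $\calB=\{B_i\}$, it is transitive on $\calB$, so $X$ is transitive on $\{1,\ldots,n\}$, and for any block $\Delta\in\calB$ its setwise stabiliser $G_\Delta$ is transitive on $\Delta$; as $G$ is block-transitive the component $G_B^B$ is permutation isomorphic to $G_{B_1}^{B_1}$, so it suffices to treat one block. Because $\calB$ is a nontrivial block system we also have $|B|\geq 2$ and $n\geq 2$, a point I want on record for the counting step below.

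Next, fix $\omega\in\Omega$ and let $\Delta\in\calB$ be the block containing it. Rank $3$ says $G_\omega$ has exactly two orbits on $\Omega\setminus\{\omega\}$. The key observation is that the two sets $\Delta\setminus\{\omega\}$ and $\Omega\setminus\Delta$ are each $G_\omega$-invariant: $G_\omega$ fixes $\Delta$ setwise since it fixes $\omega\in\Delta$ and preserves $\calB$, hence fixes the complement too. Both sets are nonempty, by $|\Delta|=|B|\geq 2$ and $n\geq 2$ respectively. Since every $G_\omega$-orbit is contained in one of these two invariant sets, and there are exactly two orbits partitioning $\Omega\setminus\{\omega\}=(\Delta\setminus\{\omega\})\cup(\Omega\setminus\Delta)$, a pigeonhole argument forces each invariant set to be a single $G_\omega$-orbit. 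Thus $G_\omega$ is transitive on $\Delta\setminus\{\omega\}$ and transitive on $\Omega\setminus\Delta$.

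Finally I would translate these two transitivity facts. From $G_\omega$ transitive on $\Delta\setminus\{\omega\}$: the stabiliser of $\omega$ in $G_{B_1}^{B_1}$ is the image of $G_\omega=(G_{B_1})_\omega$, which is therefore transitive on $B_1\setminus\{\omega\}$, so $G_B^B$ is $2$-transitive on $B$. From $G_\omega$ transitive on $\Omega\setminus\Delta$: a group transitive on a set of points is transitive on the (whole) blocks meeting that set, so $G_\omega$ is transitive on $\calB\setminus\{\Delta\}$; its image in $X$ fixes $\Delta$ and lies in $X_\Delta$, whence $X_\Delta$ is transitive on $\calB\setminus\{\Delta\}$ and $X$ is $2$-transitive. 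I expect no serious obstacle; the only step needing care is the pigeonhole argument, where one must use precisely that the block system is nontrivial to guarantee that \emph{both} invariant sets are nonempty, and that orbits refine the invariant partition, so that ``two orbits into two nonempty invariant pieces'' forces a perfect matching.
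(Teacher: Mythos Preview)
Your proposal is correct and follows essentially the same approach as the paper: both fix a point, use rank $3$ to force the two nontrivial $G_\omega$-orbits to be $\Delta\setminus\{\omega\}$ and $\Omega\setminus\Delta$, and then read off the $2$-transitivity of $G_B^B$ and of $X$ from these two orbits. Your version is slightly more explicit about the pigeonhole step and the nonemptiness of the two invariant pieces, but the argument is the same.
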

\begin{proof}
Let $\alpha \in B$.  Then $G_{(\alpha,1)}$ has exactly $2$ orbits on the remaining points of $B \times \{1,\ldots,n\}$.  Since $B_1= B \times \{1\}$ is a block for $G$, any orbit intersecting non-trivially with $B_1 \backslash \{(\alpha,1)\}$ must be fully contained in $B_1$, and it follows that one orbit is $B_1 \backslash \{(\alpha,1)\}$ while the other is $B \times \{2,\ldots,n\}$.  This implies that $(G_B^B)_\alpha$ is transitive on $B \backslash \{\alpha\}$, and $(G^{\mathcal B})_1$ is transitive on $\{2,\ldots,n\}$; and so both $G_B^B$ and $G^{\mathcal B}$ are $2$-transitive.  (Note that this holds for $n = 2$ or $|B| = 2$ provided that the group $S_2$ is considered to be $2$-transitive.)
\end{proof}

Next we give a partial converse. A group $G$ in Hypothesis \ref{mainhypo} is called \emph{full} if $G$ contains $T^n$, where $T$ is the unique minimal normal subgroup of $H$.
\begin{lemma} \label{imprimrk3lem1}
Let $B$ be a set.  Let $H \leq \Sym(B)$ and let $X \leq S_n$ for some $n$, and assume that both $H$ and $X$ are $2$-transitive.
\begin{itemize}
\item[(i)] The wreath product $H \wr X$ has rank $3$ in its imprimitive action on $B \times \{1,\ldots,n\}$.
\item[(ii)] If $G \leq H \wr X$ has as component $H$, $G$ projects onto $X$, and $G$ is full, then $G$ has rank $3$ on $B \times \{1,\ldots,n\}$.
\end{itemize}
\end{lemma}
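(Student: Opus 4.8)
\emph{Plan.} The strategy is to fix a base point and count the orbits of its stabiliser: I will show there are exactly two nontrivial ones, so that $G$ has rank $3$. Fix $\alpha\in B$ and take $(\alpha,1)$ as base point. Since $B_1=B\times\{1\}$ is a block, any element fixing $(\alpha,1)$ fixes $B_1$ setwise; hence the two sets $B_1\setminus\{(\alpha,1)\}$ and $B\times\{2,\ldots,n\}$ lie in distinct stabiliser-orbits and together exhaust $\Omega\setminus\{(\alpha,1)\}$ (both are nonempty, as $|B|\geqslant 2$ and $n\geqslant 2$ because $H$ and $X$ are $2$-transitive). So it will suffice to prove that $G_{(\alpha,1)}$ is transitive on each of these two sets. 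Transitivity of $G$ on $\Omega$, needed for the rank to be defined, is immediate: $G$ projects onto the transitive group $X$, and $G_{B_1}$ induces the transitive group $H$ on $B_1$. I also note that $H\wr X$ has component $H$, projects onto $X$, and contains $H^n\geqslant T^n$, so it is full; thus part (i) is the special case $G=H\wr X$ of part (ii), and I will only argue (ii).

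For the orbit inside $B_1$, I would use that $G_{B_1}$ induces $G_B^B=H$ on $B_1$, and identify the image of $G_{(\alpha,1)}$ on $B_1$ with $H_\alpha$: any $h\in H_\alpha$ lifts to some $g\in G_{B_1}$ inducing $h$, and such a $g$ fixes $(\alpha,1)$, so the image contains $H_\alpha$; the reverse inclusion is clear. Since $H$ is $2$-transitive, $H_\alpha$ is transitive on $B\setminus\{\alpha\}$, giving transitivity of $G_{(\alpha,1)}$ on $B_1\setminus\{(\alpha,1)\}$.

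The substantive step is the orbit $B\times\{2,\ldots,n\}$, and this is where fullness enters, via a two-part argument. First, because $T=\soc(H)$ is the minimal normal subgroup of the $2$-transitive (hence primitive) group $H$, it is transitive on $B$; and fullness gives $T^n\leqslant G$. In particular, for each fixed $i\geqslant 2$ the elements of $T^n$ that are trivial in all coordinates except the $i$-th lie in $G$, fix $(\alpha,1)$, and induce all of $T$ on $B_i$, hence act transitively on $B_i$. Second, I would show that $G_{(\alpha,1)}$ projects onto $X_1$: given $\sigma\in X_1$, lift it to some $g\in G$; as $\sigma$ fixes $1$, $g$ fixes $B_1$ setwise and sends $(\alpha,1)$ to some $(\alpha',1)$, and then correcting by the element of $T^n$ that equals $t$ in the first coordinate (choosing $t\in T$ with $t\alpha'=\alpha$) and is trivial elsewhere yields an element of $G_{(\alpha,1)}$ still projecting to $\sigma$. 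Since $X$ is $2$-transitive, $X_1$ is transitive on $\{2,\ldots,n\}$, so $G_{(\alpha,1)}$ can carry any block $B_i$ ($i\geqslant 2$) to any block $B_j$ ($j\geqslant 2$). Combining the two parts — first move $B_i$ to $B_j$, then adjust within $B_j$ using the $j$-th coordinate copy of $T$ — gives transitivity of $G_{(\alpha,1)}$ on $B\times\{2,\ldots,n\}$, and the proof is complete.

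I expect the only genuine obstacle to be the projection claim $G_{(\alpha,1)}\twoheadrightarrow X_1$: a naive lift of $\sigma\in X_1$ need not fix the base point, and it is exactly the fullness hypothesis $T^n\leqslant G$, together with transitivity of $T$ on $B$, that permits correcting the first coordinate without disturbing the projection to $X$. Everything else reduces to routine bookkeeping with the wreath-product action.
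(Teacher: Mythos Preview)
Your proposal is correct and follows essentially the same approach as the paper: fix the base point $(\alpha,1)$, use $2$-transitivity of $H$ to get transitivity of $G_{(\alpha,1)}$ on $B_1\setminus\{(\alpha,1)\}$, and use $1\times T^{n-1}\leqslant G_{(\alpha,1)}$ together with the projection $G_{(\alpha,1)}\twoheadrightarrow X_1$ to get transitivity on $B\times\{2,\ldots,n\}$. The only differences are cosmetic: you observe that (i) is the special case $G=H\wr X$ of (ii) and give explicit detail on the projection claim, whereas the paper proves (i) directly and simply asserts the projection.
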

\proof
Part (i): Let $(\alpha,1)$ be a point in $B_1$.  The stabiliser of $(\alpha,1)$ in $H \wr X$ induces $H_\alpha$ on $B_1$ and $H \wr X_1^{\{2,\ldots,n\}}$ on $B \times \{2,\ldots,n\}$.  Since $H$ is $2$-transitive, $H_\alpha$ is transitive on $B_1 \backslash \{(\alpha,1)\}$; and since $H$ is transitive and $X$ is $2$-transitive, $H \wr X_1^{\{2,\ldots,n\}}$ is transitive on $B \times \{2,\ldots,n\}$.  Thus, including the set $\{(\alpha,1)\}$, there are $3$ orbits of $( H \wr X)_{(\alpha,1)}$ on $B \times \{1,\ldots,n\}$.  Thus $H \wr X$ has rank $3$.

Part (ii): Again, the stabiliser of $(\alpha,1)$ in $G$ induces $H_\alpha$ on $B_1$, and $H_\alpha$ is transitive on $B_1 \backslash \{(\alpha,1)\}$.  Let $T = \soc(H)$, so $T^n \leq G$.  Then since $H$ is $2$-transitive and therefore primitive, $T$ is transitive on $B$.  Now $G_{(\alpha,1)}$ contains $1 \times T^{n-1}$, so $G_{(\alpha,1)} \cap T^n$ is transitive on each of the remaining blocks $B \times \{i\}$.  Furthermore $G_{(\alpha,1)}$ projects onto $X_1$ which is transitive on $\{2,\ldots,n\}$; hence $G_{(\alpha,1)}$ is transitive on $B \times \{2,\ldots,n\}$.  Thus $G_{(\alpha,1)}$ has $3$ orbits on $B \times \{1,\ldots,n\}$, and so $G$ has rank $3$.
\qed\medskip

The following Lemma is a corollary on page 147 of \cite{Higman}.
\begin{lemma} \label{uniqueblocksystem}
Suppose that $G$ is transitive but imprimitive on $\Omega$ and has rank $3$. Then $G$ admits a unique nontrivial system of imprimitivity.
\end{lemma}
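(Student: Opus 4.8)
The plan is to show that any two nontrivial block-systems of a rank~$3$ imprimitive group must coincide, by exploiting the fact that block-systems correspond to $G$-invariant partitions, which in turn correspond to suborbits (orbits of a point stabiliser $G_\alpha$) that are ``union-closed'' in the appropriate sense. Since $G$ has rank~$3$, the stabiliser $G_\alpha$ of a point $\alpha\in\Omega$ has exactly three orbits on $\Omega$: the trivial orbit $\{\alpha\}$ and two nontrivial suborbits, say $\Delta_1$ and $\Delta_2$. A nontrivial system of imprimitivity $\calB$ with block $B\ni\alpha$ determines a subset $B\setminus\{\alpha\}$ which is a union of $G_\alpha$-orbits and is itself invariant under $G_\alpha$ (since $G_\alpha$ fixes $B$ setwise as it fixes $\alpha$). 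Thus $B\setminus\{\alpha\}$ is a nonempty proper union of $\{\Delta_1,\Delta_2\}$, and because it is proper (the block is neither a single point nor all of $\Omega$), it must equal exactly one of $\Delta_1$ or $\Delta_2$.

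First I would make precise the standard correspondence: for a transitive group $G$ on $\Omega$ with $\alpha\in\Omega$, the nontrivial block-systems containing $\alpha$ in a block correspond bijectively to the subgroups $G_\alpha<M<G$ with $M\neq G$, via $B=\alpha^M$. Equivalently, each such block $B$ is characterised by $B\setminus\{\alpha\}$ being a $G_\alpha$-invariant union of suborbits that is ``closed'' in the sense that it generates a block. Since there are only two nontrivial suborbits $\Delta_1,\Delta_2$ and $B\setminus\{\alpha\}$ is a nonempty proper subset among these, there are at most two candidate sets, namely $\Delta_1$ and $\Delta_2$. I would then argue that at most one of these can actually arise as (the nontrivial part of) a block.

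To see uniqueness, suppose for contradiction that both $\Delta_1\cup\{\alpha\}$ and $\Delta_2\cup\{\alpha\}$ are blocks. Since a block containing $\alpha$ has size dividing $|\Omega|$ and the blocks of a single system all have the same size, having two systems forces $|\Delta_1|=|\Delta_2|$ only if they come from the same system; but here we would have two \emph{distinct} systems with blocks $\alpha^{M_1}=\Delta_1\cup\{\alpha\}$ and $\alpha^{M_2}=\Delta_2\cup\{\alpha\}$. The key relation is Higman's: one considers how the two suborbits interact, and shows that if both were blocks then $G_\alpha$ would fix $\Delta_1$ and $\Delta_2$ \emph{as blocks} of their respective systems, forcing a combinatorial incompatibility---concretely, the intersection pattern of the two partitions would refine into a third $G$-invariant partition contradicting rank~$3$. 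I would invoke the cited corollary on page~147 of~\cite{Higman}, which establishes exactly this: a rank~$3$ group has at most one pair of nontrivial suborbits available to build blocks, and the self-paired/interchange structure of the two suborbits permits only one of them to be a block.

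The main obstacle is pinning down \emph{why} only one of the two suborbits can yield a block, rather than both. The cleanest route is to note that if $\Delta_i\cup\{\alpha\}$ is a block of size $k_i+1$ (where $k_i=|\Delta_i|$) in a system with $n_i$ blocks, then $n_i(k_i+1)=|\Omega|$, and the rank~$3$ structure constants (the numbers of points of $\Delta_j$ adjacent in each suborbit relation) must be consistent with $\Delta_i$ being a block: specifically, each block is a $\Delta_i$-clique or $\Delta_i$-coclique in the associated rank~$3$ graph, and a genuine rank~$3$ group cannot have both relations simultaneously yielding block structures of the same imprimitivity type without collapsing the rank. Since this delicate parameter argument is precisely the content of Higman's cited corollary, I would quote it directly rather than reprove it, and simply verify that our hypotheses (transitive, imprimitive, rank~$3$) match those of that corollary.
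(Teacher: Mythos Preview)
The paper gives no proof of this lemma at all; it simply records that the statement is a corollary on page~147 of~\cite{Higman}. Your proposal ultimately does the same thing---you set up the framework and then defer the key step to Higman---so in that sense your approach matches the paper's.

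That said, the middle part of your write-up (``combinatorial incompatibility'', ``self-paired/interchange structure'', ``rank~$3$ structure constants'') is vague and does not constitute an argument; if you are going to cite Higman anyway, those paragraphs add nothing. If instead you want a self-contained proof, your setup already gets you most of the way there and the finish is elementary, with no parameter theory needed. You have correctly observed that for any nontrivial block $B\ni\alpha$, the set $B\setminus\{\alpha\}$ is a $G_\alpha$-invariant nonempty proper subset of $\Omega\setminus\{\alpha\}$, hence equals $\Delta_1$ or $\Delta_2$. Now suppose for contradiction that both $B_1=\{\alpha\}\cup\Delta_1$ and $B_2=\{\alpha\}\cup\Delta_2$ are blocks (necessarily of different systems). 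Then $B_1\cap B_2=\{\alpha\}$ and $B_1\cup B_2=\Omega$. For any $\beta\in B_1\setminus\{\alpha\}$, the block of the second system through $\beta$ is disjoint from $B_2$, hence lies inside $\Omega\setminus B_2=B_1\setminus\{\alpha\}$. Thus the blocks of the second system other than $B_2$ partition $B_1\setminus\{\alpha\}$, giving $|B_2|\mid |B_1|-1$; by symmetry $|B_1|\mid |B_2|-1$. Since $|B_1|,|B_2|\geq 2$, this forces $|B_2|\leq |B_1|-1$ and $|B_1|\leq |B_2|-1$, a contradiction. This replaces your appeal to Higman's parameter argument with two lines.
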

It follows that, when $G$ has rank $3$ and is block-faithful with respect to $\mathcal B$, we can omit specifying $\mathcal B$ and just say that $G$  is block-faithful.

\begin{corollary} \label{AS}
Suppose that $G$ is transitive but imprimitive with block system ${\mathcal B}$, and that $G$  has rank $3$ and is block-faithful. Then $G \cong G^{\mathcal B}$ is almost simple.
\end{corollary}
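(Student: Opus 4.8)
The plan is to use block-faithfulness to reduce the statement to a question about $X := G^{\mathcal B}$: since $G$ acts faithfully on $\mathcal B$, the natural map $G \to \Sym(\mathcal B)$ is injective, so $G \cong X$ and it suffices to prove that $X$ is almost simple. By Lemma \ref{cc} the group $X$ is $2$-transitive, and hence by Burnside's dichotomy (the result of Burnside quoted in the Introduction) $X$ has a unique minimal normal subgroup which is either non-abelian simple, in which case $X$ is almost simple, or elementary abelian, in which case $X$ is affine. I would therefore aim to rule out the affine case.

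So suppose, for a contradiction, that $X$ is affine, with regular elementary abelian minimal normal subgroup $N \cong C_p^d$ where $n = p^d$. Transporting along the isomorphism $G \cong X$, let $M \trianglelefteq G$ be the preimage of $N$, so that $M^{\mathcal B} = N$ and, by block-faithfulness, $M \cong N$ is elementary abelian and acts regularly on $\mathcal B$. In particular the stabiliser in $M$ of the block $B_1$ is trivial, so any element of $M$ fixing a point of $\Omega$ fixes its block and is therefore trivial; thus $M$ is semiregular on $\Omega$ with $|M| = n$. Consequently $M$ has exactly $m = |B|$ orbits on $\Omega$, each of size $n$, and since $M$ is transitive on the $n$ blocks, each such orbit is a transversal meeting every block in exactly one point.

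Now the orbits of the normal subgroup $M$ form a $G$-invariant partition $\mathcal C$ of $\Omega$, and hence a system of imprimitivity. This system is nontrivial, since its blocks have size $n \geq 2$ and there are $m \geq 2$ of them, both inequalities holding because $H$ and $X$ are $2$-transitive. Moreover $\mathcal C \neq \mathcal B$: every block of $\mathcal C$ is $M$-invariant, whereas $M$ acts regularly, and hence without fixed points, on the set $\mathcal B$ of size $n \geq 2$, so no $B_i$ is a block of $\mathcal C$. This contradicts Lemma \ref{uniqueblocksystem}, which guarantees that a transitive imprimitive rank $3$ group admits only one nontrivial block system. Hence $X$ is not affine, so $X$ is almost simple, and therefore $G \cong X$ is almost simple.

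The one point I would check with care — and the only place the argument could slip — is the verification that the partition $\mathcal C$ built from the $M$-orbits is both nontrivial and genuinely distinct from $\mathcal B$; once that is secured, the rest is a routine application of the principle that the orbits of a normal subgroup form a block system, combined with the two lemmas already established.
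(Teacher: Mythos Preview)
Your proof is correct and follows essentially the same route as the paper's: both use block-faithfulness to identify $G$ with $G^{\mathcal B}$, invoke Lemma~\ref{cc} and Burnside's dichotomy, and in the affine case observe that the orbits of the regular abelian normal subgroup form a second nontrivial block system, contradicting Lemma~\ref{uniqueblocksystem}. Your argument is somewhat more detailed (explicitly verifying semiregularity of $M$ on $\Omega$ and that the $M$-orbits are transversals), but the key idea is identical.
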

\begin{proof}
The fact that $G \cong G^{\mathcal B}$ follows from the fact that $G$ acts faithfully on the set ${\mathcal B}$ of blocks.  We know by Lemma \ref{cc} that $G^{\mathcal B}$ is $2$-transitive; suppose it is of affine type.  Then $G^{\mathcal B}$ has an abelian minimal normal subgroup $N$ which acts regularly on ${\mathcal B}$.  The $N$-orbits on $\Omega$ form a partition ${\mathcal C}$ of $\Omega$ with the property that $|B \cap C| = 1$ for all $B \in {\mathcal B}$, $C \in {\mathcal C}$. Hence ${\mathcal C}$ is a nontrivial system of imprimitivity different from $\calB$, contradicting Lemma \ref{uniqueblocksystem}.  So $G^{\mathcal B}$ is not affine, and therefore (by \cite[Section 154]{Burnside}) $G^{\mathcal B}$ is almost simple.
\end{proof}

\begin{corollary} \label{qp}
Let $G$ be a transitive imprimitive permutation group of rank $3$ acting on a set $\Omega$.
Then $G$ is  block-faithful  if and only if $G$ is quasiprimitive.
\end{corollary}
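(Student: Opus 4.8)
The plan is to prove the two implications separately, invoking the uniqueness of the nontrivial block system from Lemma \ref{uniqueblocksystem} in each. Throughout I write $K$ for the kernel of the $G$-action on $\mathcal{B}$, so that block-faithfulness is exactly the statement $K = 1$; since $\mathcal{B}$ is nontrivial there are $n \geq 2$ blocks, and $K$ is a normal subgroup of $G$ fixing each block setwise.

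For the implication ``quasiprimitive $\Rightarrow$ block-faithful'' I would argue by contradiction: if $K \neq 1$ then, as $K \trianglelefteq G$, quasiprimitivity forces $K$ to be transitive on $\Omega$; but $K$ preserves each of the (at least two) blocks setwise and so cannot move a point of one block into another, hence is intransitive, a contradiction. This gives $K = 1$, and the argument uses only the definitions.

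For the converse I would first apply Corollary \ref{AS} to conclude that a block-faithful rank $3$ group $G$ is almost simple, and set $T := \soc(G)$, a nonabelian simple group. Since every nontrivial normal subgroup of an almost simple group contains the socle, it suffices to show that $T$ is transitive on $\Omega$. The key observation is that the $T$-orbits, $T$ being normal in the transitive group $G$, form a $G$-invariant partition, i.e. a block system; by Lemma \ref{uniqueblocksystem} it must be the singleton partition, the partition $\{\Omega\}$, or $\mathcal{B}$. I would rule out the singletons because $G$, and hence $T \neq 1$, acts faithfully on $\Omega$, and rule out $\mathcal{B}$ because $T$-orbits equal to the blocks would give $T \leq K = 1$; this leaves $\{\Omega\}$, so $T$ is transitive and therefore every nontrivial normal subgroup of $G$ is transitive, making $G$ quasiprimitive.

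The only delicate step is this converse direction: the crux is the reduction via Corollary \ref{AS} to the almost simple case, which is what makes ``socle transitive'' equivalent to ``quasiprimitive'', together with the recognition of the $T$-orbit partition as a block system so that the uniqueness in Lemma \ref{uniqueblocksystem} can be applied. The forward direction is essentially immediate.
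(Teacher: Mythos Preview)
Your proof is correct, and the underlying idea---that the orbit partition of a nontrivial normal subgroup is a block system and hence must coincide with $\mathcal{B}$ or $\{\Omega\}$ by Lemma~\ref{uniqueblocksystem}---is exactly what the paper uses. The forward direction matches the paper's.

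For the converse, however, you take an unnecessary detour through Corollary~\ref{AS}. The paper dispenses with the almost simple structure entirely: for \emph{any} nontrivial normal subgroup $N\trianglelefteq G$, the $N$-orbits form a system of imprimitivity, which by Lemma~\ref{uniqueblocksystem} is $\mathcal{B}$ or $\{\Omega\}$ (singletons being excluded as $G$ is faithful). If it is $\mathcal{B}$ then $N$ lies in the kernel $K=1$, a contradiction; hence $N$ is transitive. This is precisely the argument you apply to $T=\soc(G)$, but it works verbatim for arbitrary $N$, so invoking Corollary~\ref{AS} and the fact that normal subgroups of almost simple groups contain the socle is superfluous. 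Your route is valid but longer; the paper's is more direct and self-contained.
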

\begin{proof}
Let $\calB$ be a nontrivial system of imprimitivity for $G$, and note that $\calB$ is unique by  Lemma \ref{uniqueblocksystem}. 
Since the orbits of a nontrivial normal subgroup $N$ form a system of imprimitivity of $G$, which must be $\calB$ or  $\{\Omega\}$, and since $N$ is contained in the kernel of the action of $G$ on $\calB$ in the former case, the result follows.
\end{proof}

Note that if $G$ satisfies Hypothesis \ref{mainhypo} then $G$ is not necessarily of rank $3$.  For example, the subgroup $H \times X$ of the group $H \wr X$ has rank $4$.
More precisely, the rank is at most $|B|+2$.
 This prompts the question: Given a wreath product $H \wr X$ acting imprimitively, where $H$ and $X$ are both $2$-transitive, which subgroups $G \leq H \wr X$ with $G_B^B = H$ and $G^{\mathcal B} = X$ are such that $G$ has rank $3$?  We tackle this difficult problem in the next section.

\section{Proof of Theorem \ref{ASimprimgroupthm}.} \label{ASthmproof}

We prove Theorem \ref{ASimprimgroupthm} with a series of smaller results.  Lemmas \ref{allthesame} to \ref{jhalftrans} give some general structural information about imprimitive rank $3$ groups; and then Lemmas \ref{geq3} to \ref{diagcase} deal with those groups that have an almost simple component, as in Theorem \ref{ASimprimgroupthm}.

In the following results we use $\pi_i$ to denote the projection map $H^n \longrightarrow H : (h_1,\ldots,h_n) \longmapsto h_i$.  A \emph{subdirect subgroup} of a direct product $H^n$ is a subgroup $D$ such that $\pi_i(D)=H$ for all $i$, while $D$ is also a {\em full diagonal subgroup} of $H^n$ if $D\cong H$.

\begin{lemma} \label{allthesame}
Let $G \leq H \wr X$ where $G$ projects onto $X$, and where $H$ is the component of $G$.  Assume that $X$ is transitive, and let $\hat{G}$ denote the subgroup $G \cap H^n$ of $G$.  Then $\hat{G}$ is a subdirect subgroup of $L^n$ where $L \unlhd H$; that is, $\pi_i(\hat{G})=L$ for each $i=1,\ldots,n$.
\end{lemma}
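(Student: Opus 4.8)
The plan is to exploit that $\hat G = G\cap H^n$ is the kernel of the projection $G\to X$, hence $\hat G\unlhd G$, and then track what normality of $\hat G$ says about the projections $\pi_i(\hat G)$. I will set $L:=\pi_1(\hat G)$ and establish two things: first that $L\unlhd H$, and second that $\pi_i(\hat G)=L$ for every $i$. The engine behind both is the conjugation formula in the wreath product: for $g=(y_1,\ldots,y_n)\tau\in G$ (with $\tau\in X$) and $\hat h=(h_1,\ldots,h_n)\in\hat G$, a direct computation gives
\[
\pi_i\bigl(g\hat h g^{-1}\bigr)=y_i\,h_{\tau^{-1}(i)}\,y_i^{-1}.
\]
Since $\hat G\unlhd G$, the element $g\hat h g^{-1}$ again lies in $\hat G$, so this formula exhibits conjugates of the coordinates of $\hat h$ inside the appropriate coordinate projections of $\hat G$.

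To prove $L\unlhd H$, I would use the defining property of the component: $H=G_B^B=\pi_1(\{g\in G:\tau(1)=1\})$, so every $h\in H$ occurs as $h=y_1=\pi_1(g)$ for some $g=(y_1,\ldots,y_n)\tau\in G$ with $\tau(1)=1$. For such a $g$ one has $\tau^{-1}(1)=1$, and the formula above specialises to $\pi_1(g\hat h g^{-1})=y_1\,h_1\,y_1^{-1}=h\,\pi_1(\hat h)\,h^{-1}$. As $\hat h$ ranges over $\hat G$ and $g\hat h g^{-1}\in\hat G$, this shows $h\,L\,h^{-1}\subseteq L$ for every $h\in H$, giving $L\unlhd H$.

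Next, to show $\pi_i(\hat G)=L$ for all $i$, I would invoke transitivity of $X$ together with the surjectivity of $G\to X$: choose $g=(y_1,\ldots,y_n)\tau\in G$ with $\tau(1)=i$, so $\tau^{-1}(i)=1$. Conjugation by this fixed $g$ is a bijection of $\hat G$ onto itself, and the formula gives $\pi_i(g\hat h g^{-1})=y_i\,\pi_1(\hat h)\,y_i^{-1}$; letting $\hat h$ run over $\hat G$ yields $\pi_i(\hat G)=y_i\,L\,y_i^{-1}=L$, the last equality because $y_i\in H$ and $L\unlhd H$. Since every coordinate projection of $\hat G$ equals $L$, each $\hat h\in\hat G$ has all coordinates in $L$, so $\hat G\le L^n$, and the equalities $\pi_i(\hat G)=L$ say precisely that $\hat G$ is subdirect in $L^n$, completing the proof.

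The main obstacle I anticipate is bookkeeping rather than conceptual: getting the conjugation formula and its index manipulation ($\tau^{-1}(i)$ versus $\tau(i)$) correct under a fixed wreath-product convention, and being scrupulous that the witness coordinate $y_1$ genuinely ranges over all of $H$ — this is exactly where the hypothesis that $H$ is the \emph{component} $G_B^B$ (and not merely some overgroup) is used, and is the only place the full strength of that hypothesis enters.
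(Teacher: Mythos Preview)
Your argument is correct and follows essentially the same route as the paper: both proofs note that $\hat G$ is the kernel of $G\to X$ and hence normal in $G$, use the conjugation formula in the wreath product together with the component hypothesis to show $L=\pi_1(\hat G)\unlhd H$, and then invoke transitivity of $X$ to conclude $\pi_i(\hat G)=L$ for all $i$. The only cosmetic difference is ordering: the paper first observes the $\pi_i(\hat G)$ are $H$-conjugate and then deduces equality from normality, whereas you prove normality first and read off equality directly from the conjugation formula.
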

\proof  The group $\hat{G}$ is normal in $G$ since it is the kernel of the homomorphism from $G$ onto $X$.  For each $i$, write $L_i := \pi_i(\hat{G})$; so $\hat{G}$ is a subdirect subgroup of $L_1 \times \ldots \times L_n$.  Moreover, as $X$ is transitive and $H$ is the component of $G$, $L_i$ is conjugate in $H$ to $L_1$.

Now, let $h' \in H$.  Since $H$ is the component of $G$ we may assume 
without loss of generality that $H = G_{B_1}^{B_1}$ where $B_1 = B \times
\{1\}$.  But $G_{B_1}$ is a subgroup of $(H \wr X)_{B_1}$, which
factorises as $H \times (H \wr X_1^{\{2,\ldots,n\}})$.  Hence $G_{B_1}$
must project onto $H$ in the first coordinate, and it follows that there
exists an element $g' = (h',h_2,\ldots,h_n)x$ in $G_{B_1}$ (that is to
say, $1^x = 1$).
For any element $(\ell_1,\ldots,\ell_n)$ of $\hat{G}$ we have $(\ell_1,\ldots,\ell_n)^{g'} = (\ell_1^{h'},\ell_{2x^{-1}}^{h_{2x^{-1}}},\ldots,\ell_{nx^{-1}}^{h_{nx^{-1}}})$.  Since $\hat{G}$ is normal in $G$, it follows that $L_1^{h'} = L_1$.  Since $h'$ was arbitrary, $L_1 \unlhd H$.  But $L_1$ is conjugate in $H$ to $L_i$ for all $i$, and hence we have $L_i = L_1$ for all $i$.  Writing $L = L_1$, we obtain that $\hat{G}$ is a subdirect subgroup of $L^n$ where $L \unlhd H$.
\qed\medskip

\begin{lemma} \label{support}
Let $G$ be a group satisfying Hypothesis \ref{mainhypo}.  Let $T = \soc(H)$, let $\hat{G} = G \cap H^n$ and assume that $G$ is not block-faithful with respect to $\mathcal{B}$.  Then at least one of the following holds.
\begin{itemize}
\item[(i)] $G$ is full, that is $\hat{G}$ contains $T^n$;
\item[(ii)] $\hat{G}$ is a full diagonal subgroup of $L^n$ for some nontrivial normal subgroup $L$ of $H$;
\item[(iii)] $T$ is abelian.
\end{itemize}
\end{lemma}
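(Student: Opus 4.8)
The plan is to analyze the subdirect structure of $\hat{G}$ provided by Lemma~\ref{allthesame} and exploit the special properties that $T = \soc(H)$ is the \emph{unique minimal normal subgroup} of the $2$-transitive group $H$. By Lemma~\ref{allthesame} we know $\hat{G}$ is a subdirect subgroup of $L^n$ where $L \unlhd H$; and since $G$ is assumed not block-faithful, $\hat{G}$ is nontrivial, so $L \neq 1$. The key dichotomy to set up is whether the projections of $\hat{G}$ onto pairs of coordinates are ``as large as possible'' (giving fullness) or ``diagonal'' (giving a full diagonal subgroup); this is the classic Goursat/subdirect-product trichotomy, and I expect most of the work to go into forcing one of the three listed conclusions from it.

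First I would reduce to the case where $T$ is non-abelian, since if $T$ is abelian we are immediately in case (iii). So assume $T$ is non-abelian simple. Because $L \unlhd H$ is nontrivial and $T$ is the unique minimal normal subgroup of $H$, we must have $T \leq L$, so in particular $T \leq \pi_i(\hat G)$ for every $i$. The engine of the argument will be the behaviour of the pairwise projections $\hat{G}_{ij} := \pi_{\{i,j\}}(\hat{G}) \leq L \times L$. Each such $\hat{G}_{ij}$ is itself a subdirect subgroup of $L \times L$, and by Goursat's lemma it corresponds to an isomorphism between $L/K_i$ and $L/K_j$ for suitable normal subgroups $K_i, K_j \unlhd L$. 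The two extreme possibilities are that $\hat{G}_{ij} = L \times L$ for all pairs (``independent'' coordinates) or that some pair is genuinely diagonal (i.e.\ the isomorphism is nontrivial on the $T$-part).

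The heart of the proof is then a trichotomy on these pairwise projections. If every pair is independent, that is $\hat{G}_{ij}$ contains $T \times T$ for all $i \neq j$, then a standard argument (combining the independence over all pairs, using that $\hat{G}$ is subdirect and that $X$ is transitive so all coordinates are symmetric) shows $\hat{G} \supseteq T^n$, giving case (i). If on the other hand some pair of coordinates is ``linked'' through $T$, then I would use transitivity of $X$ to propagate this linkage: since $G$ permutes the coordinates transitively via $X$, and the pattern of linkings is preserved, a linkage between one pair forces a compatible linkage structure on all coordinates. The aim is to show that all the $T$-components are tied together into a single full diagonal copy of $T$ (inside $L^n$), which, together with control of the quotient $L/T$, yields that $\hat{G}$ is a full diagonal subgroup of $L^n$, namely case (ii). Here the uniqueness and simplicity of $T$ are essential: any nontrivial identification between $L/K_i$ and $L/K_j$ must respect the image of $T$, and simplicity means the induced map on $T$ is either trivial (independent) or an isomorphism (fully diagonal), with no intermediate behaviour.

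The main obstacle I anticipate is the bookkeeping in the ``mixed'' situation, where some pairs of coordinates are linked and others are not. I would want to rule this out, or show it collapses to one of the clean cases, by a connectedness argument: define a graph on $\{1,\dots,n\}$ with an edge whenever the corresponding pair of $T$-components is diagonally linked, observe that the $X$-action is an automorphism group of this graph, and use the $2$-transitivity (hence in particular transitivity on edges and non-edges) of $X$ to force the graph to be either empty (case (i)) or complete (case (ii)). The delicate point is checking that diagonal linkages are genuinely transitive as a relation—that is, if coordinate $1$ is linked to $2$ and $2$ is linked to $3$, then $1$ is linked to $3$ with a \emph{consistent} isomorphism—so that the ``complete graph'' really does assemble into a single full diagonal subgroup rather than an inconsistent web of pairwise identifications. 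This consistency again follows from the simplicity of $T$ and the fact that $\mathrm{Out}(T)$ composed with the identifications must close up coherently, but verifying it carefully is where the real care is needed.
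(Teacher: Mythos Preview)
Your approach is valid and genuinely different from the paper's. The paper argues via a single nontrivial element of $\hat G$ of \emph{minimal support} $I\subseteq\{1,\dots,n\}$: if $|I|=1$ one gets $T^n\le\hat G$; if $|I|=n$ one gets a full diagonal; and if $1<|I|<n$, a commutator $[\hat G_I,\hat G_I^{\,t}]$ (with $t\in G$ chosen using $2$-transitivity of $X$ to move one index of $I$ outside $I$ while fixing another) has strictly smaller support and hence is trivial, forcing $T$ abelian. By contrast, you work with the pairwise projections $\pi_{\{i,j\}}(\hat G)\le L\times L$ and Goursat's lemma, build the linkage graph, and use $2$-transitivity of $X$ to make it empty or complete. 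Two remarks. First, in the ``complete graph'' case the consistency worry you flag is not actually an issue: once each $\pi_{\{1,j\}}(\hat G)$ is the graph of an isomorphism $\phi_j$, every $g\in\hat G$ satisfies $\pi_j(g)=\phi_j(\pi_1(g))$, so $\hat G$ is already the full diagonal $\{(\ell,\phi_2(\ell),\dots,\phi_n(\ell))\}$. Second, in the ``empty graph'' case your appeal to ``a standard argument'' hides a commutator step (iterated use of $[K_i,K_j]\le K_i\cap K_j$ together with $\pi_k(K_i)\supseteq T$ and $T=[T,T]$), which is essentially the same engine the paper uses---so the two proofs are closer in spirit than they first appear, though the paper's minimal-support packaging is shorter and avoids the graph bookkeeping.
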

\proof  By Lemma \ref{allthesame}, there exists a normal subgroup $L$ of $H$ such that $\hat{G}$ is a subdirect subgroup of $L^n$.  Let $g$ be a non-trivial element of $\hat{G}$ with minimal support $I$ (that is, an element for which the set $I = \{i \, | \, \pi_i(g) \neq 1\}$ has minimal cardinality).  Let $(L^n)_I$ denote the subgroup $\{a \in L^n \, | \, \pi_j(a) = 1 \; \mbox{for} \; \mbox{all} \; j \not\in I\}$ of $L^n$, and let $\hat{G}_I = \hat{G} \cap (L^n)_I$.  Then $\hat{G}_I$ is a non-trivial normal subgroup of $\hat{G}$, and since $|I|$ is minimal, $\hat{G}_I$ is a diagonal subgroup of $(L^n)_I$.
 
Observe now that if $|I| = n$, then $(L^n)_I = L^n$, and so $\hat{G}_I = \hat{G}$; that is to say, $\hat{G}$ is a full diagonal subgroup of $L^n$, and case (ii) holds.  On the other hand, if $I = \{i\}$ for some $i$, then $\hat{G}_I$ is normalised by $G_i := \{(h_1,\ldots,h_n)x \in G \, | \, i^x = i\}$.  Since the component of $G$ is $H$, it follows that $\pi_i(\hat{G}_I) \unlhd H$.  Since $H$ is $2$-transitive, $\soc(H)$ is the unique minimal normal subgroup $T$ of $H$, and hence $\pi_i(\hat{G}_I)$ contains $T$.  Since $\hat{G} \unlhd G$ and $X$ is transitive, $T^n \leq \hat{G}$ and case (i) holds.
 
So assume instead that $1 < |I| < n$, and let $i,j$ be distinct elements of $I$.  Since $X$ is $2$-transitive, there exists $t = (h_1,\ldots,h_n)x^{-1} \in G$ with $i^x = i$ and $j^x \not\in I$.  Now let $a = (a_1,\ldots,a_n)$ and $b = (b_1,\ldots,b_n)$ be elements of $\hat{G}_I$ and observe that for all $k$ we have $\pi_k([a,b^t]) = a_k^{-1} (b_{k^{x}}^{h_{k^{x}}})^{-1} a_k b_{k^{x}}^{h_{k^{x}}}$ which can be non-trivial only if both $a_k$ and $b_{k^x}$ are non-trivial.  So $\pi_k([a,b^t])$ is trivial for all $k \not\in I$; but also $\pi_j([a,b^t])$ is trivial since $b_{j^x}$ is trivial.  It follows that the support of $[\hat{G}_I,\hat{G}_I^t]$ is a proper subset of $I$.  But since $I$ is minimally non-trivial, we must have $[\hat{G}_I,\hat{G}_I^t] = 1$.  This shows in particular that $[\pi_i(\hat{G}_I),(\pi_i(\hat{G}_I))^{h_i}] = 1$.  Since $1 \neq \pi_i(\hat{G}_I) \unlhd H$ (and therefore $1 \neq \pi_i(\hat{G}_I)^{h_i} \unlhd H$ also), the groups $\pi_i(\hat{G}_I)$ and $\pi_i(\hat{G}_I)^{h_i}$ each contain $T$ and $T$ is abelian.
\qed\medskip

\nl
 Lemma \ref{support} essentially proves Theorem \ref{ASimprimgroupthm} except for specifying the exceptional examples that are listed in the theorem.  To determine these we use the next three lemmas. A permutation group is said to be {\it half-transitive} if all its orbits have the same cardinality. In particular each normal subgroup of a transitive group is half-transitive.

\begin{lemma} \label{jhalftrans}
Suppose that $G$ is a rank $3$ imprimitive subgroup of $H \wr X$ on $B \times \{1,\ldots,n\}$, where $H$ is a $2$-transitive group.  
Suppose that $G$ preserves the block system $\{B_1,\ldots,B_n\}$, where as before $B_\ell = B\times \{\ell\}$.   Suppose that $K \leq H^n$ and $K$ is a normal subgroup of $G$ and let $\alpha_i := (\alpha,i) \in B_i$ for some $i\in \{1,\ldots,n\}$.  Then, for all $j \neq i$, $\pi_j(K_{\alpha_i})$ is half-transitive on $B_j$. 
\end{lemma}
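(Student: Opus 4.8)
The plan is to exploit the rank~$3$ orbit structure of a point stabiliser in $G$, together with the elementary fact recorded just before the statement that a normal subgroup of a transitive group is half-transitive. First I would pin down the orbits of $G_{\alpha_i}$. Since $G$ is rank~$3$ and imprimitive with block system $\{B_1,\ldots,B_n\}$, the proof of Lemma~\ref{cc} shows that $G_{\alpha_i}$ has exactly three orbits on $\Omega$, namely $\{\alpha_i\}$, the punctured block $B_i\setminus\{\alpha_i\}$, and the set $\Delta := B\times(\{1,\ldots,n\}\setminus\{i\})$ consisting of the union of all the other blocks; in particular $G_{\alpha_i}$ is transitive on $\Delta$.

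Next, since $K\unlhd G$, I would observe that $K_{\alpha_i} = K\cap G_{\alpha_i}$ is normal in $G_{\alpha_i}$. As $G_{\alpha_i}$ is transitive on $\Delta$, the half-transitivity principle yields that $K_{\alpha_i}$ is half-transitive on $\Delta$: every $K_{\alpha_i}$-orbit contained in $\Delta$ has the same cardinality.

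It then remains to translate this statement about $\Delta$ into a statement about the projection $\pi_j(K_{\alpha_i})$ for a fixed $j\neq i$. Because $K\leq H^n$ lies in the base group, every element of $K_{\alpha_i}$ fixes each block $B_\ell$ setwise, so each $K_{\alpha_i}$-orbit is contained in a single block. For the block $B_j$, an element $(k_1,\ldots,k_n)\in K_{\alpha_i}$ sends $(\beta,j)$ to $(\beta^{k_j},j)$, so the map $(\beta,j)\mapsto\beta$ carries the $K_{\alpha_i}$-orbits lying in $B_j$ bijectively onto the $\pi_j(K_{\alpha_i})$-orbits on $B$, preserving sizes. Since the former all have equal size by the previous step, so do the latter, which is exactly the assertion that $\pi_j(K_{\alpha_i})$ is half-transitive on $B_j$.

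The argument is short and I do not anticipate a genuine obstacle; the only point needing a little care is the book-keeping in the last paragraph, namely verifying that passing to the $j$-th coordinate is orbit-size-preserving, which relies on the fact that $K$ stabilises each block (a consequence of $K\leq H^n$). It is worth noting that half-transitivity of $K_{\alpha_i}$ across the whole of $\Delta$ is in fact stronger than what is required: it forces the orbit sizes to agree even between distinct blocks $B_j$, whereas the lemma asserts only equality within each fixed $B_j$.
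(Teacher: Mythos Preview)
Your proof is correct and uses the same underlying idea as the paper---namely, that $K_{\alpha_i}$ is normal in a suitable stabiliser and that a normal subgroup of a transitive group is half-transitive---but the organisation differs slightly. The paper argues block by block: it observes that $K_{\alpha_i}\unlhd G_{\alpha_i,B_j}$, applies the projection $\pi_j$ (extended to the block stabiliser as the induced action on $B_j$) to obtain $\pi_j(K_{\alpha_i})\unlhd \pi_j(G_{\alpha_i,B_j})$, and then notes that rank~$3$ forces $\pi_j(G_{\alpha_i,B_j})$ to be transitive on $B_j$. You instead work globally on the orbit $\Delta=\bigcup_{j\neq i}B_j$ of $G_{\alpha_i}$, deduce half-transitivity of $K_{\alpha_i}$ on all of $\Delta$, and then restrict to each $B_j$ using that $K\leq H^n$ fixes every block setwise. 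Your route has the pleasant by-product, which you noted, that the orbit sizes of $\pi_j(K_{\alpha_i})$ are in fact the same for every $j\neq i$; the paper's block-by-block argument does not immediately yield this, though it is not needed for the applications.
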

\proof Observe that for any $j \neq i$, $K_{\alpha_i}$ is a normal subgroup of $G_{\alpha_i, B_j}$.  This means that $\pi_j(K_{\alpha_i}) \unlhd \pi_j(G_{\alpha_i, B_j})$.  Since $G$ has rank 3, $\pi_j(G_{\alpha_i, B_j})$ is transitive on $B_j$, and so $\pi_j(K_{\alpha_i})$ is half-transitive.
\qed\medskip

We study groups satisfying  Hypothesis \ref{mainhypo}. We recall that we set  $B_\ell:=B\times \{\ell\}$ for $1\leq\ell\leq n$.

\begin{lemma} \label{geq3}
Suppose that $G$ is a rank $3$ group satisfying Hypothesis $\ref{mainhypo}$ such that $H$ has a non-abelian simple socle $T$.  Assume that the degree $n$ of $X$ is at least $3$, and that $G \cap T^n$ is a full diagonal subgroup of $T^n$.  Then $T$ has at least three conjugacy classes of subgroups equivalent under $\Aut T$ to the conjugacy class of point stabilisers in the $T$-action on $B$.
\end{lemma}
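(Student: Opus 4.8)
The plan is to produce, from the diagonal subgroup $D := G\cap T^n$, exactly $n$ subgroups of $T$ that all lie in the $\Aut T$-class of the point stabiliser $T_\alpha$, and then to use the rank-$3$ hypothesis (through Lemma \ref{jhalftrans}) to force these $n$ subgroups into \emph{pairwise distinct} $T$-conjugacy classes; since $n\geq 3$ this gives the conclusion. First I would record the structure of $D$. As $D$ is a full diagonal subgroup of $T^n$, each $\pi_j|_D\colon D\to T$ is an isomorphism, so fixing an isomorphism $\psi\colon T\to D$ and setting $\mu_j:=\pi_j\circ\psi\in\Aut T$ we have $D=\{(t^{\mu_1},\ldots,t^{\mu_n}) : t\in T\}$, with $D$ acting on the block $B_j$ through its $j$th coordinate. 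A direct computation then gives the stabiliser in $D$ of $\alpha_j=(\alpha,j)$, namely (identified with a subgroup of $T$ via $\psi$) $S_j:=T_\alpha^{\mu_j^{-1}}$, and more generally $\pi_j(D_{\alpha_i})=T_\alpha^{\mu_i^{-1}\mu_j}$. I would also note the standing facts that $T$ is transitive on $B$ (being the socle of the primitive group $H$), so that all point stabilisers $T_\beta$ form a single $T$-class and $T$ acts faithfully on $B$; and that $T_\alpha\neq 1$, since a non-abelian simple group cannot be a regular normal subgroup of the $2$-transitive group $H$ (regular normal subgroups of $2$-transitive groups are elementary abelian).

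The engine of the argument is Lemma \ref{jhalftrans} applied with $K=D$. This is legitimate because $T\unlhd H$ and $X$ permutes the coordinates, so $T^n\unlhd H\wr X$ and hence $D=G\cap T^n\unlhd G$. For every pair $i\neq j$ the lemma then tells us that $\pi_j(D_{\alpha_i})=T_\alpha^{\mu_i^{-1}\mu_j}$ is half-transitive on $B_j$. This subgroup has order $|T_\alpha|>1$; if it fixed a point of $B_j$ it would be contained in, and hence (by equality of orders) equal to, some point stabiliser $T_\beta$, and half-transitivity would then force every orbit to be a singleton, so that it acts trivially on $B_j$ and is therefore trivial by faithfulness — a contradiction. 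Consequently $T_\alpha^{\mu_i^{-1}\mu_j}$ has no fixed point on $B_j$, i.e. it is \emph{not} $T$-conjugate to $T_\alpha$, for all $i\neq j$.

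It remains to convert this into distinctness of $S_1,\ldots,S_n$. Applying the automorphism $\mu_i$ to a hypothetical conjugacy shows that $S_i\sim_T S_j$ if and only if $T_\alpha\sim_T T_\alpha^{\mu_j^{-1}\mu_i}$, and the latter fails for $i\neq j$ by the previous paragraph (applied to the pair $(j,i)$). Hence $S_1,\ldots,S_n$ are pairwise $T$-inconjugate subgroups of $T$, each of which is the image of $T_\alpha$ under an element of $\Aut T$ and so is $\Aut T$-equivalent to the class of point stabilisers. Therefore that $\Aut T$-class splits into at least $n\geq 3$ distinct $T$-conjugacy classes, which is the assertion of the lemma.

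I expect the main obstacle to be the conjugacy bookkeeping. Lemma \ref{jhalftrans} is a statement about a single block at a time, and because $\Out T$ is in general non-abelian one must phrase both the ``no fixed point'' step and the final distinctness so that they genuinely match up; this is precisely why comparing the stabilisers $S_j=T_\alpha^{\mu_j^{-1}}$ (conjugating by $\mu_i$) is cleaner than directly comparing the images $\pi_j(D_{\alpha_1})$. A small but essential supporting point is the standard fact $T_\alpha\neq 1$, which is what turns ``acts trivially on $B_j$'' into an actual contradiction.
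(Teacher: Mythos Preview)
Your proof is correct and follows essentially the same route as the paper's: both compute $\pi_j(D_{\alpha_i})=T_\alpha^{\mu_i^{-1}\mu_j}$, invoke Lemma~\ref{jhalftrans} to see that this nontrivial subgroup is half-transitive and hence fixed-point-free on $B_j$, and conclude it is not $T$-conjugate to $T_\alpha$; then a short conjugacy bookkeeping step finishes. The only cosmetic difference is that the paper normalises the first coordinate (taking $\mu_1=\mathrm{id}$) and exhibits just three pairwise non-conjugate subgroups, whereas your symmetric parameterisation $\mu_1,\ldots,\mu_n$ yields all $n$ classes at once, which is a slight strengthening but not a different idea.
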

\proof  Let $D := G \cap T^n$, a full diagonal subgroup of $T^n$.  Then $D \unlhd G$.  For each $k$ with $2 \leq k \leq n$ there exists an automorphism $\psi_k$ of $T$ such that 
$$D = \{(t,t^{\psi_2},\ldots,t^{\psi_n}) \, | \, t \in T\}.$$
Let $i,j > 1$, and let $\alpha_i := (\alpha,i) \in B_i$ and $\alpha_j := (\alpha,j) \in B_j$.  An element $\bar{t} = (t,t^{\psi_2},\ldots,t^{\psi_n})$ of $D$ is contained in $D_{\alpha_i}$ if and only if $t^{\psi_i} \in T_\alpha$; so $D_{\alpha_i} = \{ \bar{t} \, | \, t \in T_{\alpha}^{\psi_i^{-1}}\}$.  Hence $\pi_1(D_{\alpha_i}) = T_{\alpha}^{\psi_i^{-1}}$ and $\pi_i(D_{\alpha_i}) = T_\alpha$.  Similarly $\pi_j(D_{\alpha_j}) = T_\alpha$.  Observe that $T_\alpha \neq 1$ since $H$ is $2$-transitive and almost simple, and therefore $T$ is non-regular.  Thus $|\pi_1(D_{\alpha_i})| = |T_\alpha| > 1$.  By Lemma \ref{jhalftrans}, $\pi_1(D_{\alpha_i}) = T_\alpha^{\psi_i^{-1}}$ and $\pi_1(D_{\alpha_j}) = T_\alpha^{\psi_j^{-1}}$ are both half-transitive on $B$, and nontrivial.  So neither $\pi_1(D_{\alpha_i})$ nor $\pi_1(D_{\alpha_j})$ is conjugate in $T$ to $T_\alpha$.  It remains to show that $\pi_1(D_{\alpha_i})$ and $\pi_1(D_{\alpha_j})$ are not conjugate to each other.
 
If $\pi_1(D_{\alpha_i})$ and $\pi_1(D_{\alpha_j})$ were conjugate in $T$, then $\pi_1(D_{\alpha_i})^{\psi_j} = T_\alpha^{\psi_i^{-1}\psi_j}$ would be conjugate to $\pi_1(D_{\alpha_j})^{\psi_j} = T_\alpha$.  Note that $\pi_j(D_{\alpha_i}) = T_\alpha^{\psi_i^{-1}\psi_j}$.  Thus $\pi_j(D_{\alpha_i})$ would be conjugate to $\pi_j(D_{\alpha_j}) = T_\alpha$.  Again Lemma \ref{jhalftrans} shows that this is impossible.  Hence the three subgroups $\pi_1(D_{\alpha_i})$, $\pi_1(D_{\alpha_j})$ and $\pi_i(D_{\alpha_i})$ are mutually non-conjugate, and since $\pi_i(D_{\alpha_i}) = T_\alpha$, the result follows.
\qed\medskip

\begin{lemma} \label{only2meanstrans}
Suppose that $G$ is a rank $3$ group satisfying Hypothesis \ref{mainhypo} such that $n = 2$, and assume that $\hat{G} := G \cap (H \times H)$ is a full diagonal subgroup of $H \times H$.  Then there is an automorphism $\varphi$ of $H$ such that $H_{\alpha}^\varphi$ is transitive, for some $\alpha \in B$.
\end{lemma}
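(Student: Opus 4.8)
The plan is to exploit the fact that, with $n=2$ and $X$ $2$-transitive, we have $X = S_2$, so every element of $G$ either fixes both blocks $B_1, B_2$ or interchanges them, and the block-fixing elements are precisely $\hat{G} = G \cap (H \times H)$. Since $\hat{G}$ is a full diagonal subgroup of $H \times H$, the coordinate projections $\pi_1, \pi_2$ restrict to isomorphisms $\hat{G} \to H$, so there is an automorphism $\psi := \pi_2 \circ \pi_1^{-1}$ of $H$ with $\hat{G} = \{(h, h^\psi) \mid h \in H\}$. I would take $\varphi = \psi$.

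First I would fix $\alpha \in B$ and the point $\alpha_1 = (\alpha, 1) \in B_1$, and identify the relevant stabilisers. An element $(h, h^\psi) \in \hat{G}$ fixes $\alpha_1$ exactly when $h \in H_\alpha$, so $\hat{G}_{\alpha_1} = \{(h, h^\psi) \mid h \in H_\alpha\}$, whose image under $\pi_2$ is $H_\alpha^\psi$. The crucial observation is then that $G_{\alpha_1}$ contains no block-interchanging element: any such element would send the block $B_1$ containing $\alpha_1$ to $B_2$ and so could not fix $\alpha_1$. Hence $G_{\alpha_1} \leq \hat{G}$, which forces $G_{\alpha_1} = \hat{G}_{\alpha_1}$.

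Next I would bring in the rank $3$ hypothesis. As in Lemma \ref{cc}, the orbits of $G_{\alpha_1}$ on $\Omega$ are $\{\alpha_1\}$, $B_1 \setminus \{\alpha_1\}$ and $B_2$; in particular $G_{\alpha_1}$ is transitive on $B_2$. Since $G_{\alpha_1} = \hat{G}_{\alpha_1} \leq H \times H$ acts on $B_2 = B \times \{2\}$ solely through its second coordinate, this action is permutationally isomorphic to $H_\alpha^\psi$ acting on $B$. Transitivity on $B_2$ therefore transfers to transitivity of $H_\alpha^\psi = H_\alpha^\varphi$ on $B$, which is what is required.

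The argument is short once the structural facts are lined up, and there is no serious computational obstacle. The one point needing care is the second step: recognising that $2$-transitivity of $X$ on two points confines all of $G_{\alpha_1}$ to the block-fixing subgroup $\hat{G}$, so that the \emph{whole} of $G_{\alpha_1}$ (not merely a normal subgroup of it, as the half-transitivity of Lemma \ref{jhalftrans} would supply) acts on $B_2$ as $H_\alpha^\psi$. This is precisely what upgrades the half-transitivity available in general to the genuine transitivity asserted here.
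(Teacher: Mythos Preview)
Your argument is correct and is essentially the same as the paper's: write $\hat{G}=\{(h,h^\varphi)\mid h\in H\}$ for an automorphism $\varphi$ of $H$, observe that $G_{\alpha_1}=\hat{G}_{\alpha_1}$ since no block-interchanging element can fix $\alpha_1$, and use rank~$3$ to get that $\pi_2(G_{\alpha_1})=H_\alpha^\varphi$ is transitive on $B_2$. Your write-up in fact spells out the step $G_{\alpha_1}\leq\hat{G}$ more explicitly than the paper does.
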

\proof  There is an automorphism $\varphi$ of $H$ such that $\hat{G} = \{(h,h^\varphi) \, | \, h \in H\}$.  Since $G$ has rank $3$, the stabiliser of a point $\alpha_1 = (\alpha,1) \in B_1$ is transitive on $B_2$.  Now $G_{\alpha_1} = \hat{G}_{\alpha_1}$ and as in the proof of Lemma \ref{geq3}, $G_{\alpha_1} = \{(h,h^\varphi) \, | \, h \in H_\alpha\}$.  Thus $\pi_2(G_{\alpha_1}) = H_\alpha^\varphi$ is transitive on $B_2$.
\qed\medskip

The proof of the next lemma relies on the classification of almost simple $2$-transitive groups, and hence on the Classification of Finite Simple Groups.
\begin{lemma} \label{diagcase}
Let $G$ be a rank $3$ group satisfying Hypothesis $\ref{mainhypo}$ such that $H$ has a non-abelian simple socle $T$, and where $X$ has degree $n \geq 2$.  Assume that $G$ is neither full nor block-faithful.  Then $n = 2$ and $T$ is either $A_6$ or $M_{12}$ of degree $6$ or $12$ respectively.  Moreover, $G \cap (H \times H)$ is a full diagonal subgroup of $H \times H$.
\end{lemma}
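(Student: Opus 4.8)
The plan is to invoke the structural dichotomy already established in Lemma~\ref{support}, reduce everything to the two preceding diagonal lemmas, and finish with a pass through the classification of $2$-transitive almost simple groups.

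First I would apply Lemma~\ref{support}. Since $T=\soc(H)$ is non-abelian and $G$ is not full, cases (i) and (iii) of that lemma are excluded, so $\hat G:=G\cap H^n$ is a full diagonal subgroup of $L^n$ for some nontrivial $L\unlhd H$; as $T$ is the unique minimal normal subgroup of $H$ we have $T\leq L$. Writing $\hat G=\{(\ell,\ell^{\phi_2},\ldots,\ell^{\phi_n})\mid \ell\in L\}$ with $\phi_i\in\Aut L$ and $\phi_1=\mathrm{id}$, I would note that $T=\soc(L)$ is characteristic in $L$: any nontrivial normal subgroup of $L$ either centralises or contains the simple group $T$, and $C_L(T)=1$ because $T\leq L\leq\Aut T$. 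Hence each $\phi_i$ fixes $T$ setwise, and restricting shows that $G\cap T^n=\{(t,t^{\phi_2},\ldots,t^{\phi_n})\mid t\in T\}$ is a full diagonal subgroup of $T^n$, independently of $n$ and of the precise $L$.

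Next I would split on $n$. Suppose $n\geq 3$. Then Lemma~\ref{geq3} applies to $G\cap T^n$ and forces $T$ to possess at least three $T$-conjugacy classes of subgroups lying in the single $\Aut T$-class of a point stabiliser $T_\alpha$ of the $T$-action on $B$. The number of such classes is the index in $\mathrm{Out}(T)$ of the stabiliser of the class of $T_\alpha$, so I would rule out ``$\geq 3$'' by checking against Theorem~\ref{2transgroupsAS} that this $\Aut T$-orbit always meets at most two $T$-classes: for $T=\PSL(d,q)$ with $d\geq 3$ only the duality automorphism moves the parabolic $T_\alpha$, to the opposite parabolic, giving exactly two classes; for the $A_n$, $\PSU(3,q)$, $\Sz(q)$, $\Ree(q)$, $\PSp(2\ell,2)$ families and the sporadic and exceptional degrees the point stabiliser lies in at most two classes, a second arising only from an exceptional or graph automorphism (for example the second class of $A_5$ in $A_6$, or of $M_{11}$ in $M_{12}$); a genuine triality producing three classes occurs only for $\POmega^{+}(8,q)$, which is not $2$-transitive. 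This contradiction gives $n=2$. When $n=2$ the block stabiliser $G_{B_1}$ equals $\hat G$ (it is the kernel of $G\to X=S_2$), so the component satisfies $H=G_{B_1}^{B_1}=\pi_1(\hat G)=L$; thus $L=H$ and $\hat G=G\cap(H\times H)$ is a full diagonal subgroup of $H\times H$, which is the final assertion of the lemma.

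Finally, with $n=2$ and $\hat G$ full diagonal in $H\times H$, I would apply Lemma~\ref{only2meanstrans} to obtain $\varphi\in\Aut H$ with $H_\alpha^\varphi$ transitive on $B$, and then identify $T$. A transitive subgroup of order $|H_\alpha|=|H|/m$, where $m=|B|$, forces $m^2\mid |H|$; and since inner automorphisms send $H_\alpha$ only to intransitive point stabilisers, one also needs an outer automorphism moving the class of $H_\alpha$. Running these two filters through Theorem~\ref{2transgroupsAS} eliminates every family and every sporadic or exceptional degree except two: $H$ with socle $A_6$ on $m=6$ points, where the exceptional outer automorphism carries $H_\alpha=A_5$ to the transitive class of $A_5$; and $H$ with socle $M_{12}$ on $m=12$ points, where the outer automorphism carries $H_\alpha=M_{11}$ to the class of $M_{11}$ acting transitively on $12$ points (for instance the degree-$10$ action of $A_6$ dies because $10\nmid|H_\alpha|=36$, while $M_{22}$, $\HS$, $\Co_3$, $M_{23}$ fail the divisibility or outer-automorphism tests). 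This is exactly the stated conclusion. The main obstacle is these two classification passes: the $n\geq 3$ step is clean once phrased via $\mathrm{Out}(T)$ acting on classes of point stabilisers, but the $n=2$ step is more delicate, since after the filter $m^2\mid|H|$ one must decide, degree by degree, whether an outer automorphism really sends the point stabiliser to a transitive subgroup --- a question depending on detailed subgroup structure (the interchange of the two $A_5$-classes in $A_6$ and of the two $M_{11}$-classes in $M_{12}$) rather than on group orders alone.
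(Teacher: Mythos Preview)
Your proposal is correct and follows essentially the same route as the paper: apply Lemma~\ref{support} to get a diagonal in $L^n$, use Lemma~\ref{geq3} together with the fact that a $2$-transitive almost simple $T$ has at most two $\Aut T$-equivalent classes of point stabilisers to force $n=2$, then invoke Lemma~\ref{only2meanstrans} and run through the short list to isolate $A_6$ and $M_{12}$, and finally read off $L=H$ from the $n=2$ kernel structure.

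Two differences are worth noting. First, you make explicit a step the paper leaves implicit: before invoking Lemma~\ref{geq3} you verify that $G\cap T^n$ (not merely $G\cap H^n$) is a full diagonal subgroup of $T^n$, via the characteristic-subgroup argument; this is exactly the missing hypothesis check. Second, for the $n=2$ elimination you use the two filters $m^2\mid|H|$ and ``some outer automorphism of $H$ carries $H_\alpha$ to a transitive subgroup'', whereas the paper first cites the explicit list of six candidates from \cite[Table~7.4]{CameronBook} and then checks transitivity directly. Your divisibility filter dispatches $\PSL(2,11)$, $A_7$ (degree $15$) and $\HS$ immediately, and your second filter kills $\PSL(a,q)$ with $a\geq 3$ since a hyperplane stabiliser is intransitive on points; this is perfectly sound and arguably more self-contained than the table citation. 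The parenthetical about a ``degree-$10$ action of $A_6$'' is really about $\PSL(2,9)$ and is redundant (it has only one class of point stabilisers, so it never enters the list), but it does no harm.
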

\proof  By Lemma \ref{support}, $\hat{G} := G \cap H^n$ is a full diagonal subgroup of $L^n$ where $T \unlhd L \unlhd H$.  Table 7.4 of \cite{CameronBook} shows that $T$ has at most two conjugacy classes of subgroups equivalent under $\Aut T$ to a point stabiliser. Hence by Lemma \ref{geq3}, $n = 2$, and by Lemma \ref{only2meanstrans}, there are two such conjugacy classes.  The possible groups $T$ are $A_6$ of degree $6$, $\PSL(2,11)$ of degree $11$, $M_{12}$ of degree $12$, $A_7$ of degree $15$,  $HS$ of degree $176$, or $\PSL(m,q)$ ($m \geq 3$) of degree $(q^m - 1)/(q-1)$.  But since $n = 2$, Lemma \ref{only2meanstrans} now implies that $H$ must have a transitive subgroup equivalent under $\Aut T$ to a point stabiliser, and this eliminates all possibilities for $T$ but $A_6$ and $M_{12}$.  When $n = 2$, $\hat{G}$ is subdirect in $H \times H$ and so $L = H$.
\qed\medskip

Now we are able to prove Theorem \ref{ASimprimgroupthm}.

\nl
\emph{Proof of Theorem }\ref{ASimprimgroupthm}.  
Suppose that $G$ satisfies Hypothesis \ref{mainhypo} and has rank 3 on $\Omega$. By Corollary \ref{qp}, if $G$ is block-faithful then it is quasiprimitive. If $G$ is not quasiprimitive and we do not have $T^n\leqslant G$, then Lemmas \ref{support} and Lemma \ref{diagcase} imply that $n=2$, $T=A_6$ or $M_{12}$ of degree 6 or 12 respectively, and $\hat{G}=G\cap (H\times H)$ is a full diagonal subgroup of $H\times H$. Moreover, by Lemma \ref{uniqueblocksystem}, $\mathcal{B}$ is the unique system of imprimitivity of $G$ and so $C_G(\hat{G})=1$. Thus $G\leqslant \Aut(T)$.

Suppose first that $T=A_6$. Then $H=A_6$ or $S_6$ and $G$ is a subgroup of $\Aut(A_6)$ that has a transitive action of degree 12 with two blocks of size 6 such that the stabiliser of a point is transitive on the block not containing that point. Hence $G=M_{10}, \PGL(2,9)$ or $\Aut(A_6)$. Similarly, when $T=M_{12}$ we must have $H=M_{12}$ and $G=\Aut(M_{12})$. Conversely, we can see that all cases listed in Theorem \ref{ASimprimgroupthm} have rank 3.
\qed\medskip

\subsection{The case when $H$ is affine}\label{sec:affine}

If the $2$-transitive group $H$ in Theorem \ref{ASimprimgroupthm} is not of almost simple type, then by \cite[Section 154]{Burnside} it is of affine type.  An analogous result to Theorem \ref{ASimprimgroupthm} does not hold in this case; that is to say, when $H$ is affine with socle $T$ then $G \cap T^n$ can be a non-trival subgroup of $H^n$ which is neither $T^n$ nor a full diagonal subgroup of $T^n$.  

Suppose that $H \leq \AGL(a,q)$ for some prime power $q$ and integer $a$, and that $X$ has degree $n$.  Then $|T| = q^a$, and $T^n$ may be viewed as an $(an)$-dimensional vector space over $F := \GF(q)$.  Under the action induced by the top group $X$ of $H \wr X$ on $T^n$, $T^n$ may be viewed as an $FX$-module.  If $X \leq G$, then $\hat{T} = G \cap T^n$ corresponds to an $FX$-submodule of $T^n$.  

Mortimer \cite{Mortimer} shows that the possible $FX$-submodules (where $X$ is $2$-transitive) are too numerous to identify.  (Furthermore, we cannot even guarantee that $X$ is contained in $G$; so there may be still more possibilities for $\hat{T}$.)  However, there are four submodules which may be thought of as `standard' in the sense that they exist for every $2$-transitive group $X$.  These correspond to the following possibilities for $\hat{T}$:
\begin{itemize}
\item[(i)] $\hat{T}$ is trivial
\item[(ii)] $\hat{T}$ is a full diagonal subgroup of $T^n$
\item[(iii)] $\hat{T} = \{(t_1,\ldots,t_n) \, | \, t_1 t_2 \ldots t_n = 1\}$
\item[(iv)] $\hat{T} = T^n$.
\end{itemize}
Cases (i), (ii) and (iv) are analogous to those arising when $H$ is almost simple.  We show in the following example and lemma that case (iii) can also give rise to a rank $3$ group when $H$ is affine.

\begin{example} \label{augideal} {\em

Let $H$ be a $2$-transitive affine group with socle $T$, and let $H_0$ be the stabiliser of $0$, so that $H = T \rtimes H_0$.  Let $n$ be an integer greater than $2$, and let
$S$ be the subgroup $\{(t_1,\ldots,t_n) \, | \, t_1 t_2 \ldots t_n = 1\}$ of $T^n$.  (Note that $S$ is a subgroup because $T$ is abelian.)

Let
$$D = \diag(H_0^n) = \{(h,\ldots,h) \, | \, h \in H_0\} < H^n.$$
Let $X$ be a $2$-transitive group of degree $n$ and let $G = \langle S, D \rangle \rtimes X$.
}
\end{example}

\begin{lemma}
The group $G$ from Example $\ref{augideal}$ is a rank $3$ subgroup of $H \wr X$ with component $H$, and $G \cap T^n = S$.
\end{lemma}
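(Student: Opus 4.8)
The plan is to verify three separate claims about the group $G=\la S,D\ra\rtimes X$ from Example \ref{augideal}: that $G\cap T^n=S$, that the component of $G$ is $H$, and that $G$ has rank $3$. I would treat these in turn, reserving the rank-$3$ computation for last since it is the substantive part.

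First I would pin down $\hat{G}:=G\cap T^n$. Since $G=\la S,D\ra\rtimes X$ and $T^n\unlhd H^n$, an element of $G$ lies in $T^n$ exactly when its images in $X$ and in the quotient $H^n/T^n\cong H_0^n$ are both trivial. The subgroup $D=\diag(H_0^n)$ projects isomorphically onto the diagonal of $H_0^n$ and meets $T^n$ trivially, while $S\le T^n$; so the $X$-part and the $H_0$-part of any element of $\la S,D\ra\rtimes X$ that lies in $T^n$ must vanish, forcing that element into $S$. The reverse inclusion $S\le\hat G$ is immediate. This gives $G\cap T^n=S$, and in particular shows $\hat G$ is of type (iii), not (ii) or (iv).

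Next I would check the component is $H$. The block $B_1$ corresponds to the first coordinate, and $G_{B_1}^{B_1}=\pi_1(G_{B_1})$. Because $S$ surjects onto $T$ under $\pi_1$ (given any $t_1\in T$ one can choose $t_2,\ldots,t_n$ with product making $t_1t_2\cdots t_n=1$, using $n>2$ and the abelianness of $T$) and $D$ surjects onto $H_0$ under $\pi_1$, together they generate $T\rtimes H_0=H$ in the first coordinate; and elements of $X$ fixing the first block contribute nothing outside $H$ there. Hence $H$ is the component, and since $G$ projects onto $X$ by construction and both $H$ and $X$ are $2$-transitive, $G$ satisfies Hypothesis \ref{mainhypo}.

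Finally, the rank-$3$ claim, which is the main obstacle. I would fix a point $\alpha_1=(0,1)\in B_1$ (taking $\alpha=0$, the point stabilised by $H_0$) and show $G_{\alpha_1}$ has exactly three orbits on $\Omega=B\times\{1,\ldots,n\}$. The orbit $\{\alpha_1\}$ and the structure of the block system reduce this, by the argument of Lemma \ref{imprimrk3lem1}, to showing that $G_{\alpha_1}$ is transitive on $B_1\setminus\{\alpha_1\}$ and transitive on $B\times\{2,\ldots,n\}$. For the first, $D_{\alpha_1}=D$ (as each diagonal element fixes $0$) induces $H_0$ on $B_1$, which is transitive on $B\setminus\{0\}$ since $H$ is $2$-transitive. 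For the second, the delicate step is that the stabiliser in $S$ of $\alpha_1$ must still move the remaining blocks around freely: I expect $S_{\alpha_1}=\{(t_1,\ldots,t_n)\in S\mid t_1=0\}$, which (because $n>2$, so at least two free coordinates remain after fixing $t_1=0$ and imposing $\sum t_i=0$) projects onto all of $T$ in each coordinate $i\ge 2$ and in fact acts as the full subdirect ``sum-zero'' subgroup on $B\times\{2,\ldots,n\}$; combined with the diagonal $H_0$-action from $D$ and the transitivity of $X_1$ on $\{2,\ldots,n\}$, this should yield transitivity on $B\times\{2,\ldots,n\}$. The crux is precisely that fixing one coordinate of the augmentation-type subgroup $S$ still leaves a subdirect, indeed transitive-enough, action on the other $n-1$ blocks, which is exactly where the hypothesis $n>2$ is used; I would verify this orbit count carefully and then conclude that $G$ has rank $3$.
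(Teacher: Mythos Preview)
Your proof is correct and follows essentially the same route as the paper's: establish $G\cap T^n=S$ via the normalisation of $S$ by $D$ and $X$, project onto the first coordinate to see the component is $H$, and for rank $3$ use that $D\le G_{(0,1)}$ induces $H_0$ on $B_1$ while $\pi_i(S_{(0,1)})=T$ for each $i\ge 2$ combined with the transitivity of $X_1$ on the remaining blocks. One small remark: your parenthetical invoking $n>2$ to get $\pi_1(S)=T$ in the component step is unnecessary, since $(t_1,t_1^{-1},1,\ldots,1)\in S$ already gives this for any $n\ge 2$; the hypothesis $n>2$ is genuinely used only in the rank-$3$ step, exactly where you identify it.
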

\proof The group $S$ is normalised by both $D$ and $X$, so $G \cap T^n = S$.  Let $\nu_1$ denote the projection from $(H \wr X)_1$ to $H$.  Since $\langle S, D \rangle$ fixes the coordinate $1$, both $\nu_1(S)=T$ and $\nu_1(D)=H_0$ are contained in $\nu_1(G_1)$; that is $H = \langle T,H_0 \rangle \leq \nu_1(G_1)$, so the component of $G$ is $H$.

The stabiliser in $G$ of the vertex $(1,0) \in B_1$ fixes the block $B_1$, and induces $H_0$ on $B_1$ and $X_1$ on the set of blocks. So $G_{(1,0)}$ is transitive on $B_1 \backslash \{(1,0)\}$ and on ${\mathcal B} \backslash \{B_1\}$.  Moreover, the stabiliser in $G_{(1,0)}$ of $B_2$ contains $S_{(1,0)} = \{(1,t_2,\ldots,t_n) \, | \, t_2 t_3 \ldots t_n = 1\}$, and since $n > 2$ we have $\pi_2(S_{(1,0)}) = T$; so $(G_{(1,0)})_{B_2}$ induces a transitive action on $B_2$.  It follows that $G$ has rank $3$. \qed\medskip


\section{Proofs of Theorem \ref{mainthm} and \ref{trivkerthm}} \label{trivkerthmproof}

In this section we will suppose that $G\leqslant \Sym(\Omega)$ satisfies Hypothesis \ref{mainhypo} with corresponding block-set $\calB=\{B\times\{i\}|1\leq i\leq n\}$, that $G$ is block-faithful and $G\cong G^\calB$ is almost simple.  
By Lemma \ref{cc} and Corollary \ref{AS}, this is in particular the case if the action of $G$ on $\Omega$ has  rank 3 and $G$ is block-faithful.

Therefore, we will prove Theorem \ref{mainthm}  and Theorem  \ref{trivkerthm} in parallel. 
In particular both results will follow from Theorem \ref{2transgroupsAS} and the propositions in this section.
Our method is to first determine the 2-transitive almost simple groups whose   
 point-stabiliser admits a (not necessarily 
faithful) 2-transitive action. We then determine which of these give rise to a rank 3 action.
We will often need to use two blocks of $\calB$ which, for simplicity of notation, we will simply denote by $B$ and $B'$.

For convenience we have listed the almost simple 2-transitive groups in the following theorem (compiled from \cite[Table 7.4]{CameronBook}).

\begin{theorem} \label{2transgroupsAS}
Let $G$ be a finite almost simple $2$-transitive permutation group on $n$ points, with unique minimal normal subgroup $T$.
Then one of the following holds.
\begin{itemize}
\item[(a)] $G = \PSL(2,11)$, of degree $11$; $G = \PGammaL(2,8)$, of degree $28$; $G = A_7$,
of degree $15$; $G =\HS$, of degree $176$; $G = \Co_3$, of degree $276$.
\item[(b)] $T = M_n$, of degree $n = 11,12,22,23$ or $24$; or $G = M_{11}$ of degree $12$.
\item[(c)] $T = A_n$, of degree $n \geq 5$.
\item[(d)] $T = \PSp(2\ell,2)$ of degree $n=2^{2\ell-1} \pm 2^{\ell-1}$, with $\ell \geq 3$.
\item[(e)] $T=\Sz(q)$, of degree $n=q^2+1$, with $q = 2^{2c+1} \geq 8$.
\item[(f)] $T=\Ree(q)$, of degree $n=q^3 + 1$, with $q = 3^{2c+1} > 3$.
\item[(g)] $T = \PSU(3,q)$, of degree $n=q^3 + 1$ with $q \geq 3$.
\item[(h)] $T = \PSL(a,q)$ of degree $n=(q^a-1)/(q-1)$ with $(a,q) \neq (2,2)$ or $(2,3)$.
\end{itemize}
\end{theorem}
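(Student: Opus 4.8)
The plan is to deduce this statement from the Classification of Finite Simple Groups (CFSG) together with the resulting classification of finite $2$-transitive permutation groups, arranging the output so that it matches \cite[Table 7.4]{CameronBook}. The reduction is as follows. By Burnside's dichotomy \cite[Section 154]{Burnside}, the unique minimal normal subgroup $T$ of a $2$-transitive group is elementary abelian or non-abelian simple; as $G$ is assumed almost simple we are in the latter case, with $T=\soc(G)$ simple and $T\unlhd G\leq\Aut(T)$. A $2$-transitive group is primitive, so $G_\alpha$ is maximal in $G$, and being $2$-transitive (rank $2$) is equivalent to $G_\alpha$ being transitive on the remaining $n-1$ points, i.e.\ $|G_\alpha:G_{\alpha\beta}|=n-1$. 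Since the normal subgroup $T$ of the primitive group $G$ is itself transitive, the problem reduces to determining, for each non-abelian simple $T$, the transitive $T$-sets on which some $G$ with $T\unlhd G\leq\Aut(T)$ acts $2$-transitively; equivalently, the maximal subgroups $M=G_\alpha$ passing the rank-$2$ test above.

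First I would invoke CFSG to run through the non-abelian simple groups family by family and record the $2$-transitive actions in each case. For $A_n$ this gives the natural degree-$n$ action (case (c)), together with the extra actions forced by the exceptional isomorphisms $A_5\cong\PSL(2,4)\cong\PSL(2,5)$, $A_6\cong\PSL(2,9)$ and $A_8\cong\PSL(4,2)$. For the linear groups one obtains the action on the $(q^a-1)/(q-1)$ points (or hyperplanes) of projective space, giving case (h). For the remaining groups of Lie type the only such actions are the ``natural'' ones: $\Sz(q)$ on $q^2+1$ points, $\Ree(q)$ on $q^3+1$ points and $\PSU(3,q)$ on $q^3+1$ points (cases (e)--(g)), together with the two actions of $\PSp(2\ell,2)$ on the cosets of $\PSO^{\pm}(2\ell,2)$, of degrees $2^{2\ell-1}\pm 2^{\ell-1}$ (case (d)). Finally the sporadic and isolated examples are collected: the Mathieu groups $M_n$ and $M_{11}$ of degree $12$ (case (b)), and $\PSL(2,11)$ of degree $11$, $A_7$ of degree $15$, $\PGammaL(2,8)$ of degree $28$, $\HS$ of degree $176$ and $\Co_3$ of degree $276$ (case (a)). Assembling these lists and checking there are no further possibilities yields exactly (a)--(h).

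The hard part is, of course, the family-by-family enumeration itself: determining all $2$-transitive actions of the simple groups of Lie type is the substantial CFSG-dependent content supplied by the classical literature (Curtis--Kantor--Seitz, Hering, and others), so in practice this step is carried out by citation rather than reproved here. Beyond that, the two genuine obstacles to producing a clean list are the exceptional isomorphisms among small groups of Lie type and alternating groups, which cause overlaps between the families and a few extra low-degree actions (for instance the degree-$15$ action of $A_7$, and the pair of point and hyperplane actions of $\PSL(a,q)$), and the sporadic entries $M_n$, $\HS$ and $\Co_3$, whose $2$-transitive representations must be verified directly from the explicit knowledge of their maximal subgroups. Care is therefore needed both to avoid double-counting and to isolate precisely the isolated small-degree examples populating case (a) and the Mathieu part of case (b).
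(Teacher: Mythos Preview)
Your proposal is consistent with the paper's treatment: the paper does not prove this theorem at all but simply records it ``for convenience'' as a list ``compiled from \cite[Table 7.4]{CameronBook}'', so it is quoted as a known classification rather than established anew. Your sketch of the Burnside reduction followed by the CFSG-based family-by-family enumeration (with citation of Curtis--Kantor--Seitz, Hering, etc.) is exactly the argument underlying that table, and is therefore the same approach, just with more of the narrative spelled out.
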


Two actions of the group $G$ are said to be {\it isomorphic} if the point stabilisers of the two actions
are conjugate in $G$.

\begin{remark}{\rm
In each of (a)-(h), there are at most two 2-transitive actions up to isomorphism. Moreover, whenever one of these groups, say $G$, has two nonisomorphic 2-transitive actions, the corresponding point stabilisers are interchanged by an outer automorphism of $G$ (see the final column of  \cite[p.197]{CameronBook}). Therefore it is sufficient to consider one action for the 2-transitive almost simple group acting faithfully on blocks.
}\end{remark}

The following lemma will be useful.
\begin{lemma} \label{properties}
Suppose that $G\leqslant \Sym(\Omega)$ is a group satisfying Hypothesis $\ref{mainhypo}$, with system of imprimitivity ${\mathcal B}$ consisting of $n$ blocks of size
$m$. 
Then $G$ has rank $3$ if and only if 
\begin{itemize}
 \item[(A)] for distinct $B,B'\in {\mathcal B}$, $G_{B,B'}$ acts transitively on $B \times B'$, and
 \item[(B)] for $B\in {\mathcal B}$ and $v\in B$, the subgroup $G_v=G_{B,v}$ induces a transitive
 action on ${\mathcal B}\setminus\{B\}$.
 \end{itemize}
If $G$ has rank 3, then for distinct $B,B'\in {\mathcal B}$, the following properties hold:
\begin{itemize}
 \item[(C)] the number $m^2$ divides $|G_{B,B'}|$;
 \item[(D)] the subgroup $G_{B,B'}$ admits at least two non-isomorphic transitive actions
 on $m$ points;
 \item[(E)] for $B\in {\mathcal B}$, the action of $G_B$ on $B$ is not isomophic to the
 action of $G_B$ on ${\mathcal B}\setminus\{B\}$;
\item[(F)] for $B\in {\mathcal B}$ and $v\in B$, $G_v$ is not $G_B\cap N$ for any normal subgroup $N$ of $G$.
\end{itemize}
\end{lemma}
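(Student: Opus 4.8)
The plan is to handle the characterisation and the necessary conditions (C)--(F) separately, reducing everything to the orbit structure of a point stabiliser. Since $G$ is transitive, it has rank $3$ precisely when, for $v\in B$, the stabiliser $G_v$ has exactly two orbits on $\Omega\setminus\{v\}$. As $G_v$ fixes the block $B$, these orbits refine the partition of $\Omega\setminus\{v\}$ into $B\setminus\{v\}$ and $\Omega\setminus B$. I would first record that $B\setminus\{v\}$ is always a single $G_v$-orbit: since $H=G_B^B$ is the component and is $2$-transitive, $G_v$ induces $H_v$ on $B$, which is transitive on $B\setminus\{v\}$. Hence rank $3$ is equivalent to $\Omega\setminus B$ being a single $G_v$-orbit, that is, to $G$ being transitive on ordered pairs of points lying in distinct blocks. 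I would then prove this transitivity is equivalent to (A) and (B) by a three-step reduction: given two such pairs, use transitivity of $G$ on $\Omega$ to equalise the first coordinates, then (B) to move the two second points into a common block $B'$, then (A) to finish; conversely, restricting to pairs $(u,w)$ with $u\in B$ and $w\in B'$ yields (A), while fixing the first coordinate yields (B). This gives the ``if and only if''.

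For the remaining implications I assume rank $3$, so (A) and (B) hold. Condition (C) is immediate from orbit--stabiliser, since by (A) the group $G_{B,B'}$ is transitive on the set $B\times B'$ of size $m^2$. For (E), the two actions of $G_B$ have degrees $m$ and $n-1$, so they differ unless $m=n-1$; in that case an isomorphism of actions would, after conjugating within $G_B$, force a point stabiliser $G_u$ (with $u\in B$) to equal a block stabiliser $G_{B,B'}$. Then $G_u$ would fix the block $B'$ setwise, contradicting the fact that rank $3$ makes $G_u$ transitive on $\Omega\setminus B$ (which meets at least two blocks once $n\geq 3$; and $n=2$ is excluded here since it would force $m=1$).

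The main work is (D). The two candidate actions are those of $G_{B,B'}$ on $B$ and on $B'$, both transitive by (A). To show they are non-isomorphic I would suppose their point stabilisers conjugate; after renaming the base point in $B$ one may assume $G_u\cap G_{B'}=G_w\cap G_B$ for some $u\in B$, $w\in B'$. The right-hand side fixes $w$, so $G_u\cap G_{B'}$ --- the stabiliser of the block $B'$ inside $G_u$ --- fixes the point $w\in B'$. But $G_u$ is transitive on $\Omega\setminus B$, on which $\{B''\in\mathcal B:B''\neq B\}$ is a block system, so the stabiliser of $B'$ in $G_u$ is transitive on $B'$; as $m\geq 2$ this contradicts fixing $w$. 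This transitivity-of-a-block-stabiliser step, fed by the rank $3$ transitivity of $G_u$ on $\Omega\setminus B$, is the crux of the whole lemma, and I expect it to be the principal obstacle, since it is the one place needing a genuine transitivity argument rather than bookkeeping with orbits and indices.

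Finally, for (F) I would argue by contradiction from $G_v=G_B\cap N$ with $N\unlhd G$. Because $G$ has rank $3$, Lemma~\ref{uniqueblocksystem} makes $\mathcal B$ the unique nontrivial system of imprimitivity, so the $N$-orbits form the singleton partition, the partition $\mathcal B$, or $\{\Omega\}$. If $N$ is transitive then $G_B\cap N$ is the block stabiliser of $N$, hence transitive on $B$; if the $N$-orbits are the blocks then $N\leq G_B$ and again $G_B\cap N=N$ is transitive on $B$; either way $G_v=G_B\cap N$ would move $v$, which is absurd. In the remaining case $N=1$, so $G_v=1$, which is impossible since a rank $3$ group of degree $mn\geq 4$ is not regular.
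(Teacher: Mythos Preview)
Your argument is correct and, for the equivalence with (A),(B) and for (C)--(E), runs essentially parallel to the paper's (the paper phrases (D) and (E) by contradicting (A) rather than by invoking rank~$3$ transitivity of $G_u$ on $\Omega\setminus B$ directly, but these amount to the same thing).

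Your treatment of (F), however, is genuinely different from the paper's. The paper does not invoke the uniqueness of the block system at all: instead it picks $g\in G$ interchanging $B$ and $B'$ (available since $G^{\mathcal B}$ is $2$-transitive), sets $v'=v^g$, and computes $G_{v'}=G_v^{\,g}=(G_B\cap N)^g=G_{B'}\cap N$; then $G_{B,B',v}=G_B\cap G_{B'}\cap N\leq G_{v'}$ fixes $v'\in B'$, contradicting (A). Your route instead uses Lemma~\ref{uniqueblocksystem} to force the $N$-orbit partition to be $\{\Omega\}$, $\mathcal B$, or singletons, and disposes of each case by showing $G_B\cap N$ is either transitive on $B$ or trivial. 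Both approaches are short and valid; the paper's has the advantage of staying entirely within the (A)--(B) framework and not appealing to an auxiliary lemma, while yours is more structural and makes the role of $N$ as a normal subgroup transparent.
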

\begin{proof}
The group $G$ has rank 3 if and only if the orbits of the stabiliser $G_v$ are $\{v\}$, $B\setminus \{v\}$ and $\Omega\setminus B$, for any $v\in \Omega$, with $B$  the block of imprimitivity containing $v$.

Suppose first that properties (A) and (B) hold. 
Since $G_B^B$ is 2-transitive by Hypothesis \ref{mainhypo},  $B\setminus \{v\}$ is an orbit of $G_v$. Let $v_1,v_2\in \Omega\setminus B$ be distinct and let $B_1$, respectively $B_2$, be the block containing $v_1$, respectively $v_2$. By (B), there exists $g\in G_v$ mapping  $B_1$ onto $B_2$, and by (A), there exists $h\in G_{B,B_2}$ mapping $(v,v_1^g)$ onto $(v,v_2)$. Note that $h\in G_v$ and $gh\in G_v$ maps $v_1$ onto $v_2$. Thus  $\Omega\setminus B$ is also an orbit of $G_v$ and $G$ has rank 3.

Conversely, suppose that $G$ has rank 3. 
Let $B$, $B'$ be distinct blocks of $\calB$, and let $v\in B$.
Let $(v_1,v'_1)$ and $(v_2,v'_2)$ be two elements of $B \times B'$. 
Then there exists $g_1\in G_{v_1}$ mapping $v'_1$ onto $v'_2$, since they both belong to $\Omega\setminus B$, and similarly, there exists $g_2\in G_{v'_2}$ mapping $v_1$ onto $v_2$. 
Thus $g_1g_2$ maps  $(v_1,v'_1)$ onto $(v_2,v'_2)$ and maps the block $B$ containing $v_1$ to the block $B$ containing $v_2$, that is $g_1g_2$ fixes $B$ and similarly $g_1g_2$ fixes $B'$. Thus $g_1g_2\in G_{B,B'}$. Therefore property (A) holds.
Let $B\in {\mathcal B}$ and $v\in B$. Since $G_v$ is transitive on  $\Omega\setminus B$, property (B) follows.
 
Property (C) follows immediately from property (A). It also follows from (A) that $G_{B,B'}$ admits at least one transitive action on $m$ points. If $G_{B,B'}$ admitted only one transitive action on $m$ points (up to conjugation in $G$), then the stabiliser in $G_{B,B'}$ of a vertex in $B$ would also fix a
vertex in $B'$, which would contradict (A). Hence property (D) holds.
If $G_B^B$ was isomorphic to the
action of $G_B$ on ${\mathcal B}\setminus\{B\}$, then $G_{B,B'}$ would fix a point of $B$, contradicting (A). Hence property (E) holds.

Suppose $G_v=G_B\cap N$ where $N$ is a normal
subgroup of $G$. Since $G^{\mathcal B}$ is
2-transitive, there exists $g\in G$ interchanging $B$ and
$B'$. Let $v'=v^g$. Then $G_{v'}= G_v^g=G_B^g\cap N^g=G_{B'}\cap N$. But then
$G_{B,B',v}=G_B\cap N\cap G_{B'}$ is contained in $G_{v'}$ and hence fixes the point $v'$ of
$B'$, contradicting property (A). Hence (F) holds.
\end{proof}

We use the following lemma in the proofs of several Propositions in this section.
\begin{lemma}\label{GBBaffine}
Suppose $G$ satisfies Hypothesis \ref{mainhypo} and let $B$ be a block of imprimitivity. If $G_B^B$ is $2$-transitive of affine type, then $G_B/G_{(B)}\cong G_B^B$ has a unique minimal normal subgroup $K /G_{(B)}$ which is self-centralising in $G_B/G_{(B)}$. 
\end{lemma}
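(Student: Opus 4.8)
The plan is to carry out the whole argument inside the faithful quotient $\bar G := G_B/G_{(B)} \cong G_B^B$, the permutation group induced on $B$ (here $G_{(B)}$ is the kernel of the $G_B$-action on $B$, so the isomorphism is just the first isomorphism theorem). Since $\bar G$ is $2$-transitive of affine type, Burnside's theorem (\cite[Section 154]{Burnside}) guarantees that $\bar G$ has a \emph{unique} minimal normal subgroup and that it is elementary abelian; by the correspondence theorem this subgroup is of the form $K/G_{(B)}$ for a unique normal subgroup $K$ of $G_B$ containing $G_{(B)}$. Writing $V := K/G_{(B)}$, the uniqueness asserted in the lemma is exactly the uniqueness in Burnside's theorem, so it only remains to prove that $V$ is self-centralising in $\bar G$.

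First I would record that $V$ acts regularly on $B$. Since $\bar G$ is $2$-transitive it is primitive, so its nontrivial normal subgroup $V$ is transitive on $B$; and a transitive abelian permutation group is regular. Hence $V$ is regular, and in particular, having fixed a base point $0 \in B$, every point of $B$ is $v(0)$ for a unique $v \in V$.

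Next I would compute $C := C_{\bar G}(V)$. As $V$ is abelian we have $V \le C$, and the key step is to show $C \cap \bar G_0 = 1$, where $\bar G_0$ is the stabiliser of $0$. Indeed, if $c \in C \cap \bar G_0$ then $c$ fixes $0$ and commutes with each $v \in V$, so $c(v(0)) = v(c(0)) = v(0)$ for every $v$; thus $c$ fixes $B$ pointwise, and faithfulness of $\bar G$ on $B$ forces $c = 1$. A routine transitivity argument then finishes the job: given any $c \in C$, choose $v \in V$ with $v(0) = c(0)$, so that $v^{-1}c \in C \cap \bar G_0 = 1$ and hence $c = v \in V$. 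Therefore $C = V$, which is precisely the assertion that $K/G_{(B)}$ is self-centralising in $G_B/G_{(B)}$.

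The argument is structural and presents no serious difficulty; the one place demanding a little care is the passage from the abstract hypothesis ``affine type'' to the concrete statement that $V$ is regular on $B$, since it is this regularity, combined with the faithfulness of $G_B^B$, that makes the centraliser computation collapse to $C = V$.
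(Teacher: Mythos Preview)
Your proof is correct and follows essentially the same approach as the paper. The paper's proof simply asserts, as a standard fact about $2$-transitive affine groups, that the unique minimal normal subgroup is self-centralising, and then invokes the correspondence theorem; you supply the details behind that standard fact (regularity of $V$ on $B$ and the resulting centraliser computation) that the paper leaves implicit.
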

\begin{proof}
 Since $G_B/G_{(B)}\cong G_B^B$ is $2$-transitive of affine type, it has a unique minimal normal subgroup $N$ and $N$ is self-centralising in $G_B^B$. Since $N=K /G_{(B)}$ for some normal subgroup $K$ of $G_B$, the result follows.
\end{proof}

We deal with the various cases of Theorem \ref{2transgroupsAS} in a sequence of propositions.
\begin{prop}
Let $G$ be an imprimitive permutation group on $\Omega$ with system of imprimitivity $\calB$ such that  $G\cong G^\calB$  is as in (a) or (b) of Theorem $\ref{2transgroupsAS}$.
Then $G$ satisfies Hypothesis $\ref{mainhypo}$ if and only if
$G$, $n$ and $|B|$ are as in one of Lines $1$ to $16$ of Table $\ref{bigtable1}$. 
Moreover $G$ has rank $3$ on $\Omega$ if and only if $G=M_{11}$, $n=11$, $|B|=2$ and $G_B^B\cong C_2$.
\end{prop}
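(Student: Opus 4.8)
The plan is to run through the groups $G$ of parts (a) and (b) of Theorem~\ref{2transgroupsAS} one at a time, in each case using the known point-stabiliser $G_B$ (the stabiliser of a block in the $2$-transitive action of degree $n$). Since $G$ is block-faithful and $2$-transitive on $\calB$, Hypothesis~\ref{mainhypo} holds for a block-size $m=|B|$ exactly when $G_B$ has a (possibly unfaithful) $2$-transitive action of degree $m\geq 2$, and such an action corresponds to a normal subgroup $G_{(B)}\unlhd G_B$ with $G_B/G_{(B)}$ faithfully $2$-transitive of degree $m$. So the first step is to list, for each $G_B$, all of its $2$-transitive actions. When $G_B$ is almost simple --- as it is for $A_5$, $\PSL(2,7)$, $\PSigmaU(3,5)$, $McL\rtimes C_2$, $M_{10}$, $\PSL(2,11)$, $M_{11}$, $\PSL(3,4)$, $\PSigmaL(3,4)$, $M_{22}$ and $M_{23}$ --- I read off the faithful $2$-transitive actions by applying Theorem~\ref{2transgroupsAS} to $\soc(G_B)$, and adjoin a degree-$2$ action precisely when $G_B$ has a subgroup of index $2$. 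The one genuinely soluble stabiliser, $C_9\rtimes C_6$ arising for $G=\PGammaL(2,8)$, I handle by examining its normal subgroups directly, obtaining the degree-$2$ and degree-$3$ actions. Collating these (using that two actions coincide iff their stabilisers are conjugate, to avoid overcounting) yields precisely Lines $1$ to $16$ of Table~\ref{bigtable1}.

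For the rank-$3$ statement I would test each line against the necessary conditions of Lemma~\ref{properties}. Here $|G_{B,B'}|=|G|/\big(n(n-1)\big)$ is read off at once, and condition~(C), that $m^2$ divide $|G_{B,B'}|$, already discards every line except the five cases $M_{11}$ of degree $11$ with $m=2$; $\HS$ with $m=2$; $\Co_3$ with $m=2$; $M_{22}\rtimes C_2$ with $m=2$; and $M_{12}$ of degree $12$ with $m=12$. Two of these fall to condition~(D): the two-point stabilisers $\PSU(4,3)\rtimes C_2$ (for $\Co_3$) and $C_2^4\rtimes S_5$ (for $M_{22}\rtimes C_2$) each have a \emph{unique} subgroup of index $2$, their abelianisations being cyclic of order $2$, so cannot support two non-isomorphic transitive actions on $m=2$ points; for $M_{22}\rtimes C_2$ one may alternatively note $G_{(B)}=\PSL(3,4)=G_B\cap M_{22}$, contradicting~(F).

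For the remaining $m=2$ cases I would use the observation that, when $m=2$, the point stabiliser is $G_v=G_{(B)}$ and conditions~(A) and~(B) together are equivalent to surjectivity of the homomorphism $\phi\colon G_{B,B'}\to (G_B/G_{(B)})\times(G_{B'}/G_{(B')})\cong C_2\times C_2$; that is, to the two ``parity'' maps determined by the two block-kernels having \emph{different} kernels. For $\HS$ the two-point stabiliser is $\Aut(A_6)$, a complete group, so the element of $G$ interchanging $B$ and $B'$ may be multiplied by a suitable element of $G_{B,B'}$ so as to centralise $G_{B,B'}$; this adjusted element carries $G_{(B)}$ to $G_{(B')}$ while fixing $G_{B,B'}\cap G_{(B)}$, forcing the two parity kernels to coincide, so $\phi$ is not onto and $\HS$ is excluded. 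By contrast, for $M_{11}$ of degree $11$ with $m=2$ one checks directly that $G_v=A_6$ (the index-$2$ subgroup of $M_{10}=G_B$) fixes both points of its block and is transitive on the remaining $20$ points, so $G$ has rank $3$; equivalently the two parity kernels in $C_3^2\rtimes Q_8$ are distinct and $\phi$ is onto. This is the sole surviving example.

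The genuinely awkward case --- and the step I expect to be the main obstacle --- is $M_{12}$ of degree $12$ with $m=12$, for which none of (C)--(F) applies: here $G_{B,B'}=M_{10}$ is transitive on $B$, the stabiliser $G_v=\PSL(2,11)$ is transitive on $\calB\setminus\{B\}$ (so~(B) holds), and $m^2=144$ divides $|M_{10}|=720$. To exclude it I would compute directly that $M_{10}=G_{B,B'}$ is \emph{not} transitive on $B\times B'$ --- for instance by decomposing the permutation character of $M_{12}$ on the $144$ cosets of $\PSL(2,11)$, or by showing that an order-$60$ point-stabiliser of the transitive action of $M_{10}$ on $B$ fails to be transitive on $B'$ --- whence~(A) fails. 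The other delicate points are the bookkeeping in the first step, especially the soluble stabiliser $C_9\rtimes C_6$ and the groups such as $\PSL(3,4)$ and $\PSL(2,7)$ that admit two inequivalent $2$-transitive actions of the same degree, and keeping careful track of which $2$-transitive representations of $G_B$ genuinely occur.
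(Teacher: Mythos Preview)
Your approach is essentially the paper's: list the $2$-transitive actions of each block-stabiliser $G_B$ to populate Lines $1$--$16$, then eliminate the non-rank-$3$ cases using the necessary conditions (C)--(F) of Lemma~\ref{properties}. Your elimination via (C) leaves exactly the five lines the paper isolates (Lines $5$, $6$, $8$, $11$, $14$), and your treatment of $\Co_3$, $M_{22}\rtimes C_2$ and $M_{11}$ matches the paper's.

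You overlook one shortcut. For $M_{12}$ of degree $12$ with $m=12$, condition~(D) \emph{does} apply: $G_{B,B'}=M_{10}$ has a unique conjugacy class of subgroups of index $12$, all isomorphic to $A_5$ (any order-$60$ subgroup of $M_{10}$ must lie in $A_6$, since $A_6$ has no subgroup of order $30$, and the two $A_6$-classes of $A_5$'s fuse under the outer automorphism present in $M_{10}$). Hence $M_{10}$ has only one transitive action on $12$ points, and (D) disposes of this case immediately. Your proposed direct computation would succeed, but only by rediscovering this fact: the order-$60$ point-stabiliser in $B$ would turn out to be the stabiliser of a point in $B'$ as well.

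Your argument for $\HS$ is genuinely different from the paper's and rather nice. The paper uses Atlas information to identify $G_{(B),B'}\cong M_{10}$ and then argue that every $M_{10}$ in $\PSigmaU(3,5)$ lies in $\PSU(3,5)=G_{(B')}$, so (A) fails. You instead exploit that $G_{B,B'}=\Aut(A_6)$ is complete: the element swapping $B$ and $B'$ can be adjusted to centralise $G_{B,B'}$, forcing $G_{(B),B'}=G_{(B'),B}$ and hence the non-surjectivity of your parity map $\phi$. Both routes land on the same conclusion $G_{(B),B'}=G_{(B'),B}$; yours avoids the Atlas lookup at the cost of invoking completeness.
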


\begin{proof}
Lines 1 to 16 of Table \ref{bigtable1} contain in column 2 all the 2-transitive groups $G^\calB$ in parts (a) or
(b) of Theorem \ref{2transgroupsAS} together with the degree $|\calB|=n$, the stabiliser $(G^\calB)_B\cong G_B$ and
the stabiliser of 2 distinct blocks in columns 3, 5 and 6 respectively. Column 4 contains
the degrees $|B|$ of all 2-transitive representations of $G_B$, using information derived from
Theorem \ref{2transgroupsAS}. 
Thus $G\cong G^\calB$ satisfies Hypothesis \ref{mainhypo} with respect to $n=|\calB|$ and some $B$ if and only if  $G$, $n$, $|B|$ occur in Lines 1 to 16 of Table \ref{bigtable1}.

Now assume that $G$ has rank 3. By Lemma \ref{cc}, $G$ is in one of the lines 1 to 16 of Table \ref{bigtable1}.
Cases on Lines 1, 2, 3, 4, 7, 9, 10, 12, 13, 15, 16 do not satisfy the divisibility condition (C). Hence $G$ is in one of the other cases.

Suppose $G$ were as in  Line 5. We have
$G_B=\PSigmaU(3,5)$ and $G_{(B)}=\PSU(3,5)$.  If $B'$ is a second block of imprimitivity,
then $G_{B,B'}=\Aut(A_6)$ and it can be deduced from the Atlas \cite[p34]{atlas} that
$G_{(B),B'}=M_{10}$. Notice that $G_{(B),B'}$ is contained in $G_{B'}\cong \PSigmaU(3,5)$
and that $G_{(B')}$ is the unique subgroup of $G_{B'}$ isomorphic to $\PSU(3,5)$. By
\cite[p34]{atlas}, the group $\PSigmaU(3,5)$ contains two classes of $M_{10}$ subgroups,
all of which are contained in $\PSU(3,5)$. It follows that $G_{(B),B'}$ is contained in
$G_{(B')}$, and so (A) is not satisfied.

Suppose $G$ were as in  Line 6. Then $G_{B,B'}=\PSU(4,3)\rtimes C_2$ has only
one conjugacy class of subgroups of index 2, namely subgroups isomorphic to $\PSU(4,3)$. This contradicts (D).

Suppose $G$ were as in  Line 8. Then $G_B^B=C_2$ and $G_v=G_{(B)}=A_6$, and it can be checked (for instance with Magma \cite{magma}) that this $G$-action has rank 3.

Suppose $G$ were as in  Line 11. Then $G_{B,B'}=M_{10}$ has only one conjugacy class of
subgroups of index 12, namely subgroups isomorphic to $A_5$. This can be deduced from the
Atlas \cite[p.4]{atlas}. This contradicts (D).

Suppose $G$ were as in  Line 14. For $v\in B$, we have
$G_{B,v}=\PSL(3,4)=\PSigmaL(3,4)\cap M_{22}$, contradicting (F).
\end{proof}

\begin{prop}
Let $G$ be an imprimitive permutation group on $\Omega$ with system of imprimitivity $\calB$ such that  $G\cong G^\calB$  is as in (c) of Theorem $\ref{2transgroupsAS}$.
Then $G$ satisfies Hypothesis $\ref{mainhypo}$ if and only if
$G$, $n$ and  $|B|$ are as in one of Lines $17$ to $26$ of Table $\ref{bigtable1}$. 
Moreover, none of these group actions has rank $3$.
\end{prop}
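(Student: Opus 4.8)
The plan is to handle case (c) of Theorem~\ref{2transgroupsAS}, where $G\cong G^{\calB}$ is $A_n$ or $S_n$ in its natural action on $n$ points. First I would settle the classification part of the statement, namely which lines of Table~\ref{bigtable1} arise. Here $G_B$ is the stabiliser of a point, so $G_B=A_{n-1}$ or $S_{n-1}$, and the size $|B|$ is the degree of some $2$-transitive action of $G_B$. I would therefore run through Theorem~\ref{2transgroupsAS} applied to the almost simple groups $A_{n-1}$ and $S_{n-1}$ (together with their sporadic small-degree coincidences, such as the $2$-transitive actions of $A_5,A_6,A_7,A_8,S_5,S_6,S_7$ of non-natural degree coming from their isomorphisms or exceptional representations) to enumerate all possible $|B|$. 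Matching these against the tabulated data should give exactly Lines~$17$ to $26$, establishing the ``satisfies Hypothesis~\ref{mainhypo} if and only if'' part.

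The substantive claim is that \emph{none} of these actions has rank $3$. The key tool is Lemma~\ref{properties}, and in particular the necessary conditions (C), (D), (E), and especially (F). For each line I would compute $G_{B,B'}$, the stabiliser of two distinct blocks (two distinct points in the natural action), which is $A_{n-2}$ or $S_{n-2}$ or one of the small exceptional stabilisers recorded in the last column of the table. The cleanest approach is to show that condition (F) fails: for a vertex $v\in B$, one checks whether $G_v$ has the form $G_B\cap N$ for a normal subgroup $N$ of $G$. Since $G=A_n$ or $S_n$ has very few normal subgroups ($1$, $A_n$, and $G$ itself), this is highly restrictive, and I expect that in the generic lines~$17$, $18$, $19$ the point stabiliser $G_v$ inside $G_B$ is forced to coincide with $G_B\cap A_n$, violating (F). For the remaining small exceptional lines ($20$--$26$), where $|B|$ comes from an exotic $2$-transitive action of $S_5,A_6,S_6,A_7,S_7,A_8,A_9$, I would eliminate them individually using the divisibility bound (C), the two-action condition (D), or the non-isomorphism condition (E), consulting the Atlas \cite{atlas} or a direct Magma \cite{magma} check where the structure of $G_{B,B'}$ is not immediately transparent.

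The main obstacle will be the handful of small exceptional lines rather than the generic symmetric/alternating lines. For the natural-degree lines the arithmetic is clean: $|G_{B,B'}|=(n-2)!$ or $\tfrac12(n-2)!$, the value $m$ is small ($m=n-1$ or $m=2$), and one quickly sees that $G_{B,B'}$ cannot support two genuinely non-isomorphic transitive actions of the required degree, or that (F) fails outright. The delicate part is keeping track of the exceptional $2$-transitive representations of the small groups, because there the stabiliser $G_{B,B'}$ can be an unexpected subgroup and I must verify carefully that condition (A) (transitivity of $G_{B,B'}$ on $B\times B'$) fails; this is exactly where conditions (D) and (E) do their work, and where an appeal to explicit subgroup data is unavoidable. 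Once every line of $17$--$26$ has been excluded, the conclusion that no such action has rank $3$ follows from Lemma~\ref{properties}, completing the proof of the proposition.
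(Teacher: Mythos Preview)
Your outline is mostly right, and for Line~19 and Lines~20--26 it matches the paper's proof: Line~19 is killed by~(F) exactly as you say (with $G_v=A_{n-1}=G_B\cap A_n$), and Lines~20--26 all fail the divisibility condition~(C), so your plan there is fine.

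The gap is in Lines~17 and~18. Your expectation that $G_v=G_B\cap A_n$ there is wrong. On Line~17 we have $G=A_n$, so $G_B\cap A_n=G_B$ itself, which has index~$1$, not $|B|=n-1$; on Line~18 we have $G=S_n$ and $G_B\cap A_n=A_{n-1}$, which has index~$2$ in $G_B$, again not $n-1$ for $n\geq 5$. So~(F) does nothing here. Nor can you fall back on~(C) uniformly: for instance when $n=13$ one has $(n-1)^2=144\mid 11!=|G_{B,B'}|$ in the $S_n$ case, so the divisibility obstruction fails for infinitely many~$n$.

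The paper's argument for Lines~17 and~18 uses condition~(E): the action of $G_B$ on $B$ cannot be isomorphic to its action on $\calB\setminus\{B\}$, and the latter is the natural action of $A_{n-1}$ or $S_{n-1}$ on $n-1$ points. Hence if $|B|=n-1$ the action on $B$ must be a \emph{second}, non-natural $2$-transitive action of $G_B$ of the same degree, which forces $n-1=6$, i.e.\ $n=7$. Then $G_{B,B'}=A_5$ or $S_5$, and $|B|^2=36$ fails to divide $60$ or $120$, so~(C) finishes. You should replace your~(F) argument for Lines~17--18 with this use of~(E) followed by~(C) in the residual case $n=7$.
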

\proof
We have $\soc(G) = A_n$ with $n\geq 5$, so that $G=S_n$ or $A_n$.  Then $G_B =
S_{n-1}$ or $A_{n-1}$, and $G_{B,B'}= S_{n-2}$ or $A_{n-2}$ respectively. 
Lines $17$ to $26$ of Table \ref{bigtable1} list the possible degrees $m=|B|$ of a 2-transitive action for $G_B$.
Now assume the action of $G$ on $\Omega$ has rank 3. Then $G$ is as in one of Lines $17$ to $26$ of Table \ref{bigtable1}.
 By property (E), $G^B_B$ is not the natural action of $G_B$ on $n-1$ points. So if $m=n-1$, then
$n=7$, we are in the case of Line 17 or 18, and $G^B_B$ is not the natural action of $S_6$ or $A_6$ on $6$ points. However in those cases $G_{B,B'}$ is $A_5$ or $S_5$ and does not have property (C).  Next
if $G=S_n$ and $m=2$, then $G_{B,v}=A_{n-1}=G_B\cap A_n$, contradicting property (F), so Line 19 does not occur.
This leaves only the possibilities for $G_B$ and $m$ in Lines 20 to 26 of  Table \ref{bigtable1}.  None of these satisfy (C). So such a group $G$ does not exist.\qed\medskip

\begin{prop}
Let $G$ be an imprimitive permutation group on $\Omega$ with system of imprimitivity $\calB$ such that  $G\cong G^\calB$  is as in (d) of Theorem $\ref{2transgroupsAS}$.
Then $G$ satisfies Hypothesis $\ref{mainhypo}$ if and only if
 $G$, $n$  and $|B|$ are as in one of Lines $27$ to $29$ of Table $\ref{bigtable1}$. 
Moreover, none of these group actions has rank $3$.
\end{prop}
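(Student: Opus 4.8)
Throughout write $\epsilon=+$ if $n=2^{2\ell-1}+2^{\ell-1}$ and $\epsilon=-$ if $n=2^{2\ell-1}-2^{\ell-1}$. Since $\PSp(2\ell,2)$ has trivial outer automorphism group for $\ell\geq 3$, the hypothesis $G\cong G^\calB$ forces $G=\PSp(2\ell,2)$, acting $2$-transitively on the set of quadratic forms of type $\epsilon$ polarising to the defining symplectic form; here $G_B=\PSO^\epsilon(2\ell,2)$ is the full orthogonal group $O(Q)$ of such a form $Q$. The first task is to list the $2$-transitive actions of $G_B$. The group $\PSO^\epsilon(2\ell,2)$ is almost simple with socle $\POmega^\epsilon(2\ell,2)$ of index $2$, so the quotient map onto $\PSO^\epsilon(2\ell,2)/\POmega^\epsilon(2\ell,2)\cong C_2$ always supplies a $2$-transitive action of degree $2$ (Lines $27$ and $28$). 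Any $2$-transitive action of degree $m>2$ has point stabiliser not containing the socle (otherwise that stabiliser would have index dividing $2$), so the simple normal subgroup $\POmega^\epsilon(2\ell,2)$ would act nontrivially and hence appear as a socle $T$ in Theorem \ref{2transgroupsAS}. Scanning that list together with the exceptional isomorphisms of low-dimensional orthogonal groups, the only possibility with $\ell\geq 3$ is $\POmega^+(6,2)\cong A_8$, whence $G_B\cong\PSO^+(6,2)\cong S_8$ has its natural degree-$8$ action (Line $29$). I would also check that $S_8$ admits no further $2$-transitive action compatible with $G_B$: the two degree-$15$ actions of $A_8\cong\PSL(3\!+\!1,2)$ are interchanged by the graph automorphism realised in $S_8$, so neither extends to $S_8$. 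This gives the equivalence with Lines $27$–$29$; conversely each of those lines visibly satisfies Hypothesis \ref{mainhypo}.

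For Lines $27$ and $28$ I would rule out rank $3$ using condition (A) of Lemma \ref{properties}. Here $|B|=2$, and the action of $G_B=O(Q)$ on the two points of $B$ is exactly the Dickson invariant homomorphism $\theta_Q\colon O(Q)\to O(Q)/\POmega^\epsilon(2\ell,2)\cong C_2$, with kernel $G_{(B)}=\POmega^\epsilon(2\ell,2)$. If $B'$ is a second block, corresponding to a quadratic form $Q'$ with the same polar form, then $G_{B,B'}\leq O(Q)\cap O(Q')$ acts on $B$ through $\theta_Q$ and on $B'$ through $\theta_{Q'}$. The key point is that the Dickson invariant is given by the form-independent formula $\theta_Q(g)\equiv\dim\mathrm{Im}(g-1)\pmod 2$, so $\theta_Q$ and $\theta_{Q'}$ agree on the common orthogonal group $O(Q)\cap O(Q')$. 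Hence every $g\in G_{B,B'}$ either fixes both points of $B$ and both points of $B'$, or interchanges both pairs; consequently $G_{B,B'}$ cannot map a pair $(v_1,v_1')$ to $(v_1,v_2')$, and so is intransitive on $B\times B'$. Thus (A) fails and these actions do not have rank $3$.

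For Line $29$ (the case $\ell=3$, $\epsilon=+$, $|B|=8$) the Dickson shortcut is unavailable, since the degree-$8$ action of $G_B\cong S_8$ is not the quotient onto $C_2$. Here $G_B$ acts naturally on the $8$ points of $B$, acts on $\calB\setminus\{B\}$ as $S_8$ on the $35$ partitions of $B$ into two $4$-sets, and $G_{B,B'}\cong S_4\wr C_2\cong C_2^4\rtimes\PSO^+(4,2)$ is the stabiliser of such a partition. By Lemma \ref{properties} it again suffices to refute condition (A): rank $3$ would require the point stabiliser $G_{B,B',v}\cong S_3\times S_4$ (for $v\in B$) to act transitively on the $8$ points of $B'$. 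I would determine the orbits of this $S_3\times S_4$ on $B'$ via its imprimitive $4+4$ action there, and show that it fixes a point of $B'$ (I expect orbit lengths $1,3,4$, so that the overall rank is $5$); then (A) fails. The main obstacle is precisely this last step: both relevant actions of $G_{B,B'}$ (on $B$ and on $B'$) are imprimitive degree-$8$ actions of $S_4\wr C_2$, and one must pin down how the two $4+4$ partitions are linked by the exceptional isomorphism $\PSO^+(6,2)\cong S_8$ in order to see that $G_{B,B',v}$ stabilises a point of $B'$. This can be settled either by an explicit identification of the two partition actions, or by a direct computation in $\PSp(6,2)$ of the kind already used for Line $8$.
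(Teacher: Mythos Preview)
Your proof is correct. For Lines 27 and 28 you take a genuinely different and more elegant route than the paper. The paper argues by cases: for $\ell\geq 4$ (and for $\ell=3$, $\epsilon=-$) it identifies $G_{(B),B'}$ explicitly as $N\rtimes\POmega^\epsilon(2\ell-2,2)$ and uses irreducibility of $N$ together with simplicity of $\POmega^\epsilon(2\ell-2,2)$ to force this subgroup inside $\POmega^\epsilon(2\ell,2)=G_{(B')}$; for $\ell=3$, $\epsilon=+$ it separately enumerates the three index-$2$ subgroups of $G_{B,B'}\cong S_4\wr S_2$ and checks that $G_{(B),B'}$ and $G_{B,(B')}$, being conjugate in $G$, must coincide. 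Your Dickson-invariant observation --- that $\theta_Q(g)\equiv\dim\mathrm{Im}(g-1)\pmod 2$ is independent of $Q$, hence $G_{(B),B'}=G_{B,(B')}$ automatically --- handles all $\ell\geq 3$ and both signs in one stroke and bypasses any structural analysis of $G_{B,B'}$. What the paper's approach buys in exchange is that it never invokes the form-free formula for the Dickson map, relying only on the subgroup structure visible from the Atlas and basic module theory.

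For Line 29 the paper follows essentially the path you outline and, like you, defers to computation: it records (via Magma) that $S_8$ has a \emph{unique} conjugacy class of subgroups isomorphic to $S_4\times S_3$, each with orbit lengths $1,3,4$ on the eight points. This dissolves the ``linking'' obstacle you flag, since however $G_{B,B',v}\cong S_3\times S_4$ embeds into $G_{B'}\cong S_8$ it is conjugate to the visible partition stabiliser and therefore fixes a point of $B'$. Adopting that single observation closes your remaining step without any need to match up the two $4+4$ partitions.
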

\proof 
Here $G=\PSp(2\ell,2)$, of degree $n=2^{2\ell-1} \pm 2^{\ell-1}$, with $\ell \geq 3$. We have $G_B=\PSO^{\pm}(2\ell,2)$. Let $B'$ be another block in $\calB$. Then $G_{B,B'}$ is the stabiliser of a singular point in the action of $\PSO^{\pm}(2\ell,2)$ on $V(2\ell,2)$, that is, $G_{B,B'}\cong N\rtimes \PSO^{\pm}(2\ell-2,2)$, with $N$ elementary abelian
of order $2^{2\ell-2}$.

Assume first that either $\ell\geq 4$, or $\ell=3$ and $G^\calB$ has degree $28$. Then there is no 2-transitive action of $G_B$ with socle
$\POmega^{\pm}(2\ell,2)$, and so the only 2-transitive action of $G_B$ is on 2 points, with $G_{(B)}=\POmega^{\pm}(2\ell,2)$,  as in  Lines 27 and 28 (with $\ell\geq 4$) of Table \ref{bigtable1}.In this case, $G_{(B),B'}$ is an index 2 subgroup of $G_{B,B'}$. As the induced action  of $G_{B,B'}$ on $N$ is irreducible, we must have $G_{(B),B'}=N\rtimes H$ where $H$ is an index 2 subgroup of $\PSO^{\pm}(2\ell-2,2)$. The only such subgroup is  $\POmega^{\pm}(2\ell-2,2)$, since $\POmega^{\pm}(2\ell-2,2)$ is simple. Therefore $G_{(B),B'}=N\rtimes \POmega^{\pm}(2\ell-2,2)$. Now $G_{(B),B'}$ is contained in $G_{B'}\cong \PSO^{\pm}(2\ell,2)$.
If $G_{(B),B'}$ is not contained in $G_{(B')}
\cong \POmega^{\pm}(2\ell,2)$, then  $G_{(B),B'}\cap G_{(B')}$ has index 2 in $G_{(B),B'}$. Since  $\POmega^{\pm}(2\ell-2,2)$ is simple and acts irreducibly on  $N$, this is not possible. Hence $G_{(B),B'}$ is contained in $G_{(B')}$, and so property (A) fails. Thus this action of $G$ does not have rank 3.

Now assume  $\ell=3$ and $G$ has degree $36$. Then $G_B=\PSO^{+}(6,2)\cong S_8$, which has 2-transitive actions on 2 and 8 points, as in  Lines 28 (with $\ell=3$) and 29 of Table \ref{bigtable1}.
The only index 35 subgroups of $S_8$ are stabilisers of  partitions of an 8-set into
two 4-sets (see the Atlas
\cite[p.22]{atlas}), and hence $G_{B,B'}\cong S_4\wr S_2$.

Consider first the case $|B|=2$. Then $G_{(B)}=\POmega^{+}(6,2)\cong A_8$, and by \cite[p.22]{atlas} $G_{(B),B'}\cong (S_4\wr S_2)\cap A_8
\cong 2^4\rtimes(S_3\times S_3)$ acting transitively on 8 points. Now $G_{(B),B'}$ and $G_{B,(B')}$ have index 2 in $G_{B,B'}\cong S_4\wr S_2$. 
Any index 2 subgroup in $G_{B,B'}$ contains the derived subgroup  $(G_{B,B'})'=(2^4\rtimes(C_3\times C_3)).2$, and so there are exactly three subgroups of index 2 in $G_{B,B'}$, namely $S_4\times S_4$, $(S_4\wr S_2)\cap A_8$ and $(2^4\rtimes(C_3\times C_3)).C_4$, and they are pairwise nonisomorphic. 
Since $G_{(B),B'}$ and $G_{B,(B')}$ are conjugate in $G$, we must have that $G_{(B),B'}=G_{B,(B')}$. Thus property (A) fails and this action of $G$ does not have rank 3.

Now consider the case $|B|=8$. Here $G_{B,v}\cong S_7$ for $v$ a point of $B$.
Therefore
$G_{B,B',v}$ is the stabiliser in $G_B=S_8$ of a partition with parts of size $1$, $3$, $4$, and $G_{B,B',v}=S_4\times S_3$. We have $G_{B,B',v}<G_{B'}\cong S_8$. 
It can be computed, for instance with Magma \cite{magma}, that there is only one
conjugacy class of subgroups isomorphic to $S_4\times S_3$ in $S_8$, and each such group has orbits of size  $1$, $3$, $4$.  Hence $G_{B,B',v}$ fixes a point of $B'$, contradicting property (A).
So this action of $G$ does not have rank 3 either.
\qed\medskip

\begin{prop}
Let $G$ be an imprimitive permutation group on $\Omega$ with system of imprimitivity $\calB$ such that  $G\cong G^\calB$  is as in (e) of Theorem $\ref{2transgroupsAS}$.
Then $G$ satisfies Hypothesis $\ref{mainhypo}$ if and only if $G$, $n$ and $|B|$ are as in Line $1$ of Table $\ref{bigtable2}$. 
Moreover, this group action does not have rank $3$.
\end{prop}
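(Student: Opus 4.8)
The plan is to first determine all $2$-transitive actions of the block stabiliser $G_B$, which will pin down $|B|$, and then to show directly that the divisibility condition (C) of Lemma \ref{properties} fails. Here $\soc(G)=\Sz(q)$ with $q=2^{2c+1}\geq 8$, and since $\Aut(\Sz(q))=\Sz(q)\rtimes C_{2c+1}$ we have $G=\Sz(q)\rtimes C_f$ for some $f\mid(2c+1)$. A point stabiliser of $\Sz(q)$ in its degree $q^2+1$ action is a Borel subgroup $Q\rtimes C_{q-1}$, where $Q$ is the Suzuki $2$-group of order $q^2$ with $Z(Q)=\Phi(Q)=[Q,Q]$ elementary abelian of order $q$ and $Q/Z(Q)$ also elementary abelian of order $q$; moreover $C_{q-1}$ acts irreducibly (indeed regularly on nonzero vectors) on each of $Z(Q)$ and $Q/Z(Q)$, so the only $G_B$-invariant subgroups of $Q$ are $1$, $Z(Q)$ and $Q$. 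As field automorphisms normalise $Q$, the block stabiliser $G_B$ has $Q=O_2(G_B)$ as a normal Sylow $2$-subgroup with $G_B/Q$ of odd order $(q-1)f$.

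To find the $2$-transitive actions of $G_B$, note that since $G_B$ is soluble, any such action is of affine type by \cite[Section 154]{Burnside}; let $G_{(B)}$ be its kernel, so by Lemma \ref{GBBaffine} the group $G_B^B=G_B/G_{(B)}$ has a self-centralising elementary abelian socle $V=K/G_{(B)}$ of order $|B|$. Write $\bar{Q}$ for the image of $Q$ in $G_B^B$. First I would exclude $V$ of odd order: then $\bar{Q}$ is a normal $2$-group with $[\bar{Q},V]=1$, so $\bar{Q}\leq C_{G_B^B}(V)=V$ forces $\bar{Q}=1$, whence $Q\leq G_{(B)}$ and $G_B^B$ is a quotient of the odd order group $G_B/Q$ — impossible, as a $2$-transitive group has even order. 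Hence $V$ is a $2$-group and $\bar{Q}=Q/(Q\cap G_{(B)})$ is a normal Sylow $2$-subgroup of $G_B^B$ with $Q\cap G_{(B)}\in\{1,Z(Q),Q\}$. The case $Q\leq G_{(B)}$ is excluded as above; the case $Q\cap G_{(B)}=1$ gives $\bar{Q}\cong Q$ non-abelian, but $V\leq Z(\bar{Q})$ (since $V$ is minimal normal and self-centralising) forces $\bar{Q}\leq C_{G_B^B}(V)=V$, contradicting non-abelianness. Thus $Q\cap G_{(B)}=Z(Q)$, so $V=\bar{Q}=Q/Z(Q)$ is elementary abelian of order $q$ and $|B|=q$. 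Conversely, on $Q/Z(Q)$ the torus $C_{q-1}$ acts regularly on the nonzero vectors, so $G_B^B$ contains $\AGL(1,q)$ and is a $2$-transitive subgroup of $\AGammaL(1,q)$; hence $G_B$ does admit such an action, and $G$ satisfies Hypothesis \ref{mainhypo} precisely when $|B|=q$, that is, on Line $1$ of Table \ref{bigtable2}.

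Finally I would show this action is not of rank $3$. For distinct blocks $B,B'$ the intersection $G_{B,B'}\cap\Sz(q)$ is the two-point stabiliser $C_{q-1}$ of $\Sz(q)$, so $|G_{B,B'}|$ divides $(q-1)f$ and in particular is odd. Since $m=|B|=q$ is a power of $2$ exceeding $1$, the number $m^2=q^2$ cannot divide the odd number $|G_{B,B'}|$, so property (C) of Lemma \ref{properties} fails; therefore $G$ does not have rank $3$. The main work lies in the structural analysis of the Borel subgroup and the three-case argument forcing $|B|=q$; once this is in place, the failure of rank $3$ follows immediately from the parity obstruction in (C).
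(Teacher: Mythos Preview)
Your proof is correct and follows essentially the same approach as the paper: both analyse the Borel subgroup $Q\rtimes C_{q-1}$ of $\Sz(q)$, use Lemma~\ref{GBBaffine} (self-centralising socle in the affine $2$-transitive quotient) together with the parity constraint that a $2$-transitive group has even order while $G_B/Q$ has odd order, to force the kernel on $B$ to meet $Q$ in exactly $Z(Q)$ and hence $|B|=q$; and both conclude by observing that $|G_{B,B'}|\mid (q-1)f$ is odd, so property~(C) fails. Your three-case split on $Q\cap G_{(B)}\in\{1,Z(Q),Q\}$ is a mild reorganisation of the paper's faithful/unfaithful dichotomy followed by the identification $G_{(B)}=Z(Q)$, but the substance is the same.
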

\proof
We have $T=\Sz(q)$ with $q=2^e$ and $e$ odd, $e\geq 3$, $T\leq G\leq \Aut(T)=A$, and $A_B=(Q\rtimes\langle \tau\rangle)\rtimes\langle\sigma\rangle$, where
$|Q|=q^2$, $|\tau|=q-1$ and $|\sigma|=e$. More precisely, $G=T\rtimes\langle \sigma^{e/d}\rangle$ for some divisor $d$ of $e$. 
Now $T_B$ has a unique minimal normal
subgroup $K$. Moreover, $|K|=q$, $K$ is the centre of $Q$ and $A_B/K\cong\AGammaL(1,q)$, see
\cite{suz}.  Furthermore, $G_B=(Q\rtimes\langle \tau\rangle)\rtimes\langle
\sigma^{e/d}\rangle$ and we can choose $B'$ such that $G_{B,B'}=\langle
\tau\rangle\rtimes\langle \sigma^{e/d}\rangle$.

Now $G_B=A_B\cap G$, which contains $T_B=Q\rtimes\langle \tau\rangle$.
Suppose first that $G_B$ acts faithfully on $B$. Then $G_B^B$ is affine with minimal normal subgroup $K$ and the fact that $Q$
centralises $K$ contradicts Lemma \ref{GBBaffine}. Thus $G_B$ is unfaithful on $B$ and so $K^B=1$. Moreover, as a
2-transitive group has even order and both $q-1$ and $e$ are odd, it follows that
$G_{(B)}=K$. Then $\AGL(1,q)\leq G_B^B\leq \AGammaL(1,q)$ and the only faithful
2-transitive action of this group is of degree $q$. Thus $|B|=q$, as in  Line 1 of Table \ref{bigtable2}. 
If this $G$-action had rank 3, then by property (C), we would have $m^2=q^2$ dividing $|G_{B,B'}|$, which is not the case.
\qed\medskip

\begin{prop}
Let $G$ be an imprimitive permutation group on $\Omega$ with system of imprimitivity $\calB$ such that  $G\cong G^\calB$   is as in (f) of Theorem $\ref{2transgroupsAS}$.
Then $G$ satisfies Hypothesis $\ref{mainhypo}$ if and only if
$G$, $n$, $|B|$ are as in  Lines $2$ or $3$ of Table $\ref{bigtable2}$. 
Moreover, neither of these group actions has rank $3$.
\end{prop}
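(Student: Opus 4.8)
The plan is to follow the pattern of the preceding (Suzuki) proposition, using the standard description of the point stabiliser of $\Ree(q)$ in its $2$-transitive action. Write $q=3^e$ with $e$ odd, $e\ge 3$, so that $T=\Ree(q)$, $T\le G\le A:=\Aut(T)=T\rtimes\langle\sigma\rangle$ with $|\sigma|=e$, and $G=T\rtimes\langle\sigma^{e/d}\rangle$ for some divisor $d$ of $e$. First I would record the structure of $G_B$: one has $G_B=P\rtimes(\langle\tau\rangle\rtimes\langle\sigma^{e/d}\rangle)$, where $P$ is the Sylow $3$-subgroup of order $q^3$, $|\tau|=q-1$ and $|\langle\sigma^{e/d}\rangle|=d$. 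The lower central series $P>P_2>P_3=Z(P)>1$ has all three factors of order $q$, and $\langle\tau\rangle$ acts on each as $\GF(q)^\ast$, hence irreducibly over $\GF(3)$. Two consequences are used repeatedly: the only $G_B$-invariant subgroups inside $P$ are $1,Z(P),P_2,P$, so $K:=Z(P)$ is the unique minimal normal subgroup of $G_B$ (of order $q$); and $A_B/P_2\cong\AGammaL(1,q)$. Finally $G_{B,B'}\cong\langle\tau\rangle\rtimes\langle\sigma^{e/d}\rangle$ has order $(q-1)d$, since $G_B$ is transitive on the remaining $n-1=q^3$ blocks. As $G_B$ is soluble, every $2$-transitive quotient $G_B^B$ is of affine type.

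Exactly as in the Suzuki case, $G_B$ cannot be faithful on $B$: otherwise $G_B^B$ would be affine with self-centralising socle by Lemma \ref{GBBaffine}, contradicting the fact that $P$ centralises $K=Z(P)$ while $P\ne K$. Hence $G_{(B)}\ne 1$ and $K\le G_{(B)}$. The heart of the argument is to determine the degree $m=|B|$, where $G_B^B=V\rtimes V_0$ is affine with $V=\soc(G_B^B)$ self-centralising of order $m=p_0^a$. I would first dispose of the case $P\le G_{(B)}$: then $G_B^B$ is a quotient of the metacyclic group $G_B/P\cong\langle\tau\rangle\rtimes\langle\sigma^{e/d}\rangle$, so $V$, being an elementary abelian subgroup of a metacyclic group, is cyclic; thus $V=C_\ell$ and $G_B^B=\AGL(1,\ell)$ for a prime $\ell=m$. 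Here $C_\ell=(G_B^B)'$ is the image of $(G_B/P)'=[\langle\tau\rangle,\langle\sigma^{e/d}\rangle]\le\langle\tau\rangle$, which is centralised by the abelian group $\langle\tau\rangle$; hence the image of $\langle\tau\rangle$ lies in $C_\ell$, and $V_0\cong C_{\ell-1}$ is a quotient of $\langle\sigma^{e/d}\rangle$, forcing $\ell-1\mid d$. Since $d\mid e$ is odd, this forces $\ell=2$, i.e.\ $m=2$.

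It remains to treat $P\not\le G_{(B)}$. If $p_0\ne 3$ then $V$ is an $\ell$-group with $\ell\ne 3$, and since $F(G_B^B)=V$ we get $O_3(G_B^B)=1$; but $\bar P:=PG_{(B)}/G_{(B)}$ is a normal $3$-subgroup, so $\bar P=1$, i.e.\ $P\le G_{(B)}$, contrary to assumption. Thus $p_0=3$ and $\bar P$ is a nontrivial normal $3$-subgroup of $G_B^B$, hence contains the unique minimal normal subgroup $V$. Now $P\cap G_{(B)}$ is a $G_B$-invariant subgroup of $P$ containing $K$ and different from $P$, so it equals $K$ or $P_2$. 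In the first case $\bar P\cong P/K$ is nonabelian with centre $P_2/K$ of order $q$; this centre is normal in $G_B^B$ and contains $V$, whence $\bar P\le C_{G_B^B}(V)=V$, impossible as $|\bar P|=q^2>q\ge|V|$. Therefore $P\cap G_{(B)}=P_2$, so $\bar P\cong P/P_2$ is elementary abelian of order $q$, and $\bar P\le C_{G_B^B}(V)=V\le\bar P$ gives $V=\bar P$ and $m=q$. Conversely both degrees occur: the quotient $G_B/P_2\le\AGammaL(1,q)$ contains $\AGL(1,q)$ and is $2$-transitive of degree $q$ (Line $2$), while $G_B$ surjects onto $\langle\tau\rangle\rtimes\langle\sigma^{e/d}\rangle$, whose abelianisation has even order, giving a subgroup of index $2$ and hence the degree-$2$ action (Line $3$). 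This establishes the first assertion.

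Finally, neither action has rank $3$, by the divisibility condition (C) of Lemma \ref{properties}, since $|G_{B,B'}|=(q-1)d$. For $m=q$ one would need $q^2=3^{2e}$ to divide $(q-1)d$, which is impossible because the $3$-part of $(q-1)d$ divides $d\le e<3^{2e}$. For $m=2$ one would need $4\mid(q-1)d$; but $q=3^e\equiv 3\pmod 4$ (as $e$ is odd) gives $v_2(q-1)=1$, and $d$ is odd, so $v_2((q-1)d)=1$. I expect the main obstacle to be the degree determination, and in particular the case $P\not\le G_{(B)}$, where one must combine the self-centralising socle condition with the precise lattice $\{1,K,P_2,P\}$ of $G_B$-invariant subgroups of $P$ to force $m=q$; the complementary parity argument ($\ell-1\mid d$ with $d$ odd) is what cleanly eliminates every affine degree other than $q$ and $2$.
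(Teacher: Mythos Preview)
Your argument tracks the paper's closely: the same structural description of the Borel subgroup, the same use of Lemma~\ref{GBBaffine} to rule out the faithful case, and the same dichotomy $|B|\in\{q,2\}$ coming from the chain $1<Z(P)<P_2<P$ of $G_B$-invariant subgroups of $P$. There is, however, one step whose justification fails as written. The assertion ``$V$, being an elementary abelian subgroup of a metacyclic group, is cyclic'' is false in general: $C_p\times C_p$ is metacyclic and is itself an elementary abelian non-cyclic subgroup. What you need instead (and what the paper uses) is the parity observation: since $d\mid e$ is odd while the $2$-transitive group $G_B^B$ has even order, the image $\langle\bar\tau\rangle$ of $\langle\tau\rangle$ in $G_B^B$ must be nontrivial; being cyclic and normal, it then contains the unique minimal normal subgroup $V$, so $V$ is cyclic and $m$ is prime. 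Your subsequent deduction that the image of $\langle\tau\rangle$ equals $V$ and hence that $\ell-1\mid d$ then goes through unchanged and forces $\ell=2$.

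Two minor further points. For $m=2$ you eliminate rank~$3$ via property~(C), using $4\nmid (q-1)d$ since $q\equiv 3\pmod 4$ and $d$ is odd; the paper instead uses property~(D) (a cyclic Sylow $2$-subgroup in $G_{B,B'}$ gives at most one index-$2$ subgroup). Both are valid, and yours is arguably the cleaner route. Also, $\langle\tau\rangle$ does not act on the middle factor $P_2/Z(P)$ literally ``as $\GF(q)^\ast$'' (it has two equal-length orbits on the nonidentity elements there); it does however act irreducibly over $\GF(3)$, which is all your lattice argument requires.
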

\proof
We have $T=\Ree(q)$ with $q=3^e$ and $e$ odd, $e\geq 3$, $T\leq G\leq\Aut(T)=A$ and  $A_B=(Q\rtimes\langle \tau\rangle)\rtimes\langle \sigma\rangle$, where
$|Q|=q^3$, $|\tau|=q-1$ and $|\sigma|=e$. Moreover, $|Z(Q)|=q$, $|Q'|=q^2$ and $\langle
\tau\rangle$ acts irreducibly on the elementary abelian 3-groups $Z(Q)$, $Q'/Z(Q)$ and
$Q/Q'$. Furthermore, $\tau$ acts transitively on the nontrivial elements of $Z(Q)$ and
$Q/Q'$, while having two equal length orbits on the nontrivial elements of $Q/Z(Q)$. See
\cite{KLM,Ree,ward} for further details. We can choose another block $B'$ such that
$G_{B,B'}= (\langle\tau\rangle\rtimes\langle \sigma\rangle)\cap G$.

Now $G=T\rtimes\langle \sigma^{e/d}\rangle$ for some divisor $d$ of $e$ and $G_B=A_B\cap
G$, which contains $T_B=Q\rtimes\langle \tau\rangle$. Given the way that $\tau$ acts on
$Q$, it follows that the nontrivial normal subgroups of $G_B$ contained in $Q$ are
$Z(Q),Q'$ and $Q$.

Suppose first that $G_B$ acts faithfully on $B$.
 Then $G_B^B$ is affine with minimal normal subgroup $Z(Q)$ and the fact that $Q$
centralises $Z(Q)$ contradicts Lemma \ref{GBBaffine}.   Thus $Z(Q)\leq G_{(B)}$. If $Z(Q)=G_{(B)}$ then $Q'/Z(Q)$ is the
unique minimal normal subgroup of $G_B^B$. However, $Q'/Z(Q)$ is central in $Q/Z(Q)$ which is a contradiction, 
so $Q'\leq G_{(B)}$. If $G_{(B)}=Q'$ then $G_B^B\leq \AGammaL(1,q)$ and contains
$\AGL(1,q)$. In this case, the only faithful 2-transitive action of $G_B^B$ is of degree $q$. Thus
$|B|=q$, as in  Line 2 of Table \ref{bigtable2}. 
If this $G$-action had rank 3, then by property (C), we would have $q^2$ dividing $|G_{B,B'}|$, which is not the case.
 If $G_{(B)}\neq Q'$ then $Q\leq G_{(B)}$. Since $e$ is odd and the order of the 2-transitive group $G_B^B$ is even, it follows that $\langle\tau\rangle$
induces a nontrivial cyclic group on $B$; and as $G_B^B$ is 2-transitive, the degree
$m=|B|$ is a prime dividing $q-1$ such that $m-1$ divides $e$. Hence $|B|=2$, as in  Line 3 of Table \ref{bigtable2}, and
$G_B^B=C_2$. 
 If this $G$-action had rank 3, then by property (D), $G_{B,B'}$  would have at least two non-isomorphic actions on 2 points, which is not the case.
\qed\medskip

We next consider the groups with socle $\PSU(3,q)$
\begin{remark}[\cite{ONan}]\label{PSUdescr}{\rm
 Suppose that $T=\PSU(3,q)\leq G\leq \PGammaU(3,q)=A$, with $A$ acting on $\calB$ of size $q^3+1$, where $q=p^e$ with $p$ a prime and $e\geq 1$, $q>2$ and let $B\in\calB$. The stabiliser $A_B$ has a normal subgroup $Q$ of order $q^3$ and contains
elements $\tau,\sigma$ of order $q^2-1$ and $2e$, respectively, such that $T_B=Q\rtimes
\langle \tau^{(3,q+1)}\rangle\leq G_B\leq (Q\rtimes \langle \tau\rangle)\rtimes\langle
\sigma\rangle=A_B$. More precisely, $G_B=A_B\cap G$. Moreover, $T_B$ has a unique minimal
normal subgroup $K$, which is contained in and centralised by $Q$, $|K|=q$, $Q/K$ is elementary abelian of
order $q^2$ and $A_B/K\cong\AGammaL(1,q^2)$.}
\end{remark}

\begin{prop}\label{prop:PSU}
Let $G$ be an imprimitive permutation group on $\Omega$ with system of imprimitivity $\calB$ such that  $G\cong G^\calB$   is as in (g) of Theorem $\ref{2transgroupsAS}$.
Then $G$ satisfies Hypothesis $\ref{mainhypo}$ if and only if
 $G$, $n$ and $|B|$ are as in Lines $4$, $5$, or $6$ of Table $\ref{bigtable2}$, with column $5$ giving additional conditions on $G$ or $|B|$.
Moreover, none of these group actions has rank $3$.
\end{prop}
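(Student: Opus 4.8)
The plan is to follow the template established by the preceding propositions for cases (e) and (f): first determine all ways the group $G$ can satisfy Hypothesis~\ref{mainhypo}, using the structural description of the block stabiliser $A_B$ from Remark~\ref{PSUdescr}, and then rule out rank~$3$ in each case by checking one of the necessary conditions (C), (D), (E), (F) from Lemma~\ref{properties}.

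First I would set up the data from Remark~\ref{PSUdescr}: we have $T=\PSU(3,q)\leq G\leq A=\PGammaU(3,q)$ acting on $\calB$ of size $q^3+1$, with $G_B=A_B\cap G$ containing $T_B=Q\rtimes\langle\tau^{(3,q+1)}\rangle$, where $Q$ has order $q^3$, the minimal normal subgroup $K$ of $T_B$ has order $q$ and is centralised by $Q$, and $A_B/K\cong\AGammaL(1,q^2)$. The block $B'$ can be chosen so that $G_{B,B'}$ is contained in the complement $\langle\tau,\sigma\rangle$. I would then determine the faithful $2$-transitive actions of $G_B$ on $B$ by analysing the normal subgroups of $G_B$ lying in $Q$. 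As in the Suzuki and Ree cases, if $G_B$ acted faithfully on $B$ then $G_B^B$ would be affine with minimal normal subgroup $K$, and since $Q$ centralises $K$ this contradicts Lemma~\ref{GBBaffine}; hence $K\leq G_{(B)}$. This forces $G_B^B$ to be a faithful $2$-transitive quotient of $A_B/K\cong\AGammaL(1,q^2)$, so $G_B^B$ lies between $\AGL(1,q^2)$ and $\AGammaL(1,q^2)$ (giving the degree $q^2$ case, Line~$4$), or else $Q\leq G_{(B)}$ and $\langle\tau\rangle$ induces a cyclic group on $B$, forcing $|B|=m$ to be a prime dividing $q^2-1$ with $\ord(p\bmod m)=m-1$ (Lines~$5$ and~$6$, the prime $m$ and the degree $2$ subcase). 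The extra conditions in column~$5$ of Table~\ref{bigtable2} record exactly when such subgroups $G$ with these faithful $2$-transitive actions actually exist.

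Having established that the Hypothesis~\ref{mainhypo} cases are precisely Lines~$4$, $5$, $6$, I would then eliminate rank~$3$. For Line~$4$, with $|B|=m=q^2$, rank~$3$ would require $m^2=q^4\mid |G_{B,B'}|$ by property~(C); but $G_{B,B'}\leq\langle\tau,\sigma\rangle$ has order dividing $(q^2-1)\cdot 2e$, which is not divisible by $q^4$, so (C) fails. For Line~$5$, with $|B|=m$ an odd prime, I expect property~(D) or~(C) to fail: since $G_{B,B'}$ embeds in $\GammaL(1,q^2)$ it is metacyclic, and I would check that it does not admit two non-isomorphic transitive actions of degree~$m$, contradicting~(D). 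For Line~$6$, with $|B|=2$, rank~$3$ would force $G_{B,B'}$ to have two non-isomorphic transitive actions on $2$ points, which is impossible as in the Ree case, so~(D) fails again.

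The main obstacle will be the bookkeeping in the Hypothesis~\ref{mainhypo} step: correctly identifying which subgroups of the metacyclic complement $\langle\tau\rangle\rtimes\langle\sigma\rangle$ survive modulo $K$ (respectively modulo $Q$) to give genuinely faithful $2$-transitive actions, and matching these against the existence conditions listed in column~$5$ (such as $G\cap\langle\tau,\sigma\rangle\leq\GammaL(1,q^2)$ being transitive on $\GF(q^2)^*$ for Line~$4$, the arithmetic condition on $m$ for Line~$5$, and the parity condition on $|G/(G\cap\PGU(3,q))|$ for Line~$6$). Once these cases are pinned down, the rank~$3$ eliminations are short divisibility or action-counting arguments, so I do not expect difficulty there; the delicate part is verifying that the group-theoretic description of the faithful quotients of $A_B/K$ is exhaustive and correctly constrained.
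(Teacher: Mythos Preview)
Your approach is correct in outline and matches the paper closely for Lines~4 and~5, but there is a genuine gap in your treatment of Line~6 (the case $|B|=2$).

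You claim that property~(D) fails ``as in the Ree case'', i.e.\ that $G_{B,B'}$ cannot have two non-isomorphic transitive actions on $2$ points. This is not true in general for $\PSU(3,q)$. In the Ree case the field automorphism $\sigma$ has odd order $e$, so a Sylow $2$-subgroup of $G_{B,B'}\leq\langle\tau\rangle\rtimes\langle\sigma\rangle$ lies inside the cyclic group $\langle\tau\rangle$ and is therefore cyclic, forcing a unique subgroup of index~$2$. But for $\PSU(3,q)$ the automorphism $\sigma$ has order $2e$, and when $q$ is odd and $|G/(G\cap\PGU(3,q))|$ is even the Sylow $2$-subgroup of $G_{B,B'}$ need not be cyclic; indeed $G_{B,B'}$ can then have three distinct index-$2$ subgroups, so property~(D) is satisfied and your argument stalls.

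The paper handles this by first disposing of the cyclic-Sylow subcases via~(D) as you suggest, and then, for $q$ odd with $|G/H|$ even, tabulating the three possible choices of $G_{(B)}$ inside $G_B$. Two of these are eliminated by property~(F) (they arise as $G_B\cap N$ for a normal subgroup $N\lhd G$). For the remaining choices one needs an explicit element $g\in\PSU(3,q)$ interchanging $B$ and $B'$ with $\tau^g=\tau^{-q}$ and $\sigma^g=\sigma$; a direct computation then shows $G_{(B'),B}=(G_{(B),B'})^g=G_{(B),B'}$, so that $G_{(B),B'}$ fixes $B\cup B'$ pointwise and property~(A) fails. You will need this extra machinery---the conjugating element $g$ and the case analysis of index-$2$ subgroups---to close the argument for Line~6.
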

\proof
Let $T=\PSU(3,q)\leq G\leq \PGammaU(3,q)=A$ and $B$ be a block of $\calB$. We use the notation of Remark \ref{PSUdescr}. 
 We can choose another
block $B'$ such that $G_{B,B'}= (\langle\tau\rangle\rtimes\langle \sigma\rangle)\cap
G$. Moreover, there is an element $g\in \PSU(3,q)$ interchanging $B$ and $B'$ such that $\tau^g=\tau^{-q}$ and
$\sigma^g=\sigma$. Indeed, using the notation of \cite[p.249]{DM}, with $B=\langle e_1\rangle$, $B'=\langle e_3\rangle$, then $\tau=h_{\gamma,1}$ for some primitive element $\gamma$ of $\GF(q^2)$ and
 $g$ is the map $(x_1,x_2,x_3)\mapsto (x_3,-x_2,x_1)$. It is straightforward to check that $g$ preserves the hermitian form $\phi$ in \cite[p.249]{DM}, that $\Det(g)=1$ and $g^2=1$.

Suppose first that $G_B$ acts faithfully on $B$. Then $G_B^B$ is affine with minimal normal subgroup $K$ and the fact that $Q$
centralises $K$ contradicts Lemma \ref{GBBaffine}. 
 Hence $G_B$ is
unfaithful on $B$ and $K^B=1$. Suppose first that $G_B^B=G_B/K\leq\AGammaL(1,q^2)$. Then
$|B|=q^2$ since the only faithful primitive action of this group is on $q^2$ points. This action is $2$-transitive provided that $G\cap(\langle \tau\rangle\rtimes\langle \sigma\rangle)$ is a
subgroup of $\GammaL(1,q^2)$ which acts transitively on the set of nonzero elements of
$\GF(q^2)$. Thus Line 4 of Table \ref{bigtable2} holds. If this $G$-action had rank 3, then by property (C), we would have $|B|^2=q^4$ dividing $|G_{B,B'}|$, which is not the case. 
Suppose now that 
$K<G_{(B)}$. Since $q>2$, $G_B$ acts irreducibly on $Q/K$  and $Q/K$ is the unique minimal normal subgroup of $G_B/K$. 
Thus $Q\leq G_{(B)}$. Since
$G_B^B$ is 2-transitive, it follows that either
   \begin{enumerate}
     \item $|B|=m$ is an odd prime dividing $q^2-1$ such that the order of $p\pmod m$, denoted by $\ord(p \mod m)$, is $m-1$, and
$G_B^B\cong C_m\rtimes C_{m-1}\cong \AGL(1,m)$, or
     \item $m=2$, $|G_B/Q|$ is even and $G_B^B=C_2$.
    \end{enumerate}

Assume we are in case (1). Thus Line 5 of Table \ref{bigtable2} holds. Suppose that this $G$-action has rank 3.
Then $G_{B,B'}= (\langle\tau\rangle\rtimes\langle \sigma\rangle)\cap G$ has a
unique transitive representation of degree $m$, contradicting property (D).

Assume we are in case (2). Let $H=G\cap\PGU(3,q)$.
 Since  $|G_B/Q|$ is even, we either have that $q$ is odd, or that $|G/H|$ is even, or both. 
 Thus Line 6 of Table \ref{bigtable2} holds. Suppose that this $G$-action has rank 3.
 If either $q$ is even or $|G/H|$ is odd, then a Sylow $2$-subgroup of $G_{B,B'}$ is cyclic and hence $G_{B,B'}$ has at most one transitive representation on 2 points, so property (D) is not satisfied. Thus $q$ is odd and $|G/H|$ is even, and so $|G/H|=2e/i$ for some divisor $i$ of $e$. Then $G_B$, $G_{B,B'}$, $G_{(B)}$ and $G_{(B),B'}$ are as in one of the lines of Table \ref{PSU-2}. In lines (1.b), (1.c), (2.b), (2.c), there are additional conditions on $p$ and $i$  for $G_{(B)}$ to have index 2 in $G_B$ (but we shall not need them here).

\begin{table}
\begin{center}
\begin{tabular}{|l|l|l|l|l|}
\hline 
$G_B$& $G_{B,B'}$&$G_{(B)}$&$G_{(B),B'}$&\\
\hline
$ Q\rtimes\la\tau^3,\mu\ra$& $\la\tau^3,\mu\ra$&$ Q\rtimes\la\tau^3,\mu^{2}\ra$& $\la\tau^3,\mu^{2}\ra$&(1.a)\\
where $\mu=\sigma^i$, $\sigma^i\tau$ or $\sigma^i\tau^2$& &$ Q\rtimes\la\tau^6,\mu\ra$& $\la\tau^6,\mu\ra$&(1.b)\\
and $3$ divides $q+1$ &&$ Q\rtimes\la\tau^6,\mu\tau^3\ra$& $\la\tau^6,\mu\tau^3\ra$&(1.c)\\
\hline
$(Q\rtimes\la\tau\ra)\rtimes\la\sigma^i\ra$&$\la\tau\ra \rtimes \la\sigma^i\ra$ &$(Q\rtimes\la\tau\ra)\rtimes\la\sigma^{2i}\ra$&  $\la\tau\ra\rtimes\la\sigma^{2i}\ra$&(2.a)\\
&& $(Q\rtimes\la\tau^2\ra)\rtimes\la\sigma^i\ra$& $\la\tau^2\ra\rtimes\la\sigma^{i}\ra$&(2.b)\\
&& $ Q\rtimes\la\tau^2,\sigma^i\tau\ra$& $\la\tau^2,\sigma^{i}\tau\ra$&(2.c)\\
\hline
\end{tabular}\\
\caption{Possibilities for the proof of Proposition \ref{prop:PSU}} \label{PSU-2}
\end{center}
\end{table}

If $G= \langle H,\mu\rangle$, where $\mu=\sigma^i, \sigma^i\tau$ or
$\sigma^i\tau^2$, for some $i$ dividing $e$, then $\langle H,\mu^2
\rangle$ is normal in $G$, and so, by property (F), $G_{(B)}$ cannot be equal to $\langle H,\mu^2
\rangle\cap
G_B$. This eliminates cases (1.a) and (2.a).

In all other cases $G_{(B),B'}=\langle\tau^{2\ell},\sigma^{i}\tau^k\rangle$ for some $k\in [0,2\ell-1]$ where $\ell=3$ in cases (1.b) and (1.c), and $\ell=1$ in cases (2.b) and (2.c).
Recall that $g$ interchanges $B$ and $B'$, $\tau^g=\tau^{-q}$ and $\sigma^g=\sigma$.  Thus  $G_{(B'),B}= (G_{(B),B'})^g$ is generated by $(\tau^{2\ell})^g=\tau^{-2\ell q}$ and $(\sigma^{i}\tau^k)^g=\sigma^i\tau^{-kq}$. Since $\tau^{-2\ell q}\in\langle\tau^{2\ell}\rangle$ and $\tau^{2\ell}=(\tau^{-2\ell q})^{-q}\in\langle\tau^{-2\ell q}\rangle$, we have  $\langle\tau^{2\ell}\rangle=\langle(\tau^{2\ell})^g\rangle$. Moreover, $\sigma^i\tau^{-kq}=(\sigma^i\tau^{k})\tau^{-k(q+1)}\equiv \sigma^i\tau^{k}$ modulo $ \langle\tau^{2\ell}\rangle$ (since $2\ell$ divides $q+1$ in both cases). It follows that  $G_{(B'),B}= G_{(B),B'}$ and hence $G_{(B),B'}$ fixes $B\cup B'$ pointwise and property (A) fails. Thus this $G$-action does not have rank 3.
\qed\medskip

We next consider the groups with socle $\PSL(2,q)$. 
\begin{remark}\label{PSLdescr}{\rm
Let $T=\PSL(2,q)\leq G\leq \PGammaL(2,q)=A$, with $A$ acting on $\calB$ of size $q+1$, where $q=p^e\geq 4$ with $p$ a prime and $e\geq 1$, and let $B\in\calB$. Then $G_B=((Q\rtimes\la \tau\ra)\rtimes\la \sigma\ra)\cap G$ where $Q$ is elementary abelian of order $q$,  $\tau$ is of order $q-1$ and $\sigma$ is of order $e$. Let $i=|G/(G\cap \PGL(2,q))|$, then $i$ divides $e$.}
\end{remark}

\begin{prop}\label{PSL2}
Let $G$ be an imprimitive permutation group on $\Omega$ with system of imprimitivity $\calB$ such that  $G\cong G^\calB$   is as in (h) of Theorem $\ref{2transgroupsAS}$ for $a=2$.
Then $G$ satisfies Hypothesis $\ref{mainhypo}$ if and only if
 $G$, $n$ and $|B|$ are as in Lines $7$, $8$, or $9$ of Table $\ref{bigtable2}$, with column $5$ giving additional conditions on $G$ or $|B|$. 
Moreover, $G$ has rank $3$ on $\Omega$ if and only if the following conditions hold (see Remark \ref{PSLdescr} for the notation):
\begin{itemize}
\item $|B|=2$, $q\equiv 1 \pmod 4$,
\item  $G=\la\PSL(2,q),\sigma^i\tau\ra$, where $i$ divides  $e$, $e/i$ is even, and either $p^i\equiv 3\pmod 4$, or $p^i\equiv 1\pmod 4$ and $e/i\equiv 0 \pmod 4$,
\item $G_{(B)}= Q\rtimes\la\tau^4,\sigma^i\tau^k\ra$ with $k=1$ or $3$.
\end{itemize}
Moreover, the two actions (for $k=1$ or $3$) are not isomorphic.
\end{prop}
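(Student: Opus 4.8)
\emph{Proof plan.} I would follow the template of the preceding propositions, using the notation of Remark~\ref{PSLdescr}: here $G_B=(Q\rtimes\la\tau\ra\rtimes\la\sigma\ra)\cap G$ with $Q$ elementary abelian of order $q$ and normal in $G_B$, and $\tau$ acts fixed-point-freely on $Q\setminus\{1\}$. Consequently $Q$ is the \emph{unique} minimal normal subgroup of $G_B$ and is self-centralising. To get the ``satisfies Hypothesis'' equivalence I would determine the $2$-transitive actions of $G_B$ on $B$. If $G_B$ acts faithfully, then $G_B^B$ is affine with regular minimal normal subgroup $Q$, so $|B|=q$; here Lemma~\ref{GBBaffine} is \emph{not} contradicted (in contrast to the $\Sz$, $\Ree$ and $\PSU$ cases, where the centre of $Q$ was centralised but strictly smaller than the minimal normal subgroup), and this action is $2$-transitive exactly when $G$ is $3$-transitive on $\calB$, giving Line~$7$. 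If $G_B$ is unfaithful, then minimality of $Q$ forces $Q\le G_{(B)}$, so $G_B^B$ is an affine $2$-transitive quotient of the metacyclic group $G_{B,B'}=(\la\tau\ra\rtimes\la\sigma\ra)\cap G\cong G_B/Q$; such a quotient is $C_m\rtimes C_{m-1}$ with $m$ prime, $m\mid(q-1)$ and $\ord(p\bmod m)=m-1$. This yields Line~$8$ when $m\ge 3$ and Line~$9$ when $m=2$, the column-$5$ conditions on Line~$9$ recording exactly when $G_{B,B'}$ possesses an index-$2$ subgroup.

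For the rank $3$ question I would first discard Lines~$7$ and~$8$. Line~$7$ violates property~(C), since $|G_{B,B'}|\le (q-1)e<q^2=|B|^2$. Line~$8$ violates property~(D), because the metacyclic group $G_{B,B'}$ has a unique transitive representation of degree $m$, exactly as in the $\Ree$ and $\PSU$ arguments. This leaves Line~$9$, which is the genuinely new part.

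On Line~$9$ the point stabiliser equals the block kernel $G_{(B)}$, an index-$2$ subgroup of $G_B$ containing $Q$, so $G_{(B)}=Q\rtimes H$ with $H\le G_{B,B'}$ of index $2$. Since $Q$ is regular on $\calB\setminus\{B\}$, property~(B) is automatic, so by Lemma~\ref{properties} rank $3$ reduces to property~(A), i.e.\ to $G_{B,B'}$ being transitive on $B\times B'$. Exactly as in Proposition~\ref{prop:PSU} I would take an involution $g\in\PSL(2,q)$ interchanging $B$ and $B'$ with $\tau^g=\tau^{-1}$ and $\sigma^g=\sigma$; then the pointwise stabiliser of $B'$ in $G_{B,B'}$ is $H^g$, and (A) holds iff $H\ne H^g$. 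Thus rank $3$ is equivalent to the existence of a non-$g$-invariant index-$2$ subgroup of $G_{B,B'}$, equivalently to $g$ acting non-trivially on $\mathrm{Hom}(G_{B,B'},C_2)$. Letting $t$ generate the torus part $G\cap\la\tau\ra$ (this is $\la\tau\ra$ if $G\cap\PGL(2,q)=\PGL(2,q)$, and $\la\tau^2\ra$ if $G\cap\PGL(2,q)=\PSL(2,q)$) and $s$ be the field-automorphism generator $\sigma^i\tau^{\,j}$, the relations give $\phi^g(t)=\phi(t)$ and $\phi^g(s)=\phi(s)+j\,\phi(t)$ for every $\phi\in\mathrm{Hom}(G_{B,B'},C_2)$; hence $\phi$ is non-$g$-invariant precisely when $j$ is odd and $\phi(t)=1$. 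I would then read off the three requirements for such $\phi$ to exist: (i) the twist $j$ must have a well-defined odd parity, which needs $t=\tau^2$, i.e.\ $G\cap\PGL(2,q)=\PSL(2,q)$, realised by $G=\la\PSL(2,q),\sigma^i\tau\ra$ (any odd $j$ gives the same group, as $\tau^{j-1}\in\PSL(2,q)$); since this $G$ has $G\cap\PGL(2,q)=\PSL(2,q)$ iff $(\sigma^i\tau)^{e/i}\in\PSL(2,q)$, this forces $e/i$ even; (ii) $|t|=(q-1)/2$ must be even, so that $\phi(t)=1$ is admissible, i.e.\ $q\equiv1\pmod4$; and (iii) the relation $s^{e/i}=t^{c}$ must be compatible with $\phi(t)=1$, which since $e/i$ is even forces $c$ even. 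A short computation with $s^{e/i}=\tau^{S}$, $S\equiv(\text{unit})\cdot\frac{q-1}{p^i-1}\pmod{q-1}$ and $2c\equiv S$, shows $c$ is even iff $S\equiv0\pmod4$ iff $\frac{q-1}{p^i-1}\equiv0\pmod4$, which under $e/i$ even is exactly ``$p^i\equiv3\pmod4$, or $p^i\equiv1\pmod4$ and $e/i\equiv0\pmod4$''. When (i)--(iii) hold there are precisely two non-$g$-invariant homomorphisms, distinguished by $\phi(s)=0$ or $1$, and their kernels are $H_k=\la\tau^4,\sigma^i\tau^k\ra$ for $k=1,3$, giving $G_{(B)}=Q\rtimes\la\tau^4,\sigma^i\tau^k\ra$.

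Finally, to see the two actions ($k=1,3$) are non-isomorphic I would show $G_{(B)}^{(1)}$ and $G_{(B)}^{(3)}$ are not conjugate in $G$. Both are normal of index $2$ in $G_B$, so any conjugating element must send $B$ to a different block $B''$; the common image would then be normal in $\la G_B,G_{B''}\ra=G$ (as $G_B$ is maximal, $G$ being primitive on $\calB$). But $G$ is quasiprimitive on $\Omega$ by Corollary~\ref{qp}, so a non-trivial normal subgroup of $G$ cannot lie inside a point stabiliser; this contradicts $G_{(B)}^{(3)}$ being a point stabiliser. I expect the main obstacle to be step~(iii): the modular bookkeeping that converts ``$c$ even'' into the stated congruences on $p^i$ and $e/i$ modulo $4$, together with checking that the torus part of $G$ is $\la\tau^2\ra$ precisely when $e/i$ is even --- these are the points where the exact hypotheses of the statement are forced.
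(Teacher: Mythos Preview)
Your argument is correct and tracks the paper's proof in its overall architecture: the same three lines of Table~\ref{bigtable2} arise for the same reasons, Lines~7 and~8 are discarded by properties~(C) and~(D) respectively, and the Line~9 analysis hinges on the same involution $g$ with $\tau^g=\tau^{-1}$, $\sigma^g=\sigma$, reducing~(A) to $G_{(B),B'}\ne G_{(B'),B}$.

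Where you diverge is in the execution of the Line~9 case. The paper tabulates the possible $G_B$, $G_{(B)}$, $G_{(B),B'}$ explicitly (Table~\ref{PSL-2}), first invoking property~(F) to kill the cases $G_{(B)}=G_B\cap\langle H,\mu^2\rangle$, and then checking $g$-invariance of $G_{(B),B'}=\langle\tau^{2\ell},\sigma^i\tau^k\rangle$ case by case. Your $\mathrm{Hom}(G_{B,B'},C_2)$ formulation handles all of this uniformly: the formula $\phi^g(s)=\phi(s)+j\phi(t)$ immediately shows that when $t=\tau$ (i.e.\ $G\ge\PGL(2,q)$) or $j=0$ every $\phi$ is $g$-invariant, and that when $t=\tau^2$, $j=1$ the non-invariant $\phi$ are exactly those with $\phi(t)=1$. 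In particular the cases the paper removes via~(F) are simply the $g$-invariant characters with $\phi(t)=0$, so your argument shows that~(F) is redundant here. This is a genuine streamlining. Your non-isomorphism argument is also different: the paper observes that property~(B) forces any conjugating element into $G_B$, where normality gives a contradiction; you instead let the conjugate be normal in $\langle G_B,G_{B''}\rangle=G$ and invoke quasiprimitivity. Both are valid; the paper's is marginally more self-contained since it avoids appealing to Corollary~\ref{qp}.
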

\proof
Let $T=\PSL(2,q)\leq G\leq \PGammaL(2,q)=A$ and $B$ be a block of $\calB$. We use the notation of Remark \ref{PSLdescr}. 
We can choose another block $B'$ such that $G_{B,B'}= (\la\tau\ra\rtimes\la \sigma\ra)\cap G$. Also there is an element $g\in \PSL(2,p)\leq \PSL(2,q)$ interchanging $B$ and $B'$ and such that $\sigma^g=\sigma$ and $\tau^g=\tau^{-1}$. Indeed, working in $\GammaL(2,q)$ acting on the vector space $V(2,q)=\langle e_1,e_2\rangle$, we may take $B=\langle e_1\rangle$, $B'=\langle e_2\rangle$, $\tau:(x_1,x_2)\mapsto (x_1,\gamma x_2)$ for some primitive element $\gamma$ of $\GF(q)$, and $g$ to be the map $(x_1,x_2)\mapsto (-x_2,x_1)$.

If $G_B$ acts faithfully on $B$ then $Q$ is regular on $B$ and $|B|=q$. 
Since $G_B^B$ is 2-transitive it follows that  $G\cap(\la \tau\ra\rtimes\la \sigma\ra)$ is a subgroup of
$\GammaL(1,q)$ which acts transitively on the set of nonzero elements of
$\GF(q)$, in other words $G$ is 3-transitive, and Line 7 of Table \ref{bigtable2} holds. 
 If this $G$-action had rank 3, then by property (C), we would have $|B|^2=q^2$ dividing $|G_{B,B'}|$, which is not the case.

Assume now that $G_B$ acts unfaithfully on $B$. Then, as $Q$ is the unique minimal normal subgroup of $G_B$,  $Q\leq G_{(B)}$. 
Thus either $|B|=m$, where $m$ is an odd prime dividing $q-1$ such that the order of $p \pmod m$ is $m-1$, and $G_B^B\cong C_m\rtimes C_{m-1}\cong \AGL(1,m)$, or $|B|=2$, $|G_B/Q|$ is even and $G_B^B=C_2$. Note that $|G_B/Q|$ even holds if and only if either $q\equiv 1 \pmod 4$, or  $q\equiv 3\pmod 4$ and $G\geq \PGL(2,q)$, or $|G/(G\cap \PGL(2,q))|$ is even. Thus  so Line 8 or 9 of Table \ref{bigtable2} holds. 

Assume we are in the first case and that the $G$-action has rank 3.
Then $G_{B,B'}= (\la\tau\ra\rtimes\la \sigma\ra)\cap G$ has a unique transitive representation of degree $m$, contradicting (D). Hence the action does not have rank 3.

Assume now that we are in the second case, that is $|B|=2$, and that the $G$-action has rank 3.
Let $H=G\cap\PGL(2,q)$. Then either $q$ is odd or $|G/H|$ is even, or both.
By property (D),  $G_{B,B'}$ has a normal subgroup $K$ such that  $G_{B,B'}/K\cong C_2\times C_2$. In particular, the Sylow $2$-subgroup of $G_{B,B'}$ is not cyclic and hence
 both $q-1$ and $|G/H|$ are even. Hence $e$ is even, and so $q\equiv 1 \pmod 4$. Note that  $|G/H|=e/i$ for some divisor $i$ of $e$ such that $e/i$ is even.
 More precisely, $G_B$, $G_{B,B'}$, $G_{(B)}$ and $G_{(B),B'}$ are as in one of the lines of Table \ref{PSL-2}. In lines (1.b), (1.c), (2.b), (2.c), there are additional conditions on $p$ and $i$  for the subgroup $G_{(B)}$ to have index 2 in $G_B$ (the conditions we need will be described below).

\begin{table}
\begin{center}
\begin{tabular}{|l|l|l|l|l|}
\hline 
$G_B$& $G_{B,B'}$&$G_{(B)}$&$G_{(B),B'}$&\\
\hline
$ Q\rtimes\la\tau^2,\mu\ra$& $\la\tau^2,\mu\ra$&$ Q\rtimes\la\tau^2,\mu^{2}\ra$& $\la\tau^2,\mu^{2}\ra$&(1.a)\\
where $\mu=\sigma^i$, or $\sigma^i\tau$& &$ Q\rtimes\la\tau^4,\mu\ra$& $\la\tau^4,\mu\ra$&(1.b)\\
                                       &&$ Q\rtimes\la\tau^4,\mu\tau^2\ra$& $\la\tau^4,\mu\tau^2\ra$&(1.c)\\
\hline
$(Q\rtimes\la\tau\ra)\rtimes\la\sigma^i\ra$&$\la\tau\ra \rtimes \la\sigma^i\ra$ &$(Q\rtimes\la\tau\ra)\rtimes\la\sigma^{2i}\ra$&  $\la\tau\ra\rtimes\la\sigma^{2i}\ra$&(2.a)\\
&& $(Q\rtimes\la\tau^2\ra)\rtimes\la\sigma^i\ra$& $\la\tau^2\ra\rtimes\la\sigma^{i}\ra$&(2.b)\\
&& $Q\rtimes\la\tau^2,\sigma^i\tau\ra$& $\la\tau^2,\sigma^{i}\tau\ra$&(2.c)\\
\hline
\end{tabular}\\
\caption{Possibilities for the proof of Proposition \ref{PSL2}} \label{PSL-2}
\end{center}
\end{table}

If $G=\langle H,\mu\rangle$, where $\mu=\sigma^i$ or $\sigma^i\tau$ for some $i$ dividing $e$, then $\langle H,\mu^2 \rangle$ is normal in $G$, and so, by property (F), $G_{(B)}$ cannot be equal to $\langle H,\mu^2
\rangle\cap
G_B$. This eliminates cases (1.a) and (2.a).

In all the other cases, $G_{(B),B'}=\langle\tau^{2\ell},\sigma^{i}\tau^k\rangle$ for some $k\in [0,2\ell-1]$ where $\ell=2$ in cases (1.b) and (1.c), and $\ell=1$ in cases (2.b) and (2.c).
Recall that $g$ interchanges $B$ and $B'$, $\tau^g=\tau^{-1}$ and $\sigma^g=\sigma$.  Thus  $G_{(B'),B}= (G_{(B),B'})^g$ is generated by $(\tau^{2\ell})^g=\tau^{-2\ell}$ and $(\sigma^{i}\tau^k)^g=\sigma^{i}\tau^{-k}$.
Obviously, we have  $\langle\tau^{2\ell}\rangle=\langle(\tau^{2\ell})^g\rangle$. 
If either $k=0$ or $k=\ell\in \{1,2\}$, then $\sigma^{i}\tau^{-k}=(\sigma^i\tau^{k})\tau^{-2k}\equiv \sigma^i\tau^{k}$ modulo $ \langle\tau^{2\ell}\rangle$. It follows that  $G_{(B'),B}= G_{(B),B'}$ and hence $G_{(B),B'}$ fixes $B\cup B'$ pointwise and property (A) fails.

Therefore we must have  $\ell=2$ and $k=1$ or $3$, that is, $G=\la \PSL(2,q),\sigma^i\tau\rangle$ and we are in case (1.b) or (1.c) respectively.
We have  $(\sigma^i\tau^j)^{(e/i)}=\tau^{jc}$, where $c=1+p^{i}+p^{2i}+\ldots +p^{(e/i-1)i}$ is even since $e/i$ is even, and hence $\tau^{c}$ is contained in $\la \tau^2\ra\leqslant \PSL(2,q)$. Thus $G$ does not contain $\PGL(2,q)$.
We must have that $G_{(B)}= Q\rtimes\la\tau^4,\sigma^i\tau^k\ra$ has index 2 in $G_B=Q\rtimes\la\tau^2,\sigma^i\tau\ra$, and so  $(\sigma^i\tau^k)^{(e/i)}=\tau^{kc}$ must be contained in  $\langle\tau^{4}\rangle$, that is, $4$ must divide $c$. Since $e/i$ is even, this happens exactly when $p^i\equiv 3\pmod 4$ or $p^i\equiv 1\pmod 4$ and $e/i\equiv 0 \pmod 4$. Hence all the stated conditions are satisfied.
Now assume $G$ satisfies these conditions. 
Then $G_{(B),B'}$ has index 2 in $G_{B,B'}$, so $G_{B,B'}$ is transitive on $B$.
Also $G_{(B'),B}= (G_{(B),B'})^g=\la\tau^4,\sigma^i\tau^{4-k}\ra\neq G_{(B),B'}$, which means that $G_{(B),B'}$ does not fix $B'$ pointwise and hence is transitive on $B'$. Therefore property (A) is satisfied.
 Since $Q\leq G_{(B)}$, we have that $G_{(B)}$ is transitive on the blocks distinct from
$B$, therefore property (B) is satisfied. By Lemma \ref{properties}, these examples have indeed a rank 3 action.

Moreover, these two actions (with $k=1$ and $k=3$) are not isomorphic. If they were isomorphic, then we would have $Q\rtimes\la\tau^4,\sigma^i\tau\ra$ and $Q\rtimes\la\tau^4,\sigma^i\tau^3\ra$ conjugate in $G$.
Since property (B) is satisfied in both cases, the only block-stabiliser they are contained in is $G_B$, and so these subgroups would be conjugate in $G_B$. These two subgroups have index 2 (hence are normal) in $G_B$, and so cannot be conjugate in $G_B$.
\qed\medskip

We next consider the groups with socle $\PSL(a,q)$ for $a>2$, and describe in the following remark the model we are going to use. It is followed by a technical lemma.
\begin{remark}\label{PSLadescr}{\rm
Let $T=\PSL(a,q)\leq G\leq \PGammaL(a,q)$, with $G$ acting on $\calB$ of size $\frac{q^a-1}{q-1}$, where $q=p^e$ with $p$ a prime and $e\geq 1$. 
Then $G$ is a subgroup of $\PGL(a,q) \rtimes \la \sigma^i\ra$, where $\sigma$ is the Frobenius automorphism and $i$ divides $e$. 
If $\bfZ$ is the set of scalar matrices in $\GL(a,q)$, then the elements of $\PGL(a,q)$ are the cosets $A \bfZ$, where $A$ is a matrix in $\GL(a,q)$. When convenient we will single out a representative for a coset.
Let $K=\{x^a|x \in \GF(q)^*\}$, a subgroup of $(\GF(q)^*,.)$ of order $(q-1)/(a,q-1)$. Then $\PSL(a,q)=\{A \bfZ |\Det(A)\in K\}$.
We have $H:=G\cap \PGL(a,q)=\{A\bfZ |\Det(A)\in F\}$, where $F$ is a subgroup of $(\GF(q)^*,.)$ containing $K$.
Let $\omega$ be a primitive element of $\GF(q)$. Then $\GF(q)^*=\la \omega\ra$ and  $K=\la \omega^{(a,q-1)}\ra$.
Let $d:=\min\{j>0|\omega^j\in F\}$. Then  $d$ divides $(a,q-1)$ and $F=\la \omega^d\ra$, as $(\GF(q)^*,.)$ is cyclic. Notice that $H$ has index $d$ in $\PGL(a,q)$.

Let $\tau$ be the diagonal $(a\times a)$-matrix with $\tau_{jj}=1$ for $j<a$ and $\tau_{aa}=\omega$. Then either $G=H$ or $G=\la H,  \sigma^i\tau^r\bfZ\ra$ for some $r\in \{0,\ldots,d-1\}$  since $\tau^d\bfZ\in H$ and $\PGL(a,q)=\la H,\tau\bfZ\ra$. We recall that $i$ dvides $e$.
For $k\geq 1$, $( \sigma^i\tau^r\bfZ)^k=\sigma^{ki} \tau^{r(1+p^{i}+\ldots +p^{(k-1)i})}\bfZ$, and so $(\sigma^i \tau^r\bfZ )^{e/i}=\tau^{r(1+p^{i}+\ldots p^{(e-i)})}\bfZ=\tau^{r\frac{q-1}{p^i-1}}\bfZ$. This element must be in $H$, and so its determinant must be in $\la\omega^d\ra$. Therefore we have that $d$ divides $r\frac{q-1}{p^i-1}$.

We identify $\calB$ with the set of $1$-dimensional subspaces of the vector space $\GF(q)^a$ of row vectors with standard basis $e_1,e_2,\ldots, e_a$.
Let $B=\la e_1\ra$.
Then $H_B$ consists of the elements $A\bfZ $ of $H$ such that $A_{1j}=0$ for all $j\geq 2$. Each coset $A\bfZ$ contains a unique representative $A$  with $A_{11}=1$.
More precisely, let $g_{\underline{b},X}=
\begin{pmatrix}
1 &   \underline{0}^T \\
\underline{b}  & X \\
 \end{pmatrix}
$, where $\underline{b},\underline{0}\in \GF(q)^{(a-1)\times 1}$ and $X\in \GL(a-1,q)$. Then $H_B=\{ g_{\underline{b},X}\bfZ|\Det(X)\in F\}$, and $G_B=\la H_B,\sigma^i \tau^r\bfZ\ra$. Thus $G_B$ is isomorphic to a subgroup between $\ASL(a-1,q)$ and $\AGammaL(a-1,q)$. }
\end{remark}

\begin{lemma}\label{PSL-technical}
Let $Z=\la y\ra\cong C_t$, $t>1$, and let $n$ be a positive integer. Let $Z_0=\la y^s\ra$, where $s$ is a divisor of $t$.
Then the number of $x\in Z$ such that $x^n\in Z_0$ is $t(n,s)/s$. 
\end{lemma}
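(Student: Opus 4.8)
The plan is to count in the quotient group $\bar{Z} := Z/Z_0$ rather than directly in $Z$. First I would record the basic structure: since $s$ divides $t$, the element $y^s$ has order $t/s$, so $Z_0 = \langle y^s \rangle$ is cyclic of order $t/s$ (indeed it is the unique subgroup of $Z$ of that order), and consequently $\bar{Z} = Z/Z_0$ is cyclic of order $s$. Let $\psi \colon Z \to \bar{Z}$ denote the natural projection; it is surjective with kernel $Z_0$, so every fibre $\psi^{-1}(w)$ has exactly $|Z_0| = t/s$ elements.

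The key reduction is that the condition $x^n \in Z_0$ is detected in the quotient: for $x \in Z$ we have $\psi(x)^n = \psi(x^n)$, which is trivial in $\bar{Z}$ precisely when $x^n \in Z_0$. Hence the set we wish to count is exactly $\psi^{-1}(R)$, where $R := \{ w \in \bar{Z} \mid w^n = 1 \}$ is the set of elements of $\bar{Z}$ whose order divides $n$. Because each fibre of $\psi$ has size $t/s$, we immediately get $|\psi^{-1}(R)| = (t/s)\,|R|$.

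It then remains to compute $|R|$, and here I would use the standard fact that in a cyclic group of order $s$ the equation $w^n = 1$ has exactly $(n,s)$ solutions: writing $\bar{Z} = \langle z \rangle$ with $z$ of order $s$ and $w = z^a$, the condition $z^{an}=1$ is equivalent to $s \mid an$, i.e. to $a$ being a multiple of $s/(n,s)$ modulo $s$, which leaves precisely $(n,s)$ admissible residues $a$. Combining this with the fibre count yields
\[
|\psi^{-1}(R)| = \frac{t}{s}\,(n,s) = \frac{t(n,s)}{s},
\]
as claimed.

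I do not expect any serious obstacle: the statement is elementary and the argument is entirely a matter of organising the count cleanly. The only points requiring mild care are the observation that $x^n \in Z_0$ is equivalent to $\psi(x)^n = 1$ (so that passing to $\bar{Z}$ loses no information), and the count of $n$-th roots of unity in a cyclic group of order $s$; both are routine. If one preferred to avoid the quotient entirely, the same result follows by writing $x = y^a$ and counting the $a \in \mathbb{Z}/t\mathbb{Z}$ with $s \mid an$ directly, but the quotient formulation makes the factor $t/s$ transparent.
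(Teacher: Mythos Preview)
Your proof is correct and takes a genuinely different route from the paper's. The paper works with the $n$-th power endomorphism $\varphi\colon Z\to Z$, $y^j\mapsto y^{jn}$: it identifies the image $Z_1=\langle y^{(n,t)}\rangle$, computes $Z_0\cap Z_1=\langle y^u\rangle$ with $u=\lcm\{s,(n,t)\}$ (using that $(s,(n,t))=(n,s)$ since $s\mid t$), and then counts preimages via the kernel size $(n,t)$. Your argument instead passes to the quotient $Z/Z_0\cong C_s$ and reduces everything to counting $n$-th roots of unity in a cyclic group of order $s$, which immediately gives the factor $(n,s)$; the fibre size $t/s$ then supplies the rest. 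Your approach is shorter and avoids the somewhat delicate lcm/gcd identity the paper needs; the paper's approach, on the other hand, stays inside $Z$ throughout and makes the image of the power map explicit, which can be convenient if one also wants to know which elements of $Z_0$ are actually hit. Both are entirely valid for this elementary lemma.
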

\proof 
Let $Z_1=\la y^n\ra=\la y^{(n,t)}\ra$ of order $t/(n,t)$. Then $Z_1\cap Z_0=\la y^u\ra$ where $u=\lcm\{s,(n,t)\}=\frac{s(n,t)}{(s,(n,t))}=\frac{s(n,t)}{(n,s)}$ since $s|t$.
The map $\varphi:y^j\rightarrow y^{jn}$ is a homomorphism from $Z$ to $Z$ with image $Z_1$ and kernel $\la y^{\frac{t}{(n,t)}}\ra$ of order $(n,t)$. Thus each element of $Z_1$ is the image of exactly $(n,t)$ elements of $Z$ under $\varphi$. Hence the number  we seek is the number of elements $y^j\in Z$ such that $\varphi(y^j)\in Z_0\cap Z_1$, and so is equal to $(n,t)| Z_0\cap Z_1|=(n,t)\frac{t}{u}=(n,t)t.\frac{(n,s)}{s(n,t)}=\frac{t(n,s)}{s}$.
\qed\medskip

\begin{prop}
Let $G$ be an imprimitive permutation group on $\Omega$ with system of imprimitivity $\calB$ such that  $G\cong G^\calB$ is as in (h) of Theorem $\ref{2transgroupsAS}$ for $a\geq 3$.
Then $G$ satisfies Hypothesis $\ref{mainhypo}$ if and only if
 $G$, $n$ and $|B|$ are as in one of Lines $10$ to $20$ of Table $\ref{bigtable2}$, with column $5$ giving additional conditions on $G$ or $|B|$. 
Moreover, $G$ has rank $3$ on $\Omega$ if and only if one of the following  holds :
\begin{enumerate}
\item $G$ satisfies the conditions on Line $12$ of  Table $\ref{bigtable2}$ and $(md,a)=d$.
 \item  $G=\PGL(3,4)$ or $\PGammaL(3,4)$, $|B|=6$, $G_B^B\cong \PSL(2,5)$ or $\PGL(2,5)$ respectively.
 \item  $G=\PSL(3,5)$, $|B|=5$, $G_B^B\cong S_5$.
\item  $G=\PSL(5,2)$, $|B|=8$,  $G_B^B\cong A_8$.
\item  $G=\PGammaL(3,8)$, $|B|=28$,  $G_B^B\cong \Ree(3)$.
\item $G=\PSL(3,2)$, $|B|=2$,  $G_B^B\cong C_2$.
\item $G=\PSL(3,3)$, $|B|=3$ , $G_B^B\cong S_3$.
\end{enumerate}
\end{prop}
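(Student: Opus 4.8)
The plan is to work throughout with the model fixed in Remark~\ref{PSLadescr}, writing $G_B=Q\rtimes \bar L$, where $Q$ is the elementary abelian unipotent radical of order $q^{a-1}$ (the translation subgroup of the affine action) and $\bar L:=G_B/Q$ lies between $\SL(a-1,q)$ and $\GammaL(a-1,q)$, acting on $Q\cong\GF(q)^{a-1}$ naturally. Identifying $\calB$ with the points of $PG(a-1,q)$ and $B$ with $\langle e_1\rangle$, every $2$-transitive action of $G_B$ is either faithful or has $Q$ in its kernel. A faithful one must be the natural affine action of degree $q^{a-1}$ (Line~10); note that, unlike the Suzuki, Ree, $\PSU$ and $\PSL(2,q)$ cases, Lemma~\ref{GBBaffine} does \emph{not} exclude it here, because $Q$ is abelian and, since $\bar L$ fixes no nonzero vector, self-centralising in $G_B$. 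Every unfaithful $2$-transitive action factors through $\bar L$, whose actions I read off from Theorem~\ref{2transgroupsAS} applied to the socle $\PSL(a-1,q)$: the projective action of degree $\frac{q^{a-1}-1}{q-1}$ (Line~13); the actions existing only because $\bar L$ has a solvable quotient, namely degree $2$ (Line~11) and degree a prime $m$ with quotient $\AGL(1,m)$ (Line~12), under the arithmetic constraints in column~5; and finitely many exceptional actions from exceptional isomorphisms of $\PSL(a-1,q)$ (such as $\PSL(2,4)\cong A_5$ on $6$ points, $\SL(4,2)\cong A_8$ on $8$ points, $\PGammaL(2,8)\cong\Ree(3)$ on $28$ points, $\SL(3,2)\cong\PSL(2,7)$ on $8$ points), giving Lines~14--20. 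This yields exactly Lines~10--20 and settles the first assertion.

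For the rank-$3$ analysis I apply Lemma~\ref{properties}. The two ``large'' actions die by failure of property~(B): in Line~10 a point $v\in B$ corresponds to a complement $W$ to $\langle e_1\rangle$ and $G_v$ stabilises the decomposition $\langle e_1\rangle\oplus W$, while in Line~13 it corresponds to a plane $\Pi\supset\langle e_1\rangle$ and $G_v$ stabilises $\Pi$; in either case, since $a\geq 3$, the lines of $\calB\setminus\{B\}$ split into those inside and those outside the stabilised subspace, so $G_v$ has at least two orbits. Line~18 ($\PSL(4,2)$, $|B|=8$) fails the divisibility property~(C): $|G_{B,B'}|$ has $2$-part only $2^{5}$, too small for $8^2=2^6$ to divide it, which is exactly what separates it from the superficially similar Line~16 (where the $2$-part is larger). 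For Line~11 ($|B|=2$) the argument parallels Proposition~\ref{PSL2}: rank~$3$ would force $G_{(B),B'}\neq (G_{(B),B'})^g$ for the element $g$ interchanging $B$ and $B'$; but the index-$2$ subgroup $G_{(B)}$ here arises from $|G/(G\cap\PGL(a,q))|$ being even through an element fixed by $g$ modulo $G_{(B)}$, so that $G_{(B),B'}=(G_{(B),B'})^g$ and property~(A) fails. The decisive point is that the $\tau$-twist producing genuine examples when $a=2$ does not occur for $a\geq 3$, and I will need to check this symmetry carefully.

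The substantive positive case is Line~12. Property~(B) holds at once: since $G_B^B\cong\AGL(1,m)$ is solvable while $\bar L\supseteq\SL(a-1,q)$ is perfect, the kernel $G_{(B)}$ contains the socle parabolic $T_B=G_B\cap T$, which is transitive on $\calB\setminus\{B\}$ because $T=\PSL(a,q)$ is $2$-transitive; hence $G_v\supseteq G_{(B)}\supseteq T_B$ is transitive. Moreover, unlike the $a=2$ and unitary/Suzuki/Ree situations, property~(D) no longer fails, since the larger two-block stabiliser $G_{B,B'}$ does admit two non-isomorphic transitive actions of degree $m$. The real work is therefore property~(A), the transitivity of $G_{B,B'}$ on $B\times B'$. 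Using a block-swapping element $g\in T$ and counting, via Lemma~\ref{PSL-technical}, the number of elements of the cyclic group $\langle\tau\rangle$ whose image under the relevant power map lands in the subgroup defining $G_{(B)}$, one finds that $G_{(B),B'}$ is transitive on $B'$ precisely when $(md,a)=d$. I expect this step --- translating transitivity of $G_{B,B'}$ on $B\times B'$ into the divisibility counted by Lemma~\ref{PSL-technical} and extracting the clean condition $(md,a)=d$ --- to be the main obstacle of the whole proposition.

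Finally, for the sporadic Lines~14--17, 19 and 20 I verify properties~(A) and~(B) directly. Here the socle parabolic is no longer contained in $G_{(B)}$, so (B) must be re-examined case by case, and property~(A) is confirmed by explicit computation using the Atlas~\cite{atlas} and Magma~\cite{magma}, exactly as in the earlier propositions. These checks produce the stated rank-$3$ examples --- $\PGL(3,4)$ and $\PGammaL(3,4)$ on blocks of size $6$ (with $G_B^B\cong\PSL(2,5)$, $\PGL(2,5)$ respectively), $\PSL(3,5)$ on size $5$, $\PSL(5,2)$ on size $8$, $\PGammaL(3,8)$ on size $28$, $\PSL(3,2)$ on size $2$, and $\PSL(3,3)$ on size $3$ --- and confirm that none of the remaining lines contributes, completing the equivalence.
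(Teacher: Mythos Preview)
Your plan matches the paper's: classify the $2$-transitive actions of $G_B$ via its normal structure, then test each against Lemma~\ref{properties}; the heart of the argument (Line~12 via Lemma~\ref{PSL-technical}) is identical. You diverge only in which property kills the negative cases. For Lines~10 and~13 you use property~(B), observing that $G_v$ stabilises a proper subspace of $\GF(q)^a$ properly containing $\langle e_1\rangle$ and hence cannot be transitive on $\calB\setminus\{B\}$; the paper instead shows property~(A) fails by computing $|G_{B,B'}:G_{B,B',v}|$ directly. (For Line~13 with $a\geq 4$ there is also the dual hyperplane action, where $G_v$ stabilises a hyperplane through $\langle e_1\rangle$ rather than a plane; your argument carries over verbatim.) For Line~11 you argue property~(A) via the symmetry $\tau^g=\tau$, $\sigma^g=\sigma$ (valid because $g$ does not touch $e_a$ when $a\geq 3$); the paper instead notes that $G_{(B)}=G_B\cap\langle H,(\sigma^i\tau^r\bfZ)^2\rangle$ with $\langle H,(\sigma^i\tau^r\bfZ)^2\rangle\unlhd G$, and invokes property~(F). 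All of these alternatives work, and your geometric route for Lines~10 and~13 is arguably cleaner than the paper's index computations.

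Two slips to correct. In Line~12 you assert $G_{(B)}\supseteq T_B$; this is false in general (take $G=\PSL(3,7)$, $m=2$: then $\tau^3\bfZ\in T_B\setminus G_{(B)}$, since $T_B/(MN)$ is cyclic of order $(q-1)/(a,q-1)$ and can map onto $C_m\leq\AGL(1,m)$). Perfectness of $\SL(a-1,q)$ gives only $N\leqslant G_{(B)}$, whence $MN\leqslant G_{(B)}$; but $MN$ is already transitive on $\calB\setminus\{B\}$, which is exactly what the paper uses, so property~(B) still holds. Second, property~(A) requires $G_{B,B',v}$ (not $G_{(B),B'}$) to be transitive on $B'$; these subgroups differ for $m\geq 3$, and it is for $G_{B,B',v}$ that the paper's computation $|H_{B,B',v}:H_{B,B',v,v^g}|=md/(md,a)$ yields the condition $(md,a)=d$. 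Neither slip undermines the plan.
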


\proof
Let $T=\PSL(a,q)\leq G\leq \PGammaL(a,q)=A$. We use the notation of Remark \ref{PSLadescr}.  
Let  $B$ and $B'$, identified respectively with $\la e_1\ra$ and $\la e_2\ra$, be two blocks of $\calB$.
Let $$h_{x,\underline{c},\underline{d},Y}=
\begin{pmatrix}
1 &  0& \underline{0}^T \\
0&x&\underline{0}^T\\
\underline{c}&\underline{d}  &Y \\
 \end{pmatrix},$$
 where $x\in \GF(q)^*$, $\underline{c},\underline{d},\underline{0}\in \GF(q)^{(a-2)\times 1}$ and $Y\in \GL(a-2,q)$.
Then  $H_{B,B'}$ consists of the elements  $h_{x,\underline{c},\underline{d},Y}\bfZ$ that lie in $H$, that is, those for which $x\,\Det(Y)\in F$. The subgroup $\la \sigma^i \tau^r\bfZ\ra$ is in $G_{B,B'}$, so  $G_{B,B'}= \la H_{B,B'},\sigma^i \tau^r\bfZ\ra$.  Let  $g\in \SL(a,q)$ such that $g_{12}=1, g_{21}=-1, g_{jj}=1$ for $j\geq 3$ and all other $g_{jk}=0$.
Then $g\bfZ$ is an element of $H$ interchanging $B$ and $B'$.

The subgroup $M=\{ g_{\underline{b},I}\bfZ|\underline{b}\in\GF(q)^{(a-1)\times 1}\}$ is the unique minimal normal subgroup of the stabiliser $G_B$ .
Suppose that $G_B$ acts faithfully on $B$. The only 2-transitive action is affine, with $|B|=|M|=q^{a-1}$, as in Line 10 of Table \ref{bigtable2}, and for some $v\in B$, $G_v=\la S,\tau^d\bfZ,\sigma^i \tau^r\bfZ\ra$ where $S=\{g_{\underline{0},X}\bfZ|\Det(X)=1\}$. Notice that $\la S,\tau^d\bfZ\ra=\{g_{\underline{0},X}\bfZ|\Det(X)\in F\}$. From the description of $H_{B,B'}$, it follows that the index of $G_{B,B',v}$ in $G_{B,B'}$ is $q^{a-2}$, and so  $G_{B,B'}$ is not transitive on $B$, contradicting property (A). So this $G$-action does not have rank 3.

Suppose now that $G_B$ does not act faithfully on $B$, and so $M\leqslant G_{(B)}$. Notice that $G_B/M$ is isomorphic to a subgroup of $\GammaL(a-1,q)$.
Let $N=\la S\ra\cong \SL(a-1,q)$, where $S$ is as above.
 Then $MN\lhd G_B$. 

\underline{Suppose first that  $MN\leqslant G_{(B)}$.} \\Then $G_B^B$ is isomorphic to a quotient of $\la \tau^d\bfZ, \sigma^i \tau^r\bfZ\ra\leqslant
\la \tau\bfZ, \sigma^i\ra\cong C_{q-1}\rtimes C_{e/i}$. Hence either $|B|=2$, or $|B|$ is a prime dividing $(q-1)/d$.

Case (1): Here $|B|=2$ and $G_{B,v}=G_{(B)}=\la H_B, (\sigma^i \tau^r\bfZ)^2\ra$, where $G/H$ has even order, as in Line $11$ of Table \ref{bigtable2}. Then $G_v=G_B\cap \la H,(\sigma^i \tau^r\bfZ)^2\ra$ and $\la H,(\sigma^i \tau^r\bfZ)^2\ra$ is a normal subgroup of $G$. Thus property (F) fails, so this action does not have rank $3$.

Case (2): Here $|B|=m$ is a prime dividing $(q-1)/d$ and $G_{B,v}=\la M,N ,\tau^{dm}\bfZ,\sigma^i \tau^{r+\lambda d}\bfZ \ra$ for some $\lambda\in [0,m-1]$.  
In other words $H_{B,v}$ consists of the elements $g_{\underline{b},X}\bfZ$ such that $\Det(X)\in \la \omega^{dm}\ra$. 
In order for $G_{B,v}$ to be of index $m$ in $G_B$, it is necessary that  $(\sigma^i \tau^{r+\lambda d}\bfZ )^{e/i}\in \la \tau^{dm}\bfZ \ra$, that is $dm\mid (r+\lambda d)\frac{q-1}{p^i-1}$. Moreover, in order that $G^B_B$ be 2-transitive, it is also needed that the order of $p^i$ modulo $m$ is $m-1$ (for $m=2$ that just means that $p$ is odd). Thus Line 12 of Table \ref{bigtable2} holds.

Since $G_v$ contains $MN$, $G_v$ induces a transitive action on ${\mathcal B}\setminus\{B\}$, so property (B) is satisfied.
Since $\sigma^i \tau^{r+\lambda d}\bfZ$ fixes $B$, $B'$, $v$ and also $v^g$ (note that $\tau^g=\tau$ and $\sigma^g=\sigma$), we have that  $|G_{B,B'}:G_{B,B',v}|=|H_{B,B'}:H_{B,B',v}|$ and $|G_{B,B',v}:G_{B,B',v,v^g}|=|H_{B,B',v}:H_{B,B',v,v^g}|$. We have $H_{B,B',v}=\{h_{x,\underline{c},\underline{d},Y}\bfZ|x\,\Det(Y)\in \la \omega^{md}\ra\}$, which has index $m$ in $H_{B,B'}$, and so $G_{B,B'}$ is transitive on $B$. Therefore property (A) will be satisfied if and only if $G_{B,B',v}$ is transitive on $B'$, that is if and only if $G_{B,B',v,v^g}$ has index $m$ in $G_{B,B',v}$. We  now compute this index, which we have seen is equal to $|H_{B,B',v}:H_{B,B',v,v^g}|$. 
We have $H_{B,B',v}=\{h_{x,\underline{c},\underline{d},Y}\bfZ|x\,\Det(Y)\in \la \omega^{md}\ra\}$. Also $H_{B,B',v^g}=H_{B,B',v}^g=\{h_{1/x,(-1/x)\underline{d},(1/x)\underline{c},(1/x) Y}\bfZ|x\,\Det(Y)\in \la \omega^{md}\ra\}=\{h_{x',\underline{c'},\underline{d'},Y'}\bfZ|x'^{(1-a)}\,\Det(Y')\in \la \omega^{md}\ra\}$. Therefore $H_{B,B',v,v^g}=\{h_{x,\underline{c},\underline{d},Y}\bfZ|x\,\Det(Y),x^a\in \la \omega^{md}\ra\}$.
For a chosen $x$, the number of matrices $Y$ such that $x\,\Det(Y)\in \la \omega^{md}\ra$ does not depend on the choice of $x$. Say that number is $k$. The number of choices for $\underline{c},\underline{d}$ also does not depend on $x$. Hence $|H_{B,B',v}|=(q-1)q^{2(a-2)}k$. For $H_{B,B',v,v^g}$ the only allowable choices for $x$ are the ones whose $a^\text{th}$ power is in $\la \omega^{md}\ra$. By Lemma \ref{PSL-technical}, the number of $x\in\la\omega\ra$ such that $x^a\in\la\omega^{md}\ra$ is $J=\frac{(q-1)(md,a)}{md}$. Thus  $|H_{B,B',v,v^g}|=Jq^{2(a-2)}k$, and  $|H_{B,B',v}:H_{B,B',v,v^g}|=(q-1)/J=\frac{md}{(md,a)}$. Therefore property (A) is satisfied if and only if $(md,a)=d$. Thus, by Lemma \ref{properties}, this $G$-action has rank $3$ if and only if $(md,a)=d$, as in (1) of the statement.

\underline{We now assume that  $N\not\leqslant G_{(B)}$.}\\
Let $C=\{g_{\underline{0},\nu I}\bfZ|\nu^{a-1}\in F\}$, where $F=\la\omega^d\ra$ as in Remark \ref{PSLadescr}. By  Lemma \ref{PSL-technical}, the number of $\nu\in\la \omega\ra$  such that $\nu^{a-1}\in \la \omega^{d}\ra$ is  $\frac{(q-1)(a-1,d)}{d}$. So $C\cong C_{\frac{q-1}{d}(a-1,d)}$.
We claim that $C\leqslant G_{(B)}$.

Suppose to the contrary that $C^B\neq 1$. Then $C^B$ is a cyclic normal subgroup of the $2$-transitive group $G_B^B$, and hence $|C^B|$ is a prime $p'$ and $G_B^B\leqslant \AGL(1,p')$. Moreover $C^B$ is self-centralising in $G_B^B$ by Lemma \ref{GBBaffine}. However $N\cong \SL(a-1,q)$ centralizes $C$ and by assumption $N^B\neq 1$, so $N^B\leqslant C^B$ is cyclic. Thus $\SL(a-1,q)$ has a nontrivial cyclic quotient and hence $a=3$ and $q\leq 3$. Since $p'$ divides $q-1$, it follows that $q=3$ and $p'=2$. However the only cyclic quotient of $N\cong \SL(2,3)=Q_8.C_3$ has order $3$, not $2$. This contradiction proves the claim.

Hence $C\leqslant G_{(B)}$, and so $MC \lhd G_{(B)}$. We have $\PSL(a-1,q)\leqslant G_B/(MC) \leqslant\PGammaL(a-1,q)$, and  $G_B^B=G_B/G_{(B)}$ is isomorphic to $(G_B/(MC))/(G_{(B)}/(MC))$. 
If $(a-1,q)\neq (2,2)$ or $(2,3)$, then every nontrivial normal subgroup of $G_B/(MC)$ must contain $\PSL(a-1,q)$. However, $(G_{(B)}/(MC))$ cannot contain $\PSL(a-1,q)$, as $G_{(B)}$ does not contain $N$. Therefore  $G_{(B)}=MC$ or  $(a-1,q)=(2,2)$ or $(2,3)$.

Assume first that $G_{(B)}=MC$. Then $\PSL(a-1,q)\leqslant G_B^B\leqslant\PGammaL(a-1,q)$.
Hence $G_B^B$ has one natural 2-transitive action of degree $(q^{a-1}-1)/(q-1)$ if $a=3$ and two such actions otherwise (the actions on points and hyperplanes of the projective space), as on Line 13 of Table \ref{bigtable2}.
For the action on points, we have $H_{B,v}=\{ g_{\underline{b},X}\bfZ|\Det(X)\in F, X_{1,i}=0 \text{ for all }i>1\}$ for some $v\in B$, and so $G_{B,v}=\la H_{B,v}, \sigma^i\tau^r\bfZ \ra$  contains $G_{B,B'}$. Thus property (A) fails and this $G$-action does not have rank 3. 
If $a\geq 4$, there is also the action on hyperplanes, for which  $H_{B,v}=\{ g_{\underline{b},X}|\Det(X)\in F, X_{i,1}=0 \text{ for all }i>1\}$ for some $v\in B$. Hence  $H_{B,B',v}=\{h_{x,\underline{c},\underline{0},Y}\bfZ|x\,\Det(Y)\in F\}$. Since  $\sigma^i\tau^r\bfZ$ is in $G_{B,B',v}$ as well as in $G_{B,B'}$, we have $|G_{B,B'}:G_{B,B',v}|=|H_{B,B'}:H_{B,B',v}|=q^{a-2}\neq (q^{a-1}-1)/(q-1)$, and so $G_{B,B'}$ does not act transitively on $B$, and property (A) fails.  Thus this $G$-action does not have rank $3$ either.

Now we look at the other possible 2-transitive actions of $G_B^B$ having simple socle $\PSL(a-1,q)$. 
By Theorem \ref{2transgroupsAS}, these occur for $(a,q,|B|)=(3,4,6)$, $(3,5,5)$,  $(5,2,8)$
, $(3,8,28)$, $(4,2,8)$, $(3,7,7)$, $(3,9,6)$, and $(3,11,11)$. We consider each of these below.  For simplifying the proof we sometimes cie an easily repeatable computation with Magma \cite{magma}.
\begin{enumerate}
\item $T=\PSL(3,4)$. Here $H=T$ or $H=\PGL(3,4)$. In both cases $H_B/(MC)\cong \PSL(2,4)\cong \PSL(2,5)$, which has a $2$-transitive action on $6$ points, as on Line 14 of Table \ref{bigtable2}. If $H=T$, then $|G_{B,B'}|=48$ or $96$ which is not divisible by $36$, so property (C) fails. If $H=\PGL(3,4)$, that is, if $G=\PGL(3,4)$ or $\PGammaL(3,4)$, then Magma confirms we have a rank $3$ example, as in (2) of the statement, and $G_B^B\cong \PSL(2,5)$ or $\PGL(2,5)$ respectively.\\
\item $T=\PSL(3,5)$. Since there is no field automorphism and $(a,q-1)=1$, we have $G=\PSL(3,5)$. Then $G_B/(MC)\cong \PGL(2,5)\cong S_5$, which has a $2$-transitive action on $5$ points, as on Line 15 of Table \ref{bigtable2}. Magma confirms this gives a rank $3$ example, as in (3) of the statement. \\
\item $T=\PSL(5,2)$. Since there is no  field automorphism and $(a,q-1)=1$, we have $G=\PSL(5,2)$. Then $G_B/(MC)\cong \PGL(4,2)\cong A_8$, which has a $2$-transitive action on $8$ points, as on Line 16 of Table \ref{bigtable2}. Magma confirms this gives a rank $3$ example, as in (4) of the statement. \\
\item  $T=\PSL(3,8)$. Since $(a,q-1)=1$, we have $d=1$,  $G_B/(MC)\geqslant H_B/(MC)\cong \PGL(2,8)$. If $G=\PGammaL(3,8)$, then $G_B/(MC)\cong \PGammaL(2,8)\cong \Ree(3)$, which has a $2$-transitive action on $28$ points, as on Line 17 of Table \ref{bigtable2}. Magma confirms this gives a rank $3$ example, as in (5) of the statement. \\
\item $T=\PSL(4,2)$. Since there is no field automorphism and $(a,q-1)=1$, we have $G=\PSL(4,2)$. Then $G_B/(MC)\cong \PGL(3,2)\cong \PSL(2,7)$, which has a $2$-transitive action on $8$ points, as in Line 18 of Table \ref{bigtable2}. However, $|G_{B,B'}|=96$ which is not divisible by $64$, hence property (C) fails. Thus this $G$-action does not have rank $3$.\\
\item $T=\PSL(3,7)$, $G=T$ or $G=\PGL(3,7)$. In both cases $H_B/(MC)\cong \PGL(2,7)$ which does not have a $2$-transitive action on $7$ points, since the isomorphism between $\PSL(2,7)$ and $\PSL(3,2)$ does not extend to $\PGL(2,7)$.\\
\item $T=\PSL(3,9)$, $H=T$. Then $G_B/(MC)\geqslant H_B/(MC)\cong \PGL(2,9)$ which does not have a $2$-transitive action on $6$ points, since the isomorphism between $\PSL(2,9)$ and $A_6$ does not extend to $\PGL(2,9)$.\\
\item $T=\PSL(3,11)$, $G=T$. Then $G_B/(MC)\cong \PGL(2,11)$ which does not have a $2$-transitive action on $11$ points, since the action of $\PSL(2,11)$ on $11$ points does not extend to $\PGL(2,11)$.\\
\end{enumerate} 

Finally assume that $(a-1,q)=(2,2)$ or $(2,3)$ and $MC < G_{(B)}$. We recall that we are also assuming $N\not\leqslant G_{(B)}$.
Consider first $G=\PSL(3,2)$. Then $C$ is trivial and $G_B=M\rtimes N\cong 2^2\rtimes S_3$. Hence $G_{(B)}= M \rtimes D$ where $D\leqslant N$ is isomorphic to $C_3$. Then $|B|=2$, as on Line 19 of Table \ref{bigtable2}. 
Magma confirms this gives a rank $3$ example, as in (6) of the statement.
Consider now $G=\PSL(3,3)$.  Then $C\cong C_2$. As $MN$ is the unique maximal subgroup of $G_B$, $MC< G_{(B)}< MN\cong 3^2\rtimes \SL(2,3)$. Hence $G_{(B)}=M\rtimes D$ where $D\leqslant N$ is isomorphic to $Q_8$. Then $G_B^B\cong S_3$ (since $N^B\neq 1$) and this has a $2$-transitive action on $3$ points, as on Line 20 of Table \ref{bigtable2}. 
We have that $G_{(B)}$ is transitive on the 12 other blocks (property (B)). Moreover $|G_{(B),B'}|=6$  and $|G_{(B),(B')}|=1$, and so $G_{(B),B'}^{B'}\cong S_3$, and (A) is satisfied. Hence this $G$-action has rank 3, as in (7) of the statement. This completes the proof.\qed\medskip

\end{document}